\newtheorem{remark}{Remark}
\newcommand{\cT}{\mathcal{T}}
\renewcommand{\div}{\mbox{\rm div\,}}
\newcommand{\mE}{\mathbb{E}}
\newcommand{\cF}{\mathcal{F}}
\newcommand{\mP}{\mathbb{P}}
\newcommand{\R}{\mathbf{R}}
\newcommand{\Ome}{\Omega}
\newcommand{\nab}{\nabla}
\newcommand{\bu}{\mathbf{u}}
\newcommand{\bP}{\mathbf{P}}
\newcommand{\bv}{\mathbf{v}}
\newcommand{\vf}{\mathbf{f}}
\newcommand{\be}{\mathbf{e}}
\newcommand{\beu}{\mathbf{e}_{\bu}}
\newcommand{\peta}{\pmb{\eta}}
\begin{document}

\title{Optimally Convergent Mixed Finite Element Methods for the Stochastic Stokes Equations}
\markboth{X. FENG, A. PROHL, AND L. VO}{MIXED AND STABILIZED METHODS FOR STOCHASTIC STOKES EQUATIONS}

\author{
Xiaobing Feng\thanks{Department of Mathematics, The University of Tennessee,
Knoxville, TN 37996, U.S.A. ({\tt xfeng@math.utk.edu})
The work of the this author was partially supported by
the NSF grant DMS-1620168.}
\and
Andreas Prohl\thanks{Mathematisches Institut, Universit\"at T\"ubingen, Auf der
	Morgenstelle 10, D-72076, T\"ubingen, Germany ({\tt prohl@na.uni-tuebingen.de}). }
\and
Liet Vo\thanks{Department of Mathematics, The University of Tennessee, Knoxville, TN 37996, U.S.A. ({\tt lvo6@vols.utk.edu})
The work of the this author was partially supported by the NSF grant DMS-1620168.}
}

\maketitle


\begin{abstract}
We propose some new mixed finite element methods for the time dependent stochastic 
Stokes equations with multiplicative noise, which use the Helmholtz decomposition
of the driving multiplicative noise. It is known \cite{LRS03} that  
	the pressure solution has a low regularity,
which manifests in sub-optimal convergence rates for well-known {\em inf-sup} 
stable mixed finite element methods in numerical simulations, see \cite{Feng}. 
We show that eliminating this gradient part from the noise in the numerical scheme 
leads to optimally convergent mixed finite element methods, and that this conceptional 
idea may be used to retool numerical methods that are well-known in the deterministic setting, 
including pressure stabilization methods, so that their optimal convergence 
properties can still be maintained in the stochastic setting. Computational experiments are also 
provided to validate the 
theoretical results and to illustrate the conceptional usefulness of the proposed numerical approach.
 
\end{abstract}

\begin{keywords}
Stochastic Stokes equations, multiplicative noise, Wiener process, It\^o stochastic integral,
mixed finite element methods, inf-sup condition, error estimates, Helmholtz decomposition, pressure stabilization
\end{keywords}

\begin{AMS}
65N12, 
65N15, 
65N30, 
\end{AMS}


\section{Introduction}\label{sec-1}
 
This paper is concerned with fully discrete mixed finite element approximations of the following 
time-dependent stochastic Stokes equations with multiplicative noise for viscous incompressible 
fluids  covering the domain $D=(0,L)^d$ for $d=2,3$:
\begin{subequations}\label{eq1.1}
	\begin{alignat}{2} \label{eq1.1a}
	d{\bf u} &=\bigl[ \Delta {\bf u} -\nabla p + \vf\bigr] dt +{\bf B}({\bf u}) dW(t)  &&\qquad\mbox{ in}\, D_T:=(0,T)\times D,\\
	\div {\bf u} &=0 &&\qquad\mbox{ in}\, D_T,\label{eq1.1b}\\
	{\bf u}(0)&= {\bf u}_0 &&\qquad\mbox{ in}\, D,\label{eq1.1d}
	\end{alignat}
\end{subequations}
where ${\bf u}$ and $p$, respectively, denote the velocity field and the pressure of the fluid
which are spatially periodic with period $L>0$ in each coordinate direction. $\bu_0$ and 
$\mathbf{f}$ denote respectively the prescribed initial velocity and body force  which
are spatially periodic (see section \ref{sec-2} for the details).  For the sake of simplicity and ease of presentation,
 we assume $\{W(t); t\geq 0\}$ to be an ${\mathbb R}$-valued Wiener process; 
 see section \ref{sec-2} for further details.

When ${\bf B} \equiv {\bf 0}$, \eqref{eq1.1} is the well-known (deterministic) Stokes system;  
one motivation for studying \eqref{eq1.1a}--\eqref{eq1.1b} with ``random" force'' $\mathbf{f}+{\bf B}({\bf u})\frac{dW}{dt}$ is to develop mathematical models of this type for 
turbulent fluids \cite{Bensoussan95,HM06}. In addition to their importance in applied sciences and engineering, the Stokes equations are a  well-known PDE model with
saddle point structure, which requires special numerical discretizations to construct
optimally convergent methods;   
it should be noted that although the involved deterministic Stokes operator is
linear, system \eqref{eq1.1a}--\eqref{eq1.1b} is nonlinear due to the nonlinear function {${\bf B}$.}

The numerical analysis of the deterministic Stokes problem
 is well-established in the literature, see \cite{Brezzi_Fortin91,Girault_Raviart86,Rannacher}. Well-known numerical methods include 
{\em exactly divergence-free methods}, which approximate the velocity in exactly divergence-free
finite element spaces; {\em mixed finite element methods}, where the (discrete) {\em inf-sup} condition 
is the key criterion that distinguishes stable pairings of finite element ansatz spaces for the velocity 
(with more degrees of freedom) and the pressure (with less degrees of freedom); 
mixed methods allow a more flexible, broader application if compared to {\em exactly divergence-free methods}, thus putting them in the center of research on numerical methods for 
saddle point problems in the last decades. Another class of related numerical methods
are {\em stabilization methods} which were initiated in
\cite{hughes}, where the incompressibility constraint (\ref{eq1.1b}) is relaxed into
\begin{alignat}{2}\label{stabilize}
{\rm div}\, {\bf u} - \varepsilon \Delta p &= 0 &&\qquad \mbox{in } D_T, 
\end{alignat}
This relaxation allows for stable pairings of equal order (nodal-based) finite 
element ansatz spaces for both, 
velocity and pressure (putting $\varepsilon = {\mathcal O}(h^2)$, where $h>0$ is the spatial 
mesh size). We remark that optimal order error estimates had been obtained for 
all three classes of finite element methods in the deterministic setting (cf.~\cite{Girault_Raviart86,Brezzi_Fortin91}), where 
\begin{itemize}
\item {\em inf-sup} stable mixed finite element methods require the $H^1$-regularity of the 
pressure in order to optimally bound the best-approximation error for the pressure, which leads to 
optimal order convergence; cf.~\cite{Brezzi_Fortin91,Girault_Raviart86,Rannacher},
\item stabilization methods require the $H^1$-regularity of the pressure for convergence; cf.~\cite{hughes,Prohl}.
\end{itemize}

\smallskip
This work contributes to the numerical analysis of the stochastic Stokes problem \eqref{eq1.1}
(i.e., ${\bf B} \neq {\bf 0}$). By \cite{LRS03}, even for smooth datum functions $\bu_0$ and $\mathbf{f}$, the (temporal) regularity of
the pressure $p\in L^1\bigl(\Omega; W^{-1,\infty}(0,T;H^1(D)/\mathbb{R} )\bigr)$ 
is limited  in general due to the driving noise. In order to motivate its impact onto the pressure,
we here discuss the related question regarding
{\em $k$-independent} stability estimates  for the pair of
random variables $({\bf u}^{n+1}, p^{n+1})$ of the following time-implicit discretization of (\ref{eq1.1}) 
on a uniform mesh of $[0,T]$ with the mesh size $k>0$:   
\begin{subequations}\label{eq_new_intro} 
	\begin{alignat}{2}\label{eq_new_intro_a} 
	 {\bf u}^{n+1} - k \Delta {\bf u}^{n+1} + k \nabla p^{n+1}   &=  {\bf u}^n + k\vf^{n+1} +  {\bf B}({\bf u}^n) \Delta_{n+1} W &&\quad\mbox{in } D\, , \\
	 \div {\bf u}^{n+1} &= 0 &&\quad\mbox{in } D\, ,
	 \end{alignat}
	\end{subequations}
where $\Delta_{n+1} W :=W(t_{n+1})-W(t_{n})\thicksim {\mathcal N}(0,k)=\sqrt{k}{\mathcal N}(0, 1)$ and  $\vf^{n+1} = \vf(t_{n+1},\cdot) \in L^2(\Ome, L^2_{per}(D;\mathbb{R}^d))$.
A crucial observation for the motivation of this paper is that the pressure gradient on the 
left-hand side is scaled by $k$, while the noise term is of order $O(\sqrt{k})$.
Let us assume that the estimate  (\ref{eq3.4}) in Lemma \ref{lem3.1} 
for $\{\Delta {\bf u}^n\}_{n}$ taking values in $L^2(D; {\mathbb R}^d)$ is already shown, 
and we now look for a uniform bound for $\{\nabla p^n\}_n$ taking values in $L^2(D; {\mathbb R}^d)$.
The strategy for deriving such a stability estimate is to fix one $\omega \in \Omega$, and to multiply (\ref{eq_new_intro}) with $\nabla p^{n+1}(\omega)$: all the terms that involve the velocity vanish due to the incompressibility property and the periodic boundary condition, and we end up with
\begin{align}\label{eq_new_intro_0}
\frac{k}{2} \Vert \nabla p^{n+1}(\omega)\Vert^2
&\leq k\|\vf^{n+1}\|^2 +\Bigl( {\bf B}\bigl({\bf u}^n(\omega) \bigr) \Delta_{n+1} W(\omega), \nabla p^{n+1}(\omega)\Bigr)\, .
\end{align}
Note that the term on the right-hand side does not vanish since $\div {\bf B}({\bf u}^n)\neq 0$ 
for a general (Lipschitz) nonlinear mapping ${\bf B}$.
We now take expectations ${\mathbb E}[\cdot]$ on both sides, sum over all time steps, and use  
(\ref{eq3.4}), the facts that ${\bf B}({\bf u}^n)$ and $\Delta_{n+1} W$ are independent and 
${\mathbb E}\bigl[\vert \Delta_{n+1} W\vert^2\bigr] \leq Ck$,
and Young's inequality (with $\alpha >0$) to obtain the estimate
\begin{align*}\nonumber
&\frac{k}{2} \sum_{n=0}^{N-1}{\mathbb E}\bigl[\Vert \nabla p^{n+1} \Vert^2\bigr]
\leq  k\sum_{n=0}^{N-1} \mE\big[\|\vf^{n+1}\|^2\big] 
+ \sum_{n=0}^{N-1} {\mathbb E}\Bigl[
\Bigl( {\bf B} ({\bf u}^n   ) \Delta_{n+1} W , \nabla p^{n+1} \Bigr)\Bigr] \\ 
&\hskip 0.6in \leq  k\sum_{n=0}^{N-1} \mE\big[\|\vf^{n+1}\|^2\big]  
 +  \alpha k \sum_{n=0}^{N-1} {\mathbb E}\bigl[ \Vert {\bf B} ({\bf u}^n )\Vert^2\bigr] +
\frac{1}{4\alpha} \sum_{n=0}^{N-1}{\mathbb E}\bigl[\Vert \nabla p^{n+1} \Vert^2\bigr]\, .
\end{align*}
Taking $\alpha = \frac{1}{k}$ allows to absorb the last term on the right-hand side 
to the one on the left, but the remaining term is $\sum_{n=0}^{N-1} {\mathbb E}\bigl[ \Vert {\bf B} ({\bf u}^n )\Vert^2\bigr] \propto {\mathcal O}(k^{-1})$, therefore, we end up with the following $k$-dependent estimate:
\begin{equation}\label{eq_new_intro_1}
\frac{k}{4} \sum_{n=0}^{N-1}{\mathbb E}\bigl[\Vert \nabla p^{n+1}\Vert^2\bigr] \leq \frac{C}{k} + k\sum_{n=0}^{N-1} \mE\big[\|\vf^{n+1}\|^2\big]\,.
\end{equation}

The above consideration crucially affects the error analysis of a space-time discretization of \eqref{eq1.1a}--\eqref{eq1.1b}:
\begin{itemize}
\item  {\em Exactly divergence-free methods} require restricted settings of data, including the dimension, topology, and regularity of the spatial domain $D$. However, an optimal order 
error estimate can be proved for the velocity approximation, see \cite{CP12}, which 
uses the fact that no pressure is involved in the analysis.
\item The error estimate for the velocity approximation of {\em inf-sup} stable mixed finite element 
methods in \cite{Feng} was obtained based on   
a stability bound of type (\ref{eq_new_intro_1}) 
to bound the related best-approximation error for the pressure that appears
in (an auxiliary temporal discretization of) \eqref{eq1.1}, thus
leading to a sub-optimal error estimate for the velocity of
order ${\mathcal O}(k^{\frac12} +  h {k}^{-\frac12})$. 
The computational studies in \cite{Feng} suggest that this error  bound is sharp.
\end{itemize}
 
\smallskip
The first goal of the paper is to construct
optimally convergent {\em inf-sup} stable mixed finite 
element methods, with ``minimum" extra effort. Our main idea, which is partly borrowed from
	  \cite{CHP12}, is to perform the Helmholtz decomposition for the noise term
at each time step first, and then to determine the new velocity  
and pressure iterates simultaneously via the mixed finite element method. 
Below we shall use the semi-discrete time-stepping scheme (\ref{eq_new_intro}) to  motivate our strategy. Introducing the Helmholtz decomposition of $\mathbf{B}$ as follows
\begin{equation}\label{helmh1}
{\bf B}({\bf u}^n) = \nabla \xi^{n} + \pmb{\eta}^{n}\, \qquad \mbox{where} \quad
\div \pmb{\eta}^n = 0\, ,
\end{equation}
and setting $r^{n+1} :=   p^{n+1} - k^{-1} \Delta_{n+1} W \xi^{n}$,  then 
(\ref{eq_new_intro}) can be rewritten as
\begin{subequations}\label{eq_new_intro_ref}
	\begin{alignat}{2}\label{eq_new_intro_ref_a}
	 {\bf u}^{n+1} - k \Delta {\bf u}^{n+1} + k \nabla r^{n+1}   &=  {\bf u}^n + k \vf^{n+1}   +  \pmb{\eta}^n \Delta_{n+1} W   &&\qquad\mbox{in } D\, ,  \\
	 \div {\bf u}^{n+1} &= 0 &&\qquad \mbox{in } D\, .
	\end{alignat}
\end{subequations}
In contrast to estimate (\ref{eq_new_intro_1}) for $p^{n+1}$, it can be shown that the new pressure 
$r^{n+1}$ satisfies the following improved stability estimate (see Lemma~\ref{lem3.1}):
\begin{equation}\label{eq_new_intro_ref1}
k \sum_{n=0}^{N-1}{\mathbb E}\bigl[\Vert \nabla r^{n+1}\Vert^2\bigr] 
\leq  k\sum_{n=0}^{N-1} \mE\big[\|\vf^{n+1}\|^2\big]\,,
\end{equation}
which is a consequence of the divergence-free property of the modified noise 
term ({\em i.e.}, the last term on the right-hand side of  (\ref{eq_new_intro_ref_a})). 
Conceptually, this improved stability for the new  pressure $r^{n+1}$ is 
obtained by removing the {\em stochastic pressure} $\xi^n$ from the driving noise 
in (\ref{eq_new_intro_a}). As it will be detailed in Section \ref{sec-4}, any {\em inf-sup} 
stable mixed finite element discretization of (\ref{eq_new_intro_ref}) then gives optimally 
convergent velocity approximations (see Theorem \ref{thm4.5}), whose proof essentially 
relies on (\ref{eq_new_intro_ref1}). We also present optimal error estimates for 
(temporal averages of) the 
pressure approximations in $L^2$, which improve corresponding suboptimal 
estimates  in \cite{Feng}.

We therefore conclude by saying that it is essential to identify the proper role of the 
semi-discrete pressures, namely, $\{p^n\}_n$ in (\ref{eq_new_intro}) vs.~$\{r^n\}$ in (\ref{eq_new_intro_ref}), for {\em inf-sup} stable mixed finite element methods
for \eqref{eq1.1} in order to construct optimally convergent mixed methods. Moreover, 
this insight  also suggests how to construct optimally convergent stabilization 
methods for \eqref{eq1.1} which circumvent the {\em inf-sup} stability criterion 
for mixed element methods, and hence allow a more efficient discretization such as
\begin{subequations}\label{eq_new_intro_ref_2}
	\begin{alignat}{2}\label{eq_new_intro_ref_2a}
	 {\bf u}_{\varepsilon}^{n+1} - k \Delta {\bf u}_{\varepsilon}^{n+1} + k \nabla r_{\varepsilon}^{n+1}  
	 &=  {\bf u}_{\varepsilon}^n + k \vf^{n+1}   +  \pmb{\eta}^n_\varepsilon \Delta_{n+1} W 
	 &&\qquad \mbox{in } D\, ,  \\
	 \div {\bf u}_{\varepsilon}^{n+1}- \varepsilon \Delta r^{n+1}_\varepsilon &= 0
	 &&\qquad\mbox{in } D\, , \label{eq_new_intro_ref_2b} 
	 \end{alignat}
\end{subequations}
for which $\varepsilon = {\mathcal O}(h^2)$ will be shown to be the optimal choice in section \ref{sec-5}.
The error analysis in section \ref{sec-5} verifies optimal order convergence 
for a standard finite element discretization of (\ref{eq_new_intro_ref_2}) which employs 
the same finite element space for approximating both, ${\bf u}_{\varepsilon}^{n+1}$ and $r^{n+1}_\varepsilon$; see~Theorem \ref{error-thm}. Corresponding computational 
studies in section \ref{sec-6} support the conclusion that the choice of pressure in the stabilization
is crucial for achieving an optimally convergent {\em stabilization method} for \eqref{eq1.1}.

The remainder of this paper is organized as follows. In section \ref{sec-2}, we 
give exact assumptions on the data in \eqref{eq1.1}, and
recall the definition and known properties of the (strong) variational 
solution for problem \eqref{eq1.1}.
In sections \ref{sec-3} and \ref{sec-4}, we analyze the {\em Helmholtz decomposition 
	enhanced} Euler-Maruyama time-stepping scheme \eqref{helmh1}--\eqref{eq_new_intro_ref} 
    and its mixed finite element approximations, and establish   
    the optimal convergence for both. Section \ref{sec-5} establishes 
    optimal convergence for the stabilized scheme \eqref{eq_new_intro_ref_2} and 
    its equal-order finite element approximations. 
 Two-dimensional numerical experiments and computational studies are given in section \ref{sec-6}   to validate the theoretical error bounds, and 
 to computationally evidence that a proper selection of the pressure for the construction
 of optimally convergent mixed methods is indeed necessary.

\section{Preliminaries}\label{sec-2}
\subsection{Notations}\label{sec-2.1}
Standard function and space notation will be adopted in this paper. 
For example, $H^\ell_{per}(D, {\mathbb R}^d)\, (\ell\geq 0)$ denotes the subspace of 
the Sobolev space $H^\ell(D, {\mathbb R}^d)$ consisting of ${\mathbb R}^d$-valued 
periodic functions with period $L$ in each spatial coordinate direction, and  $(\cdot,\cdot):=(\cdot,\cdot)_D$ denote the standard $L^2$-inner product, 
with induced norm $\Vert \cdot \Vert$.
Let $(\Omega,\cF, \{\cF_t\},\mP)$ be a filtered probability space with the 
probability measure $\mP$, the 
$\sigma$-algebra $\cF$ and the continuous  filtration $\{\cF_t\} \subset \cF$. 
For a random variable $v$ defined on $(\Omega,\cF, \{\cF_t\},\mP)$,
let ${\mathbb E}[v]$ denote the expected value of $v$. 
For a vector space $X$ with norm $\|\cdot\|_{X}$,  and $1 \leq p < \infty$, we define the Bochner space
$\bigl(L^p(\Omega,X); \|v\|_{L^p(\Omega,X)} \bigr)$, where
$\|v\|_{L^p(\Omega,X)}:=\bigl({\mathbb E} [ \Vert v \Vert_X^p]\bigr)^{\frac1p}$.
Throughout this paper, unless it is stated otherwise,
	 we shall use $C$ to denote a generic positive constant
	which may depend on $T$, the datum functions $\bu_0$ and $\vf$, and the domain $D$ 
	but is independent of the mesh parameter $h$ and $k$.
 
We also define 
\begin{align*}
{\mathbb H}&:= \bigl\{{\bf v}\in  L^2_{per}(D; {\mathbb R}^d) ;\,\div \bv=0 \mbox{ in }D \bigr\}\, , \\
{\mathbb V}&:=\bigl\{{\bf v}\in  H^1_{per}(D; {\mathbb R}^d) ;\,\div \bv=0 \mbox{ in }D \bigr\}\, .
\end{align*}

We recall from \cite{Girault_Raviart86} that the (orthogonal) Helmholtz projection 
${\bf P}_{{\mathbb H} }: L^2_{per}(D; {\mathbb R}^d)$ $\rightarrow {\mathbb H} $ is defined 
by ${\bf P}_{{\mathbb H}} {\bf v} = \pmb{\eta}$ for every ${\bf v} \in L^2_{per}(D; {\mathbb R}^d)$, 
where $(\pmb{\eta}, \xi) \in {\mathbb H} \times H^1_{per}(D)/\mathbb{R}$ is a unique tuple such that 
$${\bf v} = \pmb{\eta} + \nabla \xi\, , $$
and   $\xi\in H^1_{per}(D)/\mathbb{R}$  solves the following Poisson problem 
(cf. \cite{Babutzka18}):
\begin{equation}\label{poisson}
	(\nabla \xi, \nabla q) = ({\bf v},\nabla q)  \qquad \forall\, q \in H^1_{per}(D)\, .
\end{equation}
In this paper we denote by ${\bf A} := {\bf P}_{{\mathbb H}} \Delta:  H^2(D; {\mathbb R}^d) \rightarrow {\mathbb H}$ the Stokes operator. 

We assume that ${\bf B}: L^2(\Ome;H^1_{per}(D; {\mathbb R}^d)) \rightarrow L^2(\Ome; H^1_{per}(D; {\mathbb R}^d))$ is Lipschitz continuous and has linear growth, {\em i.e.}, 
there exists a constant $C  > 0$ such that for all ${\bf v}, {\bf w} \in L^2_{per}(D; {\mathbb R}^d)$,
 \begin{subequations}\label{eq2.6}
	\begin{align}\label{eq2.6a}
	\|{\bf B}({\bf v})-{\bf B}({\bf w})\|  &\leq C \|{\bf v}-{\bf w}\|\, , \\
	\|{\bf B}({\bf v})\|  &\leq C  \bigl(1+ \|{\bf v}\| \bigr)\, ,   \label{eq2.6b}\\
	{\|\mathcal{D} \mathbf{B}\|_*} &\leq C \, ,\label{eq2.6c}
	\end{align}
where $\mathcal{D}\mathbf{B}$ denotes the Gateaux derivative of $\mathbf{B}$, and $\|\cdot\|_*$ is 
	its operator norm. 
\end{subequations}
  
\subsection{Variational formulation of the stochastic Stokes equations}\label{sec-2.2}
We first recall the solution concept for \eqref{eq1.1}, and refer to \cite{Chow07,PZ92} 
for its existence and uniqueness.
 
\begin{definition}\label{def2.1} 
	Given $(\Omega,\cF, \{\cF_t\},\mP)$, let $W$ be an ${\mathbb R}$-valued Wiener 
	process on it. 	Suppose ${\bf u}_0\in L^2(\Omega, {\mathbb V})$ and $\vf \in L^2(\Ome;L^2((0,T);L^2_{per}(D;\mathbb{R}^d)))$.
	An $\{\cF_t\}$-adapted stochastic process  $\{{\bf u}(t) ; 0\leq t\leq T\}$ is called
	a variational solution of \eqref{eq1.1} if ${\bf u} \in  L^2\bigl(\Omega; C([0,T]; {\mathbb V})) 
	\cap L^2\bigl(0,T;H^2_{per}(D;{\mathbb R}^d)\bigr)$,
and satisfies $\mP$-a.s.~for all $t\in (0,T]$
\begin{align}\label{eq2.8a}
		\bigl({\bf u}(t),  {\bf v} \bigr) + \int_0^t  \bigl(\nab {\bf u}(s), \nab {\bf v} \bigr) 
		\,  ds&=({\bf u}_0, {\bf v}) +\int_0^t\big(\vf(s),{\bf v}\big)\, ds \\\nonumber
		&\qquad +{\int_0^t  \Bigl( {\bf B}\bigl({\bf u}(s)\bigr), {\bf v} \Bigr)\, dW(s)}  \quad\forall  \, {\bf v}\in {\mathbb V}\, . 
\end{align}

\end{definition}

\smallskip
The following estimates from \cite{CHP12, Feng} establish the H\"older continuity 
in time of the variational solution in various spatial norms.
\begin{theorem}\label{thm2.2}
	Additionally suppose ${\bf u}_0 \in L^2\bigl(\Omega; {\mathbb V} \cap H^2_{per}(D; {\mathbb R}^d)\bigr)$ and $\vf \in L^2(\Ome,C^{\frac12}([0,T]);H^1_{per}(D;\mathbb{R}))$. There exist a  constant $C>0$, such that the variational solution to problem \eqref{eq1.1} satisfies
	 for $s,t \in [0,T]$
	\begin{subequations}
		\begin{align}\label{eq2.20a}
		& {\mathbb E}\bigl[\|{\bf u}(t)-{\bf u}(s)\|^2 \bigr]+ {\mathbb E}\Bigl[\int_s^t \|\nabla\bigl({\bf u}(\tau)-{\bf u}(s)\bigr)\| ^2 \, d\tau \Bigr]
		\leq C|t-s|\, ,\\ \label{eq2.20b}
		& {\mathbb E}\bigl[\|\nabla \bigl({\bf u}(t)-{\bf u}(s)\bigr)\|^2 \bigr]+ {\mathbb E}\Bigl[\int_s^t \|{\bf A}\bigl({\bf u}(\tau)-{\bf u}(s)\bigr)\|^2\, d\tau\Bigr] 
		\leq C|t-s|\, .
		\end{align}
	\end{subequations}
\end{theorem}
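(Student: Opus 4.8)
The plan is to establish the two H\"older-in-time estimates \eqref{eq2.20a}--\eqref{eq2.20b} by applying It\^o's formula to suitable functionals of the increment $\bu(t)-\bu(s)$ and then using the a priori regularity $\bu \in L^2(\Omega;C([0,T];\mathbb{V})) \cap L^2(0,T;H^2_{per})$ built into the solution concept, together with the linear growth and Lipschitz bounds \eqref{eq2.6a}--\eqref{eq2.6b} on $\mathbf{B}$. First I would record the integral identity satisfied by the increment: from Definition~\ref{def2.1}, for fixed $s$ and $t \ge s$,
\begin{equation*}
\bu(t)-\bu(s) = \int_s^t \mathbf{A}\bu(\tau)\,d\tau + \int_s^t \mathbf{P}_{\mathbb{H}}\vf(\tau)\,d\tau + \int_s^t \mathbf{P}_{\mathbb{H}}\mathbf{B}(\bu(\tau))\,dW(\tau),
\end{equation*}
where I have projected onto $\mathbb{H}$ to eliminate the pressure (this is legitimate since $\bu(\tau)\in H^2_{per}$ a.e.\ and the test functions in \eqref{eq2.8a} range over $\mathbb{V}$). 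Note the pressure never enters, which is exactly what makes the argument clean.

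For \eqref{eq2.20a}, the first estimate, I would apply It\^o's formula to $\|\bu(t)-\bu(s)\|^2$ as a function of $t$ (with $s$ frozen). This produces a drift term $-2\int_s^t (\nabla(\bu(\tau)-\bu(s)),\nabla\bu(\tau))\,d\tau$ from the Stokes operator, a term $2\int_s^t(\vf(\tau),\bu(\tau)-\bu(s))\,d\tau$, the It\^o correction $\int_s^t \|\mathbf{P}_{\mathbb{H}}\mathbf{B}(\bu(\tau))\|^2\,d\tau$, and a martingale term that vanishes under expectation. Rewriting $\nabla\bu(\tau) = \nabla(\bu(\tau)-\bu(s)) + \nabla\bu(s)$ in the drift term lets me peel off $-2\int_s^t\|\nabla(\bu(\tau)-\bu(s))\|^2\,d\tau$ (which I move to the left-hand side, producing the gradient term in \eqref{eq2.20a}) plus a cross term $-2\int_s^t(\nabla(\bu(\tau)-\bu(s)),\nabla\bu(s))\,d\tau$ that I absorb by Young's inequality. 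The remaining terms are bounded using $\mathbb{E}\int_0^T\|\nabla\bu(s)\|^2\,ds<\infty$, $\mathbb{E}\int_0^T\|\vf\|^2<\infty$, $\mathbb{E}\sup_{[0,T]}\|\bu\|^2<\infty$, and the linear growth bound $\|\mathbf{B}(\bu)\|\le C(1+\|\bu\|)$; each contributes a factor $|t-s|$. A Gronwall step in $t$ (if needed to close the $\|\bu(\tau)-\bu(s)\|^2$ appearing in the $\vf$ cross term) then yields \eqref{eq2.20a}.

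For \eqref{eq2.20b}, I would instead apply It\^o's formula to $\|\nabla(\bu(t)-\bu(s))\|^2 = \|A^{1/2}(\bu(t)-\bu(s))\|^2$, or equivalently test the increment equation against $-\mathbf{A}(\bu(t)-\bu(s))$ in the It\^o sense. Now the drift produces $-2\int_s^t\|\mathbf{A}\bu(\tau)\|^2\,d\tau$-type terms, from which I extract $-2\int_s^t\|\mathbf{A}(\bu(\tau)-\bu(s))\|^2\,d\tau$ (giving the second term on the left of \eqref{eq2.20b}) plus a cross term handled by Young's inequality using $\mathbb{E}\int_0^T\|\mathbf{A}\bu(s)\|^2\,ds = \mathbb{E}\int_0^T\|\bu(s)\|^2_{H^2}\,ds<\infty$, which holds by the regularity in Definition~\ref{def2.1}. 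The It\^o correction term is now $\int_s^t\|\nabla\mathbf{P}_{\mathbb{H}}\mathbf{B}(\bu(\tau))\|^2\,d\tau$; this is where I need the hypothesis that $\mathbf{B}$ maps into $H^1_{per}$ with the bound \eqref{eq2.6c} on $\mathcal{D}\mathbf{B}$ — combined with $\mathbb{E}\sup_{[0,T]}\|\nabla\bu\|^2<\infty$ this term is $O(|t-s|)$. The newly required hypotheses in the theorem statement ($\bu_0 \in H^2_{per}$ and $\vf \in C^{1/2}([0,T];H^1_{per})$) are exactly what make $\mathbb{E}\sup_{[0,T]}\|\nabla\bu\|^2$ and $\mathbb{E}\int_0^T\|\mathbf{A}\bu\|^2$ finite; I would either cite the corresponding regularity result from \cite{CHP12,Feng} or remark that it is obtained by a parallel (higher-order) energy estimate.

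The main obstacle I anticipate is purely bookkeeping: justifying the It\^o formula in the function-space setting (the increment equation lives in $\mathbb{H}$ with drift in $\mathbb{H}$ and a Hilbert-space-valued stochastic integral), and correctly handling the $\nabla\bu(s)$ and $\mathbf{A}\bu(s)$ cross terms so that the ``bad'' quadratic-in-increment pieces are genuinely absorbed into the left-hand side rather than needing an a priori bound on the increment itself. In practice this is standard once one knows $\bu$ has the stated regularity, so the real content is the a priori bounds $\mathbb{E}\sup_{[0,T]}\|\bu\|_{\mathbb{V}}^2<\infty$ and $\mathbb{E}\int_0^T\|\bu\|_{H^2}^2\,dt<\infty$ — which I will take as given from \cite{CHP12,Feng} — after which both estimates follow from the It\^o-formula-plus-Young-plus-Gronwall template above.
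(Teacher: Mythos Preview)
The paper does not actually prove Theorem~\ref{thm2.2}; it merely cites \cite{CHP12,Feng} for the result. Your It\^o-formula-plus-energy-method outline is precisely the standard argument one finds in those references, so your approach is correct and, in substance, matches the cited proofs.

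One small bookkeeping correction: in your treatment of the cross term for \eqref{eq2.20b}, you write that it is handled using $\mathbb{E}\int_0^T\|\mathbf{A}\bu(s)\|^2\,ds<\infty$, but since $s$ is \emph{frozen} in the increment $\bu(\tau)-\bu(s)$, what you actually need after Young's inequality is the pointwise-in-time bound $\sup_{s\in[0,T]}\mathbb{E}\bigl[\|\mathbf{A}\bu(s)\|^2\bigr]<\infty$. This is indeed what the additional hypotheses $\bu_0\in L^2(\Omega;\mathbb{V}\cap H^2_{per})$ and $\vf\in L^2(\Omega;C^{1/2}([0,T];H^1_{per}))$ provide, via the higher-order a~priori estimate you correctly propose to cite from \cite{CHP12,Feng}; just be sure to invoke the pointwise bound rather than the time-integrated one.
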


\begin{remark}
	To avoid the technicality of tracking the required ``minimum" 
	assumptions on $\bu_0$ and $\vf$ for each stability and/or error estimate, unless it
	is stated otherwise, we shall implicitly make the ``maximum" assumption ${\bf u}_0 \in L^2\bigl(\Omega; {\mathbb V} \cap H^2_{per}(D; {\mathbb R}^d)\bigr)$ and $\vf \in L^2(\Ome,C^{\frac12}([0,T]);H^1_{per}(D;\mathbb{R}))$ in the rest of the paper.
\end{remark}	

\subsection{Definition and role of the pressure}\label{sec-2.3}
The Definition \ref{def2.1} only addresses the velocity $\mathbf{u}$ in the stochastic PDE \eqref{eq1.1}; a corresponding pressure which satisfies a proper formulation (see Theorem \ref{thm 2.2} below) may be constructed after the existence of a velocity field ${\bf u}$ has been established.
We therefore consider processes
\[
\mathbf{U}(t):=\int_0^t \mathbf{u}(s)\, ds\,,\quad 
 \mathbf{F}(t):=\int_0^t \mathbf{f}(s)\, ds \quad  \mbox{and} \quad\mathbf{G}(t):= \int_0^t  {\bf B}\bigl({\bf u}(s))\, dW(s)\, .
\]
Evidently, $\mathbf{U} \in L^2\bigl(\Omega, L^2(0,T; H^2_{per}(D,\mathbb{R}^d))\bigr)$ and  
$\mathbf{G}\in L^2\bigl(\Omega, L^2(0,T; L^2_{per}(D,\mathbb{R}^d))\bigr)$, and
 \eqref{eq2.8a} therefore implies 
\begin{equation}\label{eq2.8c}
\bigl( \mathbf{u}(t)-  \Delta \mathbf{U}(t)-\mathbf{u}_0 - \mathbf{F}(t) -\mathbf{G}(t),   \mathbf{v} \bigr) =0
 \qquad\forall  \, {\bf v}\in {\mathbb V},\,  t\in (0,T), \, {\mathbb P}\mbox{-a.s.}
\end{equation} 
By the Helmholtz decomposition
 \cite[Theorem 4.1 and Remark 4.3]{LRS03},
there exists a unique  $P \in  {L^2\bigl(\Omega, L^2(0,T; H^1_{per}(D))/\mathbb{R}\bigr)}$  
such that  
\begin{equation}\label{eq2.8d}
\nabla P(t)= - \bigl[ \mathbf{u}(t)- \Delta \mathbf{U}(t)-\mathbf{u}_0   -\mathbf{F}(t) 
- \mathbf{G}(t) \bigr]
\qquad\forall  \, t\in (0,T),\, {\mathbb P}\mbox{-a.s.} 
\end{equation}                                                                             in the distributional sense. It is shown in \cite[Section 5]{LRS03}, that 
its distributional time derivative 
$p := \partial_t P \in L^1\bigl(\Omega; W^{-1,\infty}(0,T; H^1_{per}(D)/\mathbb{R})\bigr)$.
As a consequence, we have the following result.
                                                                                 
\begin{theorem}\label{thm 2.2}
	 Let $\{{\bf u}(t) ; 0\leq t\leq T\}$ be the variational solution of \eqref{eq1.1}. There exists a unique adapted process  
	 $P\in {L^2\bigl(\Omega, L^2(0,T; H^1_{per}(D)/\mathbb{R}\bigr)}$ such that $(\mathbf{u}, P)$ satisfies  $\mP$-a.s.~for all $t\in (0,T]$
\begin{subequations}\label{eq2.10}
	\begin{align}\label{eq2.10a}
	&\bigl({\bf u}(t),  {\bf v} \bigr) + \int_0^t  \bigl(\nab {\bf u}(s), \nab {\bf v} \bigr) \, ds - \bigl(  \div \mathbf{v}, P(t) \bigr)
	\\\nonumber
	&\,\, =({\bf u}_0, {\bf v}) + \int_0^t \big(\vf(s), \mathbf{v}\big)\, ds  
	 +  {\int_0^t  \Bigl( {\bf B}\bigl({\bf u}(s)\bigr), {\bf v} \Bigr)\, dW(s)}  \quad\forall  \, {\bf v}\in H^1_{per}(D; \mathbb{R}^d)\, , \nonumber \\ 
    &\bigl(\div {\bf u}, q \bigr) =0 \qquad\forall \, q\in  L^2_{per}(D)\,  .  \label{eq2.10b}
		\end{align}
\end{subequations}
\end{theorem}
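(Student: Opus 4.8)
The statement is essentially a packaging of the construction carried out right before it, so the plan is short: take the process $P$ produced by the Helmholtz decomposition \eqref{eq2.8d}, check that the pair $(\bu,P)$ satisfies \eqref{eq2.10}, and then argue uniqueness. First I would record that, with $\bu\in L^2(\Omega;C([0,T];\mV))\cap L^2(0,T;H^2_{per}(D;\R^d))$ and $\vf$, $W$ as assumed, the processes $\mathbf{U}(t)=\int_0^t\bu(s)\,ds$, $\mathbf{F}(t)=\int_0^t\vf(s)\,ds$ and $\mathbf{G}(t)=\int_0^t\mathbf{B}(\bu(s))\,dW(s)$ have the regularity recalled in Section \ref{sec-2.3}, so that $\mathbf{g}(t):=\bu(t)-\Delta\mathbf{U}(t)-\bu_0-\mathbf{F}(t)-\mathbf{G}(t)$ lies in $L^2\bigl(\Omega;L^2(0,T;L^2_{per}(D;\R^d))\bigr)$. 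Identity \eqref{eq2.8c} says that $\mathbf{g}(t)$ is $L^2$-orthogonal to $\mV$ for a.e. $t$, $\mP$-a.s.; since $\mV$ is dense in $\mH$ this gives $\mathbf{P}_{\mH}\mathbf{g}(t)=0$, and the Helmholtz decomposition (equivalently \cite[Theorem 4.1 and Remark 4.3]{LRS03}) then yields a unique $P(t)\in H^1_{per}(D)/\R$ with $\nab P(t)=-\mathbf{g}(t)$, namely the solution of \eqref{poisson} with datum $-\mathbf{g}(t)$ — this is exactly the $P$ of \eqref{eq2.8d}. Adaptedness of $P$ follows because $P(t)$ is the image of the $\cF_t$-measurable quantities $\bu(t),\mathbf{U}(t),\mathbf{F}(t),\mathbf{G}(t)$ under the deterministic bounded solution operator of \eqref{poisson}, and the same boundedness together with the norm bounds on those four processes gives $P\in L^2\bigl(\Omega;L^2(0,T;H^1_{per}(D)/\R)\bigr)$.

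Next I would verify \eqref{eq2.10a}. Fix $t\in(0,T]$ and $\bv\in H^1_{per}(D;\R^d)$. Since $\mathbf{U}(t)\in H^2_{per}(D;\R^d)$, integration by parts (with vanishing boundary contributions thanks to periodicity) gives $\int_0^t(\nab\bu(s),\nab\bv)\,ds=(\nab\mathbf{U}(t),\nab\bv)=-(\Delta\mathbf{U}(t),\bv)$, and likewise $-(\div\bv,P(t))=(\nab P(t),\bv)$. Substituting these into the left-hand side of \eqref{eq2.10a} and using $\nab P(t)=-\mathbf{g}(t)$, the $\bu(t)$ and $\Delta\mathbf{U}(t)$ terms cancel and one is left with $(\bu_0+\mathbf{F}(t)+\mathbf{G}(t),\bv)$; unfolding the definitions of $\mathbf{F}$ and $\mathbf{G}$, together with the standard fact that the bounded linear functional $(\cdot,\bv)$ commutes with the It\^o integral, turns this into $(\bu_0,\bv)+\int_0^t(\vf(s),\bv)\,ds+\int_0^t(\mathbf{B}(\bu(s)),\bv)\,dW(s)$, i.e. the right-hand side of \eqref{eq2.10a}. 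Equation \eqref{eq2.10b} requires nothing, since $\bu(t)\in\mV$ means $\div\bu=0$.

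Uniqueness is immediate: if $P_1,P_2$ both satisfy \eqref{eq2.10}, then $(\div\bv,P_1(t)-P_2(t))=0$, equivalently $(\bv,\nab(P_1(t)-P_2(t)))=0$, for every $\bv\in H^1_{per}(D;\R^d)$; taking $\bv=\nab(P_1(t)-P_2(t))$ forces $\nab(P_1-P_2)=0$, so $P_1=P_2$ in $H^1_{per}(D)/\R$. I expect the only genuinely delicate point of the argument to be the measurability/adaptedness and the $L^2\bigl(\Omega;L^2(0,T;\cdot)\bigr)$-integrability of $t\mapsto P(t)$, for which the structural results of \cite{LRS03} on the Helmholtz decomposition of the stochastic velocity field are invoked; the remaining steps are the routine integration by parts and the bookkeeping with the It\^o integral indicated above.
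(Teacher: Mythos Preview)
Your proposal is correct and follows exactly the route the paper takes: the theorem is stated in the paper as an immediate consequence of the construction \eqref{eq2.8c}--\eqref{eq2.8d}, and you have simply filled in the verification of \eqref{eq2.10a} via integration by parts and the uniqueness argument that the paper leaves implicit. One tiny quibble in your uniqueness step: $\nabla(P_1-P_2)$ need not lie in $H^1_{per}(D;\R^d)$, so you cannot literally take it as a test function; instead use density of $H^1_{per}$ in $L^2_{per}$ to conclude $\nabla(P_1-P_2)=0$ from $(\bv,\nabla(P_1-P_2))=0$ for all $\bv\in H^1_{per}$.
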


\smallskip
System \eqref{eq2.10} can be regarded as a mixed formulation for the stochastic Stokes system 
\eqref{eq1.1}, where the (time-averaged) pressure $P$ is defined.
Below, we also define another time-averaged ``pressure" $$R(t) := P(t) - \int_0^t\xi(s)\, dW(s),$$  where we
use the Helmholtz decomposition ${\bf B}(\bu(t)) = \pmb{\eta}(t) + \nabla \xi(t)$, where 
$\xi\in H^1_{per}(D)/\mathbb{R}$  ${\mathbb P}\mbox{-a.s.}$ such that 
\begin{equation}\label{eq2.8f} 
\bigl(\nabla \xi(t), \nabla \phi \bigr) =  \bigl( {\bf B}(\bu(t)) , \nabla \phi 
\bigr)\qquad \forall\, \phi \in H^1_{per}(D)\, .
\end{equation}
Then, (\ref{eq2.8d}) can be rewritten as 
\begin{equation}\label{eq2.8e}
\nabla R(t)= - \Bigl[ \mathbf{u}(t)- \Delta \mathbf{U}(t)-\mathbf{u}_0 - \mathbf{F}(t) -\int_0^t \pmb{\eta}(s)\, dW(s) \Bigr]
\qquad\forall  \, t\in (0,T),\, {\mathbb P}\mbox{-a.s.} 
\end{equation}
The time averaged ``pressure" $\{R(t); 0\leq t\leq T\}$ will also be a target process to be approximated in our numerical methods. 

\section{Semi-discretization in time}\label{sec-3}
In this section we study the stability and convergence properties of a 
Helmholtz decomposition enhanced Euler-Maruyama time discretization scheme 
that is based on (\ref{eq_new_intro_ref}), where the stochastic pressure is removed 
from the noise term via the Helmholtz decomposition; but its ${\mathbb V}$-valued 
velocity approximation  $\{ {\bf u}^{n+1}\}_n$ still solves the original Euler-Maruyama 
scheme (\ref{eq_new_intro}).

\subsection{Formulation of the time-stepping scheme} \label{sec-3.1}
In the following, let $N$ be a positive integer, $k = \frac{T}{N}$,  and $t_n= nk$ for 
	$n  = 0, 1, \ldots, N$ be  a uniform mesh that covers $[0,T]$.   
 
\medskip
\noindent
{\bf Algorithm 1}

Let ${\bf u}^0={\bf u}_0$.  For $n=0,1,\ldots, N-1$ do the following steps:

\smallskip
{\em Step 1:}    Find $\xi^{n} \in  L^2\bigl(\Omega,H^1_{per}(D)/\mathbb{R}\bigr)$ by solving 
\begin{equation}\label{eq_WF_xi1a}
\big({\nab\xi^{n}}, \nab\phi\big) = \big({\bf B}({\bf u}^n),\nab\phi\big) 
	\qquad \forall \, \phi\in H^1_{per}(D)\, .
\end{equation}

\smallskip
{\em Step 2:} Set $\pmb{\eta}^n := {\bf B}({\bf u}^n)-\nabla \xi^{n}$, and find
$({\bf u}^{n+1},r^{n+1}) \in L^2\bigl(\Omega, {\mathbb V} \times
L^2_{per}(D)/\mathbb{R}\bigr)$ by solving 
\begin{subequations}\label{eq_new}
	\begin{align} \label{eq_newa} 
	\big({\bf u}^{n+1},{\bf v}\big) + & k \big(\nabla {\bf u}^{n+1}, \nabla {\bf v}\big)- k \big(\div {\bf v}, r^{n+1} \big) \\
	&\quad  = \big({\bf u}^n,{\bf v}\big)  + k\big(\vf^{n+1},\mathbf{v}\big) +  \big( \pmb{\eta}^n \Delta_{n+1} W  ,{\bf v}\big) 
	\quad \forall \, {\bf v} \in H^1_{per}(D, {\mathbb R}^d),  \nonumber\\
	\big(\div {\bf u}^{n+1},q&\big) = 0 \qquad\forall \, q\in L^2_{per}(D)\,.\label{eq_newb}
	\end{align}
\end{subequations}

\smallskip
{\em Step 3:} {Define $p^{n+1} := r^{n+1} + k^{-1} \xi^n \Delta_{n+1} W $.} 

\medskip
 \begin{remark}\label{remy1}
By the elliptic regularity theory, see ~\cite[p.~13]{Girault_Raviart86}, the solution of
(\ref{eq_WF_xi1a}) is in $\xi^{n} \in L^2\bigl(\Omega,H^2_{per}(D)/\mathbb{R}\bigr)$, and satisfies {\rm Lebesgue}-a.e.
	\begin{subequations}\label{rem-1}
	\begin{alignat}{2}\label{eq_new_intro_ref_a2}
	 -\Delta \xi^n   &=  -{\rm div}\, {\bf B}({\bf u}^n)   &&\qquad\mbox{in } D\, . 
	\end{alignat}
\end{subequations}
Moreover there exists a constant $C>0$ such that 
\begin{equation}\label{rem-1a}
\Vert \xi^n\Vert_{H^2/\mathbb{R}} \leq C \, \Vert {\rm div}\, {\bf B}({\bf u}^n) \Vert\, . 
\end{equation}
\end{remark}

The solvability  of Algorithm 1 is clear because  
a linear coercive elliptic PDE problem is solved at each step. 
{\em Step 1} in Algorithm 1 requires to solve a Poisson problem  
(\ref{eq_WF_xi1a}), which only slightly increases 
the computational cost if a fast solver is used to solve them.  The iterates $\{(\mathbf{u}_n, r_n)\}_n$ and $\{p_n\}_n$ 
defined in {\em Step 2} and {\em 3} aim to approximate $\{ (\mathbf{u}(t),r(t)); 0\leq t\leq T\}$ and 
$\{ p(t); 0\leq t\leq T\}$, respectively. See subsection \ref{sec-3.4} for details. 

\subsection{Stability estimates} \label{sec-3.2}
In this subsection we present some stability estimates for the time-stepping scheme given in Algorithm 1. 
All these estimates, in particular the estimate for $\{ \nabla r^{n+1}\}_n$,  will play an important 
role in establishing  optimal order error estimates for the fully mixed finite element 
discretization to be given in the next section.

\begin{lemma}\label{lem3.1}
Let $\{ ({\bf u}^{n+1}, r^{n+1})\}_n$ be generated by {\rm Algorithm 1}. There exists
a constant $C>0 $, 
such that 
\begin{eqnarray}
\label{eq3.4}
	&&\quad \max_{1\leq n\leq N} {\mathbb E}\bigl[\|\nabla {\bf u}^n\|^2 \bigr] +{\mathbb E}\bigl[\sum^N_{n=1}\|\nabla ({\bf u}^n-{\bf u}^{n-1})\|^2 \Bigr]  +{\mathbb E} \bigl[k\sum^N_{n=1}\|{\bf A} {\bf u}^n\|^2 \bigr] 
	 \leq C\,,  \\ \label{eq3.4a}
	 &&\quad {\mathbb E}\bigl[k\sum_{n=1}^N \|\nabla r^n\|^2 \bigr]
	 \leq	C\, .  	
\end{eqnarray}

\end{lemma}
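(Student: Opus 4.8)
\textbf{Proof plan for Lemma~\ref{lem3.1}.}
The plan is to derive the two estimates by testing the momentum equation \eqref{eq_newa} with suitable choices of $\mathbf{v}$ and summing in $n$, treating the stochastic terms with It\^o-type cancellations. First I would establish \eqref{eq3.4}. Take $\mathbf{v} = \mathbf{u}^{n+1}$ in \eqref{eq_newa}; the pressure term drops by \eqref{eq_newb}, and the standard polarization identity $2(a-b,a) = \|a\|^2 - \|b\|^2 + \|a-b\|^2$ yields, after multiplying by $2$,
\begin{equation*}
\|\mathbf{u}^{n+1}\|^2 - \|\mathbf{u}^n\|^2 + \|\mathbf{u}^{n+1}-\mathbf{u}^n\|^2 + 2k\|\nabla\mathbf{u}^{n+1}\|^2 = 2k(\vf^{n+1},\mathbf{u}^{n+1}) + 2(\peta^n\Delta_{n+1}W,\mathbf{u}^{n+1}).
\end{equation*}
The key point for the noise term is to split $\mathbf{u}^{n+1} = \mathbf{u}^n + (\mathbf{u}^{n+1}-\mathbf{u}^n)$: the term $2(\peta^n\Delta_{n+1}W,\mathbf{u}^n)$ has zero expectation because $\peta^n$ and $\mathbf{u}^n$ are $\cF_{t_n}$-measurable while $\Delta_{n+1}W$ is independent with mean zero, and the cross term $2(\peta^n\Delta_{n+1}W,\mathbf{u}^{n+1}-\mathbf{u}^n)$ is controlled by Young's inequality against $\tfrac12\|\mathbf{u}^{n+1}-\mathbf{u}^n\|^2$ plus $2|\Delta_{n+1}W|^2\|\peta^n\|^2$, where $\mE[|\Delta_{n+1}W|^2]=k$, $\mE[\|\peta^n\|^2]\le\mE[\|\mathbf{B}(\mathbf{u}^n)\|^2]\le C(1+\mE[\|\mathbf{u}^n\|^2])$ by \eqref{eq2.6b} and the fact that $\mathbf{P}_\mathbb{H}$ is a contraction. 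Taking expectations, summing over $n$, and applying the discrete Gr\"onwall inequality gives $\max_n\mE[\|\mathbf{u}^n\|^2] + \mE[\sum_n\|\mathbf{u}^{n+1}-\mathbf{u}^n\|^2] + k\mE[\sum_n\|\nabla\mathbf{u}^n\|^2]\le C$.

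For the higher-order bound in \eqref{eq3.4}, I would test \eqref{eq_newa} with $\mathbf{v} = -\Delta\mathbf{u}^{n+1}$ (legitimate under periodicity, integrating by parts so that $(\div\mathbf{v},r^{n+1})$ again vanishes and $(\mathbf{u}^{n+1},-\Delta\mathbf{u}^{n+1}) = \|\nabla\mathbf{u}^{n+1}\|^2$), so that $-\big(\peta^n\Delta_{n+1}W + \mathbf{u}^n + k\vf^{n+1},\Delta\mathbf{u}^{n+1}\big) = \big(\mathbf{P}_\mathbb{H}(\cdots),-\Delta\mathbf{u}^{n+1}\big)$ allows replacing $\Delta\mathbf{u}^{n+1}$ by the Stokes operator $\mathbf{A}\mathbf{u}^{n+1}$ against divergence-free test data. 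The polarization identity now produces $\|\nabla\mathbf{u}^{n+1}\|^2 - \|\nabla\mathbf{u}^n\|^2 + \|\nabla(\mathbf{u}^{n+1}-\mathbf{u}^n)\|^2 + 2k\|\mathbf{A}\mathbf{u}^{n+1}\|^2$ on the left; the noise term is handled by the same independence splitting — write the pairing against $\mathbf{A}\mathbf{u}^n + \mathbf{A}(\mathbf{u}^{n+1}-\mathbf{u}^n)$, observe $\mE[(\peta^n\Delta_{n+1}W,\mathbf{A}\mathbf{u}^n)]=0$, and absorb the remainder using Young's inequality with $\varepsilon k$ against $k\|\mathbf{A}\mathbf{u}^{n+1}\|^2$, costing a term $Ck^{-1}\cdot k\cdot\mE[\|\mathbf{B}(\mathbf{u}^n)\|_{H^1}^2]\cdot k$; here \eqref{eq2.6c} and the $H^1$-boundedness of $\mathbf{B}$ together with \eqref{eq3.4} (the velocity part already proved) keep this $\mathcal{O}(k)$ per step, hence $\mathcal{O}(1)$ after summation. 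Discrete Gr\"onwall then closes the estimate. (One must be slightly careful that $\peta^n$ is bounded in $H^1$; this follows since $\peta^n = \mathbf{B}(\mathbf{u}^n) - \nabla\xi^n$ with $\|\nabla\xi^n\|\le\|\mathbf{B}(\mathbf{u}^n)\|$ and, for the $H^1$-norm of $\peta^n$, the elliptic regularity in Remark~\ref{remy1} combined with $\mathbf{B}(\mathbf{u}^n)\in H^1_{per}$.)

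For \eqref{eq3.4a}, the decisive step is to exploit the divergence-free structure of the modified noise. I would test \eqref{eq_newa} with $\mathbf{v} = \nabla r^{n+1}$ (here one works pathwise, fixing $\omega$, as in the introduction's computation \eqref{eq_new_intro_0}): the term $(\mathbf{u}^{n+1},\nabla r^{n+1})$ vanishes by \eqref{eq_newb} and periodicity, the term $(\nabla\mathbf{u}^{n+1},\nabla\nabla r^{n+1})$ vanishes likewise after integration by parts using $\div\mathbf{u}^{n+1}=0$, the term $(\mathbf{u}^n,\nabla r^{n+1})$ — crucially — vanishes only in the special geometry unless one is more careful, so instead I would use that $(\peta^n,\nabla r^{n+1})=0$ because $\div\peta^n=0$ and periodicity. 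This leaves $k\|\nabla r^{n+1}\|^2 = k(\vf^{n+1},\nabla r^{n+1}) - (\mathbf{u}^n,\nabla r^{n+1})$; but $\mathbf{u}^n$ is not divergence-free paired with $\nabla r^{n+1}$ in general... — the clean route is rather to note that $\nabla r^{n+1} = -k^{-1}[\mathbf{u}^{n+1} - k\Delta\mathbf{u}^{n+1} - \mathbf{u}^n - k\vf^{n+1} - \peta^n\Delta_{n+1}W]$ as an identity in $\mathbb{H}^\perp$, i.e. $\nabla r^{n+1}$ is the gradient part of the Helmholtz decomposition of the right-hand side divided by $k$, so $k\|\nabla r^{n+1}\| = \|\mathbf{P}_{\mathbb{H}^\perp}(\mathbf{u}^n + k\vf^{n+1} + \peta^n\Delta_{n+1}W - \mathbf{u}^{n+1} + k\Delta\mathbf{u}^{n+1})\|$; since $\mathbf{P}_{\mathbb{H}^\perp}\peta^n=0$, $\mathbf{P}_{\mathbb{H}^\perp}\mathbf{u}^n$ and $\mathbf{P}_{\mathbb{H}^\perp}\mathbf{u}^{n+1}$... require care. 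I expect \emph{this} to be the main obstacle: to get the sharp estimate \eqref{eq3.4a} one uses that $\nabla r^{n+1} = -k^{-1}\nabla P^{n+1}$-type object equals the gradient of the pressure in the \emph{deterministic} Stokes solve with data $\mathbf{u}^n + k\vf^{n+1} + \peta^n\Delta_{n+1}W$, and by the stability of the Stokes problem $k\|\nabla r^{n+1}\| \le C\|\mathbf{u}^n + k\vf^{n+1} + \peta^n\Delta_{n+1}W - \mathbf{P}_\mathbb{H}(\text{same})\|$, which after squaring, taking expectations, using $\mE[\Delta_{n+1}W]=0$ to kill cross terms involving $\mathbf{u}^n$ and $\vf^{n+1}$, and $\mE[|\Delta_{n+1}W|^2]=k$, gives $k^2\mE[\|\nabla r^{n+1}\|^2] \le Ck\cdot k\,\mE[\|\peta^n\|^2] + \text{(lower order)}$ — wait, that is $\mathcal{O}(k^2)$ per step only after dividing, i.e. $k\mE[\|\nabla r^{n+1}\|^2]\le C\mE[\|\peta^n\|^2] + \cdots$, and summing over the $N = T/k$ steps with $k$ in front gives exactly $k\sum_n\mE[\|\nabla r^{n+1}\|^2]\le C$ using \eqref{eq3.4}. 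The subtlety to be handled carefully is that the $\mathbf{u}^n - \mathbf{P}_\mathbb{H}\mathbf{u}^n$ and $\Delta\mathbf{u}^{n+1}$ contributions to $\mathbf{P}_{\mathbb{H}^\perp}$ of the right-hand side must be shown to telescope or be absorbed — this is where one mimics the argument giving \eqref{eq_new_intro_ref1} and where all the earlier stability bounds \eqref{eq3.4} are invoked. Once that decomposition is set up correctly, the rest is Young's inequality and summation.
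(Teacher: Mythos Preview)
Your approach to \eqref{eq3.4} is standard and matches what the paper defers to the references \cite{CP12,Feng}: test with $\mathbf{u}^{n+1}$ for the $L^2$-level estimate, then with $-\Delta\mathbf{u}^{n+1}$ for the $H^1$-level estimate, handling the noise by the independence splitting. That part is fine.

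For \eqref{eq3.4a} you have the right opening move --- test \eqref{eq_newa} (or rather the strong form \eqref{eq_new_intro_ref_a}) with $\nabla r^{n+1}$ --- but then you talk yourself out of the simple argument. You correctly note that $(\mathbf{u}^{n+1},\nabla r^{n+1})=0$, that $(\nabla\mathbf{u}^{n+1},\nabla\nabla r^{n+1})=0$, and that $(\pmb{\eta}^n,\nabla r^{n+1})=0$ since $\div\pmb{\eta}^n=0$. Your stumbling block is the term $(\mathbf{u}^n,\nabla r^{n+1})$, which you suspect survives. It does not: by construction in Algorithm~1, $\mathbf{u}^n\in\mathbb{V}$ is \emph{exactly} divergence-free (this is \eqref{eq_newb} at the previous step, together with $\mathbf{u}^0=\mathbf{u}_0\in\mathbb{V}$), so integration by parts with periodic boundary conditions gives $(\mathbf{u}^n,\nabla r^{n+1})=-(\div\mathbf{u}^n,r^{n+1})=0$ immediately. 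This is precisely what the paper means in the introduction by ``all the terms that involve the velocity vanish due to the incompressibility property and the periodic boundary condition.''

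Once you accept that, the entire identity collapses to
\[
k\|\nabla r^{n+1}\|^2 = k(\vf^{n+1},\nabla r^{n+1}) \le \tfrac{k}{2}\|\vf^{n+1}\|^2 + \tfrac{k}{2}\|\nabla r^{n+1}\|^2,
\]
i.e.~$\|\nabla r^{n+1}\|^2\le\|\vf^{n+1}\|^2$ pathwise, and summation in $n$ gives \eqref{eq3.4a} directly --- no expectations, no Gr\"onwall, no Helmholtz-projection gymnastics needed. Your detour through $\mathbf{P}_{\mathbb{H}^\perp}$ of the right-hand side is not wrong in spirit, but it is unnecessary and you never close it cleanly; the ``subtlety'' you flag about $\mathbf{u}^n-\mathbf{P}_{\mathbb{H}}\mathbf{u}^n$ is a non-issue because that quantity is zero.
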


\begin{proof} 
	See \cite{CP12,Feng} for a proof of estimate (\ref{eq3.4});
estimate (\ref{eq3.4a}) was already proved in section \ref{sec-1}  after 
(\ref{eq_new_intro_ref}) was introduced. 
 We note that the periodicity of $\mathbf{B}(\bu^n)$
was crucially used in the proof of (\ref{eq3.4}) to avoid the boundary integral terms 
 arising from integration by parts in the noise term.
\end{proof}

\subsection{Error estimate for the velocity approximation} \label{sec-3.3}
Since the velocity approximation  $\{ {\bf u}^{n+1}\}_n$ generated by Algorithm 1 
also solves the original Euler-Maruyama time-stepping scheme (\ref{eq_new_intro}), the following 
optimal order error estimate for $\{ {\bf u}^{n+1}\}_n$ was established  in \cite{CP12,Feng}.
 
\begin{theorem}\label{thm3.3}
Let $\{ ({\bf u}^{n+1}, r^{n+1})\}_n$ be generated by {\rm Algorithm 1}. There exists
a constant $C>0$, 
such that 
	\begin{equation}\label{eq3.6}
	\max_{1\leq n\leq N} \left({\mathbb E}\bigl[ \|{\bf u}(t_n)-{\bf u}^n\|^2\bigr]\right)^{\frac12}
	+ \left({\mathbb E}\bigg[k\sum_{n=1}^{N} \|\nabla \bigl({\bf u}(t_n)-{\bf u}^n\bigr)\|^2 \bigg]\right)^{\frac12} \leq C k^{\frac12}.
	\end{equation}

\end{theorem}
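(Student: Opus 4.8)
The plan is to derive the optimal rate $\mathcal{O}(k^{1/2})$ by comparing the velocity iterates $\{\mathbf{u}^n\}_n$ produced by Algorithm~1 with the variational solution $\mathbf{u}$ at the nodes $t_n$. The crucial structural fact, already emphasized in the excerpt, is that although Algorithm~1 is formulated with the modified pressure $r^{n+1}$ and the divergence-free noise $\pmb{\eta}^n\Delta_{n+1}W$, the resulting $\{\mathbf{u}^n\}_n$ coincides with the iterates of the \emph{original} Euler--Maruyama scheme \eqref{eq_new_intro}; hence the error analysis can be carried out entirely at the level of the divergence-free test space ${\mathbb V}$, where the pressure drops out. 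So first I would test \eqref{eq2.8a} with $\mathbf{v}\in{\mathbb V}$ over the subinterval $[t_n,t_{n+1}]$ and test \eqref{eq_newa} (equivalently \eqref{eq_new_intro_a}) with $\mathbf{v}\in{\mathbb V}_h\cap{\mathbb V}$, and subtract, to obtain an equation for the error $\mathbf{e}^{n+1}:=\mathbf{u}(t_{n+1})-\mathbf{u}^{n+1}$.

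Next I would split the error in the standard way into a consistency (time-discretization) part and a fluctuation part, writing
\begin{equation*}
\mathbf{e}^{n+1}-\mathbf{e}^n
= k\,\mathbf{A}\mathbf{e}^{n+1}
+ \Big(\int_{t_n}^{t_{n+1}}\!\big[\mathbf{A}\mathbf{u}(s)-\mathbf{A}\mathbf{u}(t_{n+1})\big]\,ds\Big)
+ \Big(\int_{t_n}^{t_{n+1}}\!\big[\mathbf{f}(s)-\mathbf{f}(t_{n+1})\big]\,ds\Big)
+ \int_{t_n}^{t_{n+1}}\!\big[\mathbf{B}(\mathbf{u}(s))-\mathbf{B}(\mathbf{u}^n)\big]\,dW(s),
\end{equation*}
in the sense of the ${\mathbb V}$-weak formulation. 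Then I would take the inner product with $\mathbf{e}^{n+1}$, use the identity $2(a-b,a)=\|a\|^2-\|b\|^2+\|a-b\|^2$ on the left, absorb the viscous term $k\|\nabla\mathbf{e}^{n+1}\|^2$, and estimate each forcing term: the deterministic increments contribute $\mathcal{O}(k^2)$ per step by the Hölder-in-time bounds \eqref{eq2.20a}--\eqref{eq2.20b} of Theorem~\ref{thm2.2}, while the stochastic increment requires care. For the stochastic term I would isolate the ``martingale part'' $\big(\int_{t_n}^{t_{n+1}}[\mathbf{B}(\mathbf{u}(s))-\mathbf{B}(\mathbf{u}(t_n))]\,dW(s),\mathbf{e}^n\big)$, which has zero conditional expectation and is handled after summation by the Burkholder--Davis--Gundy inequality together with \eqref{eq2.6a} and \eqref{eq2.20a}; the remaining contribution $\big(\int_{t_n}^{t_{n+1}}[\mathbf{B}(\mathbf{u}(s))-\mathbf{B}(\mathbf{u}^n)]\,dW(s),\mathbf{e}^{n+1}-\mathbf{e}^n\big)$ is bounded using It\^o's isometry, Young's inequality to absorb $c\,\|\mathbf{e}^{n+1}-\mathbf{e}^n\|^2$ into the left-hand side, the Lipschitz bound $\|\mathbf{B}(\mathbf{u}(s))-\mathbf{B}(\mathbf{u}^n)\|\le C(\|\mathbf{u}(s)-\mathbf{u}(t_n)\|+\|\mathbf{e}^n\|)$, and then \eqref{eq2.20a} again for the $\mathcal{O}(k)$ time-continuity contribution.

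After taking expectations, summing over $n$, and collecting terms, one arrives at a discrete Gronwall inequality of the form $\max_m \mathbb{E}[\|\mathbf{e}^m\|^2] + k\sum_n \mathbb{E}[\|\nabla\mathbf{e}^n\|^2] \le C\,k + C\,k\sum_n \mathbb{E}[\|\mathbf{e}^n\|^2]$, whence the claim after applying the discrete Gronwall lemma (the factor $T=Nk$ is absorbed into $C$). I expect the main obstacle to be the careful bookkeeping of the stochastic increment: one must split it so that the genuinely martingale piece is summed \emph{before} applying BDG (rather than estimated term-by-term, which would lose a factor $k^{-1/2}$), and one must keep the $\|\mathbf{e}^{n+1}-\mathbf{e}^n\|^2$ terms with small enough constants that they are dominated by the telescoping term on the left — this is exactly the place where a naive estimate produces a suboptimal rate, and where the higher regularity $\mathbf{u}_0\in L^2(\Omega;{\mathbb V}\cap H^2_{per})$, $\mathbf{f}\in L^2(\Omega;C^{1/2}([0,T];H^1_{per}))$ of Theorem~\ref{thm2.2} is essential. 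Since this estimate is already established in \cite{CP12,Feng}, I would present the argument in the condensed form above and refer to those works for the routine computations.
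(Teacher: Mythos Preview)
Your proposal is correct and matches the paper's treatment: the paper does not give an independent proof but simply observes that the velocity iterates of Algorithm~1 coincide with those of the original Euler--Maruyama scheme \eqref{eq_new_intro} and cites \cite{CP12,Feng} for the estimate, noting (as you do) that the argument exploits the exact divergence-free property of each $\mathbf{u}^n$. Two minor clean-ups: at the semi-discrete level there is no ${\mathbb V}_h$, so the test functions are just in ${\mathbb V}$; and since the statement has $\max_n$ \emph{outside} the expectation, the martingale part vanishes upon taking expectations and BDG is not needed.
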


We note that the proof of the above error estimate crucially uses the fact that 
$\mathbf{u}^n$ is exactly divergence-free for each $0\leq n\leq N$.

\subsection{Error estimates for the pressure approximations}\label{sec-3.4}
An optimal order error estimate was obtained in \cite{Feng} 
for $\{P(t_n)\}_n$ via the Euler-Maruyama 
time-stepping scheme \eqref{eq_new_intro}. 
For the reader's convenience, we here give its proof. 

\begin{theorem}\label{thm3.4a}
	Let $\{p^n; 1\leq n\leq N\}$ be the  pressure in (\ref{eq_new_intro}),   
		and $\{P(t); 0\leq t\leq T\}$ be defined in Theorem \ref{thm 2.2}. 
		There exists a constant $C>0$, 
		such that 
	\begin{align}\label{eq3.26a}
	\Bigl(\mathbb{E}\bigl[ \|P(t_m)-k\sum^m_{n=1}p^n  \|^2 \bigr] \Bigr)^{\frac12}
	\leq C\,  k^{\frac12}\, ,
	\qquad m=1,2,\cdots, N\,.
	\end{align}
\end{theorem}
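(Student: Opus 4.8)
The plan is to compare the time-averaged pressure $P(t_m)$ with the Riemann-type sum $k\sum_{n=1}^m p^n$ by using the defining relations for both quantities and the velocity error estimate \eqref{eq3.6}. First I would extract from \eqref{eq2.8d} (evaluated at $t=t_m$) an expression for $\nabla P(t_m)$, and from summing \eqref{eq_new_intro_a} over $n=0,\dots,m-1$ an analogous discrete expression for $k\sum_{n=1}^m \nabla p^n$: the latter reads $k\sum_{n=1}^m\nabla p^n = -\bigl[\mathbf{u}^m - k\sum_{n=1}^m\Delta\mathbf{u}^n - \mathbf{u}_0 - k\sum_{n=1}^m\vf^n - \sum_{n=0}^{m-1}\mathbf{B}(\mathbf{u}^n)\Delta_{n+1}W\bigr]$. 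Subtracting, the difference $\nabla\bigl(P(t_m)-k\sum p^n\bigr)$ equals the sum of four error contributions: (i) $\mathbf{u}(t_m)-\mathbf{u}^m$; (ii) $\Delta\bigl(\mathbf{U}(t_m)-k\sum_{n=1}^m\mathbf{u}^n\bigr)$, a time-quadrature-plus-velocity error for the Laplacian term; (iii) $\mathbf{F}(t_m)-k\sum_{n=1}^m\vf^n$, a deterministic quadrature error; and (iv) the stochastic-integral error $\mathbf{G}(t_m)-\sum_{n=0}^{m-1}\mathbf{B}(\mathbf{u}^n)\Delta_{n+1}W$.

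The next step is to bound each term in $L^2(\Omega;L^2_{per})$ by $Ck^{1/2}$. Since $P(t_m)-k\sum p^n \in L^2_{per}(D)/\mathbb{R}$, I would recover the $L^2$-norm of the pressure difference from the $H^{-1}$-bound on its gradient via the inf-sup / Ne\v{c}as inequality (there exists $\beta>0$ with $\|q\| \le \beta^{-1}\sup_{\mathbf{v}\in H^1_{per}}\frac{(\div\mathbf{v},q)}{\|\nabla\mathbf{v}\|}$ on the zero-mean subspace), so it suffices to estimate $\mathbb{E}\bigl[\|\nabla(P(t_m)-k\sum p^n)\|_{H^{-1}}^2\bigr]$, i.e. each of (i)--(iv) tested against $\nabla\mathbf{v}$ with $\mathbf{v}\in H^1_{per}$. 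Term (i) is $Ck^{1/2}$ directly by \eqref{eq3.6}. Term (iii) is the standard $O(k^{1/2})$ (in fact $O(k)$ in $L^2$) quadrature error, controlled by the $C^{1/2}$-in-time regularity of $\vf$ from the standing assumption. For term (ii) I would split $\mathbf{U}(t_m)-k\sum_{n=1}^m\mathbf{u}^n = \int_0^{t_m}\bigl(\mathbf{u}(s)-\mathbf{u}(t_{\lceil s\rceil})\bigr)ds + k\sum_{n=1}^m(\mathbf{u}(t_n)-\mathbf{u}^n)$, using the Hölder estimate \eqref{eq2.20b} for the first piece (after integrating by parts to move $\Delta$ onto the test function, so only $\nabla$ of the velocity difference appears, or keeping $\mathbf{A}$ and pairing with $\mathbf{v}\in\mathbb{V}\cap H^2$) and \eqref{eq3.6} for the second. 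Term (iv) is the It\^o-integral part: write $\mathbf{G}(t_m)-\sum_{n=0}^{m-1}\mathbf{B}(\mathbf{u}^n)\Delta_{n+1}W = \sum_{n=0}^{m-1}\int_{t_n}^{t_{n+1}}\bigl(\mathbf{B}(\mathbf{u}(s))-\mathbf{B}(\mathbf{u}^n)\bigr)dW(s)$, apply the It\^o isometry, and bound $\mathbb{E}\|\mathbf{B}(\mathbf{u}(s))-\mathbf{B}(\mathbf{u}^n)\|^2 \le 2C\bigl(\mathbb{E}\|\mathbf{u}(s)-\mathbf{u}(t_n)\|^2 + \mathbb{E}\|\mathbf{u}(t_n)-\mathbf{u}^n\|^2\bigr) \le Ck$ using \eqref{eq2.6a}, \eqref{eq2.20a} and \eqref{eq3.6}; summing over the $m\le N=T/k$ subintervals of length $k$ gives $\mathbb{E}\|(\text{iv})\|^2 \le C\cdot N\cdot k\cdot k = Ck$, hence $O(k^{1/2})$ after taking square roots.

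Combining the four bounds and applying the inf-sup recovery yields \eqref{eq3.26a}. The main obstacle I expect is term (ii): one must be careful that the Laplacian is handled in a norm in which the available velocity-error estimate \eqref{eq3.6} (which only controls $\|\nabla(\mathbf{u}(t_n)-\mathbf{u}^n)\|$ in $\ell^2_k$, not $\|\mathbf{A}(\cdot)\|$) actually applies — this is why the weak formulation \eqref{eq2.10a} is the right starting point, since there $\int_0^t(\nabla\mathbf{u},\nabla\mathbf{v})ds$ appears rather than $\int_0^t(\Delta\mathbf{u},\mathbf{v})ds$, so only first derivatives of the velocity error enter and \eqref{eq3.6} suffices. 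A secondary technical point is ensuring the zero-mean normalizations of $P$, $r^n$ (and hence $p^n$) are consistent so that the inf-sup inequality is legitimately invoked on the difference.
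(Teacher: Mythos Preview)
Your proposal is correct and follows essentially the same route as the paper: sum the discrete scheme, subtract from the identity \eqref{eq2.8d} for $\nabla P(t_m)$, apply the inf--sup (Ne\v{c}as) inequality for the divergence, and bound the resulting five pieces (your (i)--(iv), with (iv) split into the H\"older part and the discrete-error part) exactly as you describe, using Theorem~\ref{thm3.3}, the H\"older estimates in Theorem~\ref{thm2.2}, the It\^o isometry, and the time regularity of $\vf$. Your observation that only $\nabla$ of the velocity error appears after testing weakly (so that \eqref{eq3.6} suffices for term (ii)) is precisely how the paper handles the term $\Vert \nabla {\bf E}^m_{\bf U}\Vert$; the paper treats this tersely, but your expanded treatment is the intended one.
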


\begin{proof}
Consider (\ref{eq_new_intro_a}), and take the sum over steps $0 \leq n \leq m-1$. We denote ${\bf U}^{m} := k \sum_{n=0}^{m-1} {\bf u}^{n+1}$ and $P^m := 
k \sum_{n=0}^{m-1} p^{n+1}$, and therefore obtain
\begin{equation}\label{disc-1} {\bf u}^{m} - {\bf u}^0 -  \Delta {\bf U}^{m} + \nabla P^m = k\sum_{n=0}^{m-1} \vf^{n+1} + \sum_{n=0}^{m-1} {\bf B}({\bf u}^n)\Delta_{n+1} W\, .
\end{equation}
We subtract this equation  from (\ref{eq2.8d}) at time $t = t_m$, and denote
${\bf E}_{\bf U}^m := {\bf U}(t_m) - {\bf U}^m\in L^2(\Ome; H^1_{per}(D;\mathbb{R}^d))$, 
and $E_{P}^m := P(t_m) - P^m \in L^2(\Ome; L^2_{per}(D))$.
By the stability of the divergence operator, there exists $\beta>0$, such that 
\begin{eqnarray}  \label{pressure-e}
\frac{1}{\beta} \Vert E_{P}^m\Vert &\leq&  \sup_{{\bf v} \in H^1_{per}(D; {\mathbb R}^d)}\frac{(E_{P}^m, {\rm div}\, {\bf v})}{\Vert \nabla {\bf v} \Vert} \\ \nonumber
&\leq& \Vert {\bf u}(t_m) - {\bf u}^m\Vert + \Vert \nabla {\bf E}^m_{\bf U}\Vert
+ \Bigl\Vert \sum_{n=0}^{m-1} \bigl( {\bf B}({\bf u}(t_{n})\bigr) - {\bf B}({\bf u}^n)\bigr) \Delta_{n+1} W\Bigr\Vert \\ \nonumber 
&& + \Bigl\Vert  
\sum_{n=0}^{m-1} \int_{t_n}^{t_{n+1}} {\bf B}\bigl({\bf u}(s) \bigr) - {\bf B}\bigl({\bf u}(t_{n})\bigr) \, dW(s)\Bigr\Vert \\\nonumber
&& + \bigg\|\sum_{n=0}^{m-1}\int_{t_n}^{t_{n+1}}\big(\vf(s)  - \vf(t_{n+1})\big)\,ds\bigg\| 
=: {\tt I} +\ldots+{\tt V}\, .
\end{eqnarray}
Taking squares on both sides, and then applying expectations, Theorem \ref{thm3.3} in combination with H\"older's inequality leads to
$$\frac{1}{\beta^2}{\mathbb E}[\Vert E_{P}^m\Vert^2] \leq C k + 
{\mathbb E}\bigl[\Vert {\tt III}\Vert^2 \bigr] + {\mathbb E}\bigl[\Vert {\tt IV}\Vert^2 \bigr] + \mE\big[\|{\tt V}\|^2\big]\, .$$
By Ito's isometry, and (\ref{eq2.6a}), as well as (\ref{eq2.20a}), and Theorem \ref{thm3.3}, we find the bounds
$${\mathbb E}\bigl[\Vert {\tt III}\Vert^2 + \Vert {\tt IV}\Vert^2\bigr]   
\leq C{\mathbb E}\bigl[ k \sum_{n=0}^{m-1} \Vert {\bf u}(t_{n}) - {\bf u}^n\Vert^2\bigr] + C k
\leq Ck\,, $$
and by using Cauchy-Schwarz inequality, we get
\begin{align*}
\mE\big[\|{\tt V}\|^2\big] 
&\leq \mE\bigg[\sum_{n=0}^{m-1}\int_{t_n}^{t_{n+1}}\|\vf(s) - \vf(t_{n+1})\|^2\,ds\bigg] \leq Ck,
\end{align*}
which lead to the desired estimate \eqref{eq3.26a}. 
\end{proof}

We now consider the pressure $R^m := k \sum_{n=0}^{m-1} r^{n+1}$, where $\{r^n\}_{n}$ 
is defined by Algorithm 1. Using the new notation (\ref{disc-1}) can be written as 
\begin{equation}\label{disc-2}{\bf u}^{m} - {\bf u}^0 -  \Delta {\bf U}^{m} + \nabla R^m = k\sum_{n=0}^{m-1} \vf^{n+1} + \sum_{n=0}^{m-1} \bigl( {\bf B}({\bf u}^n) - \nabla \xi^n\bigr)\Delta_{n+1} W\, .
\end{equation}
We again subtract this equation from (\ref{eq2.8e}) at time $t = t_m$, and adapt the error notation in (\ref{pressure-e}),
\begin{equation}\nonumber \frac{1}{\beta} \Vert E_{R}^m\Vert \leq  \sup_{{\bf v} \in H^1_{per}(D; {\mathbb R]^d)}}\frac{(E_{R}^m, {\rm div}\, {\bf v})}{\Vert \nabla {\bf v} \Vert} \\ \nonumber
\leq  {\tt I} +\ldots+{\tt IV} + {\tt V} + {\tt VI}\, ,
\end{equation}
where {\tt V} is the same as above, hence, ${\mathbb E}\bigl[ \Vert {\tt V}\Vert^2\bigr] \leq Ck$, and
$${\tt VI} := \Bigl\Vert \sum_{n=0}^{m-1} \nabla \bigl(\xi(t_n) - \xi^n\bigr) \Delta_{n+1}W \Bigr\Vert +\Bigl\Vert  
\sum_{n=0}^{m-1} \int_{t_n}^{t_{n+1}} \nabla \bigl( \xi(s)   - \nabla \xi(t_{n})\bigr) \, dW(s)\Bigr\Vert =: {\tt VI}_{\tt 1} + {\tt VI}_{\tt 2}
\, .$$
By a stability result for the Poisson problems (\ref{eq2.8f}), (\ref{eq_WF_xi1a}), and property (\ref{eq2.6a}), we 
easily obtain, thanks to Theorem \ref{thm3.3},
$${\mathbb E}\bigl[ \Vert {\tt VI}_{\tt 1}\Vert^2\bigr] 
\leq C{\mathbb E} \biggl[ k \sum_{n=0}^{m-1} \Vert {\bf u}(t_{n}) - {\bf u}^n\Vert^2 \biggr] 
\leq Ck\, .$$
Similarly, we get ${\mathbb E}\bigl[ \Vert {\tt VI}_{\tt 2}\Vert^2\bigr] \leq Ck$.
We collect this result below.
 
\begin{corollary}\label{thm3.4}
	Let $\{r^n; 1\leq n\leq N\}$ be the discrete process from Algorithm 1.   
	There exists a constant $C>0$ 
	such that 
	\begin{align}\label{eq3.26}
	\biggl(\mathbb{E}\bigl[ \|R(t_m)-k\sum^m_{n=1}r^n  \|^2 \bigr] \biggr)^{\frac12}
	\leq C  k^{\frac12}\,, \qquad \qquad m=1,2,\cdots, N\,. 
	\end{align}

\end{corollary}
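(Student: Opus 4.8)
The plan is to mirror exactly the proof of Theorem~\ref{thm3.4a}, replacing the pressure identity~\eqref{disc-1} by its Helmholtz-corrected analogue~\eqref{disc-2} and the exact-solution identity~\eqref{eq2.8d} by~\eqref{eq2.8e}. First I would sum the momentum equation~\eqref{eq_newa} over steps $0\le n\le m-1$ against divergence-free test functions and then, using the mixed structure (i.e.~testing also against non-solenoidal $\mathbf v$ and invoking the $\div$-stability / {\em inf-sup} constant $\beta>0$), recover the identity~\eqref{disc-2} for $R^m=k\sum_{n=0}^{m-1}r^{n+1}$. Subtracting~\eqref{disc-2} from~\eqref{eq2.8e} evaluated at $t=t_m$ and using the stability of the divergence operator gives the bound for $\tfrac1\beta\Vert E_R^m\Vert$ in terms of the six terms ${\tt I},\dots,{\tt VI}$ displayed in the excerpt, where ${\tt I},\dots,{\tt V}$ are literally the same quantities already controlled in the proof of Theorem~\ref{thm3.4a} (so $\mathbb E[\Vert{\tt I}\Vert^2+\dots+\Vert{\tt V}\Vert^2]\le Ck$ by Theorem~\ref{thm3.3}, It\^o's isometry, \eqref{eq2.6a}, \eqref{eq2.20a}, and Cauchy--Schwarz), and only the genuinely new term is ${\tt VI}={\tt VI}_{\tt 1}+{\tt VI}_{\tt 2}$ coming from the discrepancy between the continuous Helmholtz component $\int_0^{t_m}\pmb\eta(s)\,dW(s)$ and its discrete counterpart $\sum_{n=0}^{m-1}\pmb\eta^n\Delta_{n+1}W$.

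For ${\tt VI}_{\tt 1}=\bigl\Vert\sum_{n=0}^{m-1}\nabla(\xi(t_n)-\xi^n)\Delta_{n+1}W\bigr\Vert$ I would take squares and expectations; since $\Delta_{n+1}W$ is independent of $\mathcal F_{t_n}$-measurable quantities and has mean zero, the cross terms vanish and $\mathbb E[\Vert{\tt VI}_{\tt 1}\Vert^2]=\sum_{n=0}^{m-1}\mathbb E[\Vert\nabla(\xi(t_n)-\xi^n)\Vert^2]\,\mathbb E[|\Delta_{n+1}W|^2]\le Ck\sum_{n=0}^{m-1}\mathbb E[\Vert\nabla(\xi(t_n)-\xi^n)\Vert^2]$. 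Comparing the Poisson problems~\eqref{eq2.8f} and~\eqref{eq_WF_xi1a}, subtracting them and testing with $\phi$ such that $\nabla\phi=\nabla(\xi(t_n)-\xi^n)$ gives $\Vert\nabla(\xi(t_n)-\xi^n)\Vert\le\Vert\mathbf B(\mathbf u(t_n))-\mathbf B(\mathbf u^n)\Vert\le C\Vert\mathbf u(t_n)-\mathbf u^n\Vert$ by~\eqref{eq2.6a}; plugging this in and invoking Theorem~\ref{thm3.3} yields $\mathbb E[\Vert{\tt VI}_{\tt 1}\Vert^2]\le Ck\cdot\mathbb E[k\sum_{n=0}^{m-1}\Vert\mathbf u(t_n)-\mathbf u^n\Vert^2]/k\le Ck$ — more precisely $\le C\,\mathbb E[k\sum_{n=0}^{m-1}\Vert\mathbf u(t_n)-\mathbf u^n\Vert^2]\le Ck$ directly from~\eqref{eq3.6}. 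For ${\tt VI}_{\tt 2}=\bigl\Vert\sum_{n=0}^{m-1}\int_{t_n}^{t_{n+1}}\nabla(\xi(s)-\xi(t_n))\,dW(s)\bigr\Vert$ I would use It\^o's isometry to get $\mathbb E[\Vert{\tt VI}_{\tt 2}\Vert^2]=\sum_{n=0}^{m-1}\int_{t_n}^{t_{n+1}}\mathbb E[\Vert\nabla(\xi(s)-\xi(t_n))\Vert^2]\,ds$, then bound $\Vert\nabla(\xi(s)-\xi(t_n))\Vert\le\Vert\mathbf B(\mathbf u(s))-\mathbf B(\mathbf u(t_n))\Vert\le C\Vert\mathbf u(s)-\mathbf u(t_n)\Vert$ via~\eqref{eq2.8f} and~\eqref{eq2.6a}, and finally apply the H\"older-continuity estimate~\eqref{eq2.20a} to obtain $\mathbb E[\Vert\nabla(\xi(s)-\xi(t_n))\Vert^2]\le C|s-t_n|\le Ck$, hence $\mathbb E[\Vert{\tt VI}_{\tt 2}\Vert^2]\le Ck$. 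Collecting ${\tt I},\dots,{\tt VI}$ gives $\mathbb E[\Vert E_R^m\Vert^2]\le Ck$, which is~\eqref{eq3.26}.

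I expect no serious obstacle here, since the argument is a near-verbatim adaptation; the only mildly delicate point is making sure the discrete Helmholtz component $\pmb\eta^n=\mathbf B(\mathbf u^n)-\nabla\xi^n$ pairs cleanly with the continuous one $\pmb\eta(s)$ so that the error in the noise term splits into exactly the two pieces ${\tt VI}_{\tt 1},{\tt VI}_{\tt 2}$ above — this relies on the fact that the gradient parts $\nabla\xi^n$ and $\nabla\xi(s)$ are the genuine Helmholtz projections, so subtracting~\eqref{disc-2} from~\eqref{eq2.8e} produces precisely $\sum_{n=0}^{m-1}\nabla(\xi(t_n)-\xi^n)\Delta_{n+1}W$ plus $\sum_{n=0}^{m-1}\int_{t_n}^{t_{n+1}}\nabla(\xi(s)-\xi(t_n))\,dW(s)$ in the pressure-gradient term, with the solenoidal parts absorbed into terms ${\tt I}$ through ${\tt IV}$. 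One should also note, as in Lemma~\ref{lem3.1}, that periodicity of $\mathbf B(\mathbf u^n)$ and of the test functions is used so that no boundary terms arise when passing between the Poisson problems and the $L^2$-pairings; with that in place the estimate follows and, taking square roots, we arrive at~\eqref{eq3.26}.
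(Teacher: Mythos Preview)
Your proposal is correct and follows essentially the same route as the paper's proof: subtract the summed discrete identity~\eqref{disc-2} from the continuous identity~\eqref{eq2.8e}, invoke the $\div$-stability to get ${\tt I}+\cdots+{\tt VI}$, reuse the bounds for ${\tt I},\dots,{\tt V}$ from Theorem~\ref{thm3.4a}, and control ${\tt VI}_{\tt 1}$ and ${\tt VI}_{\tt 2}$ via the stability of the Poisson problems~\eqref{eq2.8f},~\eqref{eq_WF_xi1a} combined with~\eqref{eq2.6a}, Theorem~\ref{thm3.3}, It\^o's isometry, and~\eqref{eq2.20a}. Your write-up is in fact slightly more explicit than the paper's (which just says ``similarly'' for ${\tt VI}_{\tt 2}$), but the argument is the same.
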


\section{Fully discrete, {\em inf-sup} stable mixed finite element method}\label{sec-4}
In this section, we discretize  Algorithm 1 
in space via an {\em inf-sup} stable mixed finite element method. We choose the prototypical
Taylor-Hood  mixed finite element (see, {\em e.g.}, \cite{Girault_Raviart86,Brezzi_Fortin91}) 
as an example and give a detailed error
analysis for the resulted fully discrete method, but we remark 
that the convergence analysis  below also applies to general {\em inf-sup} stable 
mixed finite elements.  

\subsection{Preliminaries}\label{sec-4.1}
Let $\cT_h$ be a quasi-uniform triangular or rectangular mesh of $D\subset \R^d$ with mesh size $0<h <<1$. 
We define the following finite element spaces:
\begin{align*}
{\mathbb X}_{h} &=\bigl\{{\bf v}_h\in H^1_{per}( {D}; {\mathbb R}^d);\, {\bf v}_h|_K\in P_2(K, {\mathbb R}^d)\,\,\forall \, K\in  \cT_h\bigr\}\, ,\\
W_{h} &=\bigl\{q_h\in H^1_{per}(D)/\mathbb{R};\, q_h|_K\in P_1(K)\,\,\forall \, K\in \cT_h\bigr\}\, ,\\
S_{h} &=\bigl\{\phi_h\in  H^1_{per}(D)/\mathbb{R};\, \phi_h|_K\in P_\ell(K)\,\,\forall \, K\in \cT_h\bigr\}\, ,
\end{align*}
where $P_\ell(K; {\mathbb R}^d)$ ($\ell\geq 1$) denotes the set of ${\mathbb R}^d$-valued polynomials of degree less than or equal to $\ell$
over the element $K\in \cT_h$. In general, we require that $S_{h}\subseteq W_{h}$, 
in particular, we choose $\ell=1$ so that $S_{h}=W_{h}$ in this section.

We recall that the pair $(\mathbb{X}_{h}, {W}_{h})$ satisfies the (discrete)
{\em inf-sup} condition: there
exists an $h$-independent constant $\gamma>0$ such that
\begin{equation}\label{inf-sup-TH}
\sup_{\mathbf{v}_h\in \mathbb{X}_{h}} \frac{ \bigl(\div \mathbf{v}_h,q_h\bigr)}{\|\nab \mathbf{v}_h\|} 
\geq \gamma \|q_h\| 
\qquad \forall q_h\in W_h\, .
\end{equation}
 
\smallskip
Next, let $\rho_h: L^2_{per}(D)\to W_h$ resp.~$\mathcal{R}_h: H^1_{per}(D)/\mathbb{R}\to S_h$ denote the $L^2$-resp.~the Ritz-projection operators which are defined by 
\begin{align}
\label{eq4.8}		
\big(\phi - \rho_h\phi,\chi_h\big) &= 0 \qquad\forall \, \phi\in L^2_{per}(D),\, \chi_h \in {W}_h,\\
 \label{eq4.9}		
\big( \nabla [\psi - \mathcal{R}_h \psi],\nabla\zeta_h\big) &= 0 \qquad\forall \, 
\psi\in H^1_{per}(D)/\mathbb{R},\, \zeta_h \in S_h\, .
\end{align}
Then, the following approximation properties are well known 
(cf. \cite{Ern_Guermond04,Girault_Raviart86,Falk08}):
\begin{align}\label{eq4.2}
\|\phi-\rho_h\phi\| + h\|\nabla(\phi-\rho_h\phi)\| 
&\leq C h^s\|\phi\|_{H^s} \quad \forall \, \phi\in H^s_{per}(D)\, , \\
\|\psi-\mathcal{R}_h\psi\| + h\|\nabla(\psi- \mathcal{R}_h \psi)\| 
&\leq C h^s\|\psi\|_{H^s} \quad \forall \, \psi\in H^s_{per}(D)/\mathbb{R}\,, \label{eq4.2b}
\end{align}
for $s=1,2$. Here, $C$ is a positive constant independent of $h$.

We also consider the space ${\mathbb V}_h\subset {\mathbb X}_h$ of
discretely divergence-free functions, 
\begin{align*}
	{\mathbb V}_h := \bigl\{ {\bf v}_h \in {\mathbb X}_h;\,  (\div {\bf v}_h, q_h ) =0 \quad\forall q_h \in  W_h\bigr\}\, ,
\end{align*}
and define the $L^2_{per}(D; {\mathbb R}^d)$-projection operator  
$\mathbf{P}_h: L^2_{per}(D;\mathbb{R}^d) \rightarrow {\mathbb V}_h$ by
	\begin{align*}
		\bigl({\bf v} - \mathbf{P}_h {\bf v}, {\bf w}_h\bigr) = 0 \qquad\forall \, {\bf v}\in  
	L^2_{per}(D;\mathbb{R}^d),\,   {\bf w}_h\in {\mathbb V}_h\, .
	\end{align*}
The following  approximation properties are well-known (cf. \cite{Rannacher}):
\begin{align} \label{eq4.2c}
\|{\bf v}-\mathbf{P}_h {\bf v}\| + h\|\nabla({\bf v}- \mathbf{P}_h {\bf v})\| 
\leq C h^s\|{\bf v}\|_{H^s} \quad \forall \, {\bf v}\in {\mathbb V} \cap H^s_{per}(D; {\mathbb R}^d)
\end{align}
for $s=1,2$. Here, $C$ is again a positive constant independent of $h$.

\subsection{Formulation of the fully discrete mixed finite element method}\label{sec-4.2} 
The fully discrete, {\em inf-sup} stable finite element below is 
a spatial discretization of Algorithm 1. We note that since ${\mathbb V}_h \not\subset
{\mathbb V}$, in general, the mixed finite element discretization requires improved stability
estimates for the semi-discrete pressure $\{ r^{n+1}\}_n$ as given in Lemma \ref{lem3.1} in 
order to ensure optimal convergence properties.

\medskip
\noindent
{\bf Algorithm 2}

Let ${\bf u}_h^0\in L^2(\Omega; {\mathbb X}_h)$.  For $n=0,1,\ldots, N-1$, we do the following steps:

\smallskip
{\em Step 1:}  Determine ${\xi^{n}_h} \in  L^2(\Omega; S_h)$ by solving 
\begin{equation}\label{eq_WF_xi1b}
	\big(\nab\xi^{n}_h, \nab\phi_h\big) = \big( { {\bf B}({\bf u}^n_h)},\nab\phi_h \big)  
	\qquad \forall\,  \phi_h\in S_h\, .
\end{equation}
\smallskip
{\em Step 2:} Set $\pmb{\eta}^n_h := {\bf B}({\bf u}^n_h)-\nabla \xi^{n}_h$. Find
$({\bf u}^{n+1}_h,r^{n+1}_h) \in L^2\bigl(\Omega, {\mathbb V}_h \times
W_h\bigr)$ by solving 
\begin{subequations}\label{eq_new_h}
	\begin{align}\label{eq_newa_h}
	\bigl({\bf u}^{n+1}_h, {\bf v}_h\bigr) +  & k \bigl(\nabla {\bf u}^{n+1}_h,\nabla {\bf v}_h\bigr)  - k \bigl(\div {\bf v}_h, r^{n+1}_h\bigr) \\ \nonumber
	&= \bigl({\bf u}^n_h,{\bf v}_h\bigr) + k\big(\vf^{n+1}, \mathbf{v}_h\big) + \bigl( \pmb{\eta}^n_h \Delta_{n+1} W , {\bf v}_h\bigr) 
	\qquad\forall \, {\bf v}_h\in {\mathbb X}_h\, ,\\
	\bigl(\div {\bf u}^{n+1}_h,q_h\bigr) &= 0 \qquad\forall \, q_h\in  W_h\,.\label{eq_newb_h}
	\end{align}
\end{subequations}
 
\smallskip
{\em Step 3:} Define the $W_h$-valued random variable {$p^{n+1}_h = r^{n+1}_h + k^{-1} \xi^{n}_h \Delta_{n+1} W  $.}  

\medskip
\begin{remark}
Because of \eqref{eq_WF_xi1b}, we have $(\pmb{\eta}^n_h, \nabla \phi_h) = 0$ for all $\phi_h \in S_h$, ${\mathbb P}$-a.s. We also note that each of {\em Step 1} and {\em Step 2} solves a linear problem which is clearly well-posed; in particular, the well-posedness of \eqref{eq_new_h} is ensured by the {\em inf-sup} property \eqref{inf-sup-TH} of the mixed finite element spaces ${\mathbb X}_h$ and $W_h$. 
\end{remark}

\subsection{Error estimate for the velocity approximation}
The main result of this section is to prove the following optimal estimate for 
the velocity error ${\bf u}^n-{\bf u}^n_h$. 

\begin{theorem}\label{thm4.1}
Suppose that $${\mathbb E}\bigl[\Vert {\bf u}^0 - {\bf u}^0_h\Vert^2\, \bigr] \leq C h^2.$$
Let  $\{ ({\bf u}^n, r^n); 1\leq n\leq N\}$ and 
	$\{ ({\bf u}^n_h, r_h^n); 1\leq n\leq N \}$ be respectively the solutions of Algorithm 1 and 2. Then there exists a constant $C>0$ such that
	\begin{align}\label{eq4.13}
	\max_{1\leq n\leq N} \Bigl({\mathbb E}\bigl[\|{\bf u}^n-{\bf u}^n_h\|^2\, \bigr]\Bigr)^{\frac12}
	+\Bigl({\mathbb E}\Bigl[k\sum_{n=1}^{N}\|\nabla ({\bf u}^n-{\bf u}^n_h)\|^2\, \Bigr] \Bigr)^{\frac12} \leq C\,h\,.
	\end{align}
     
\end{theorem}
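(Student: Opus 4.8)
The plan is to derive the estimate by comparing Algorithm~2 with Algorithm~1 and exploiting the improved stability of $\{\nabla r^n\}_n$ from Lemma~\ref{lem3.1} together with the discrete Helmholtz/projection structure built into {\em Step 1} and {\em Step 2}. First I would set $\mathbf{e}^n := \mathbf{u}^n - \mathbf{u}^n_h$ and split it, in the usual mixed-method way, as $\mathbf{e}^n = (\mathbf{u}^n - \mathbf{P}_h\mathbf{u}^n) + (\mathbf{P}_h\mathbf{u}^n - \mathbf{u}^n_h) =: \peta^n + \mathbf{e}_{\mathbf{u}}^n$, where $\peta^n$ is controlled by the approximation property \eqref{eq4.2c} (using $\mathbf{u}^n \in H^2_{per}$ from Lemma~\ref{lem3.1}), so $\mathbb{E}[\|\peta^n\|^2] + h^2\mathbb{E}[\|\nabla\peta^n\|^2] \le C h^{2s}\mathbb{E}[\|\mathbf{u}^n\|_{H^s}^2]$; similarly I would introduce $\er^n := \rho_h r^n - r^n_h$ and $\theta^n := r^n - \rho_h r^n$ for the pressure, and $\zeta^n := \xi^n - \mathcal{R}_h\xi^n$ for the auxiliary potential. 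The point is that $\mathbf{e}_{\mathbf{u}}^n \in \mathbb{V}_h$, so testing the error equation with $\mathbf{e}_{\mathbf{u}}^{n+1}$ kills the pressure term $k(\div\mathbf{e}_{\mathbf{u}}^{n+1}, r_h^{n+1})$, and the term $k(\div\mathbf{e}_{\mathbf{u}}^{n+1}, r^{n+1})$ can be rewritten, since $\mathbf{e}_{\mathbf{u}}^{n+1}$ is only discretely divergence-free, as $k(\div\mathbf{e}_{\mathbf{u}}^{n+1}, r^{n+1} - \rho_h r^{n+1}) = k(\div\mathbf{e}_{\mathbf{u}}^{n+1}, \theta^{n+1})$, which is the crucial consistency term.

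Next I would subtract \eqref{eq_new_h} from \eqref{eq_new} (tested against $\mathbf{v}_h \in \mathbb{V}_h$), insert the splittings, and arrive at an identity of the schematic form
\begin{align*}
(\mathbf{e}_{\mathbf{u}}^{n+1} - \mathbf{e}_{\mathbf{u}}^n, \mathbf{e}_{\mathbf{u}}^{n+1}) + k\|\nabla\mathbf{e}_{\mathbf{u}}^{n+1}\|^2
&= -(\peta^{n+1} - \peta^n, \mathbf{e}_{\mathbf{u}}^{n+1}) - k(\nabla\peta^{n+1}, \nabla\mathbf{e}_{\mathbf{u}}^{n+1}) \\
&\quad + k(\div\mathbf{e}_{\mathbf{u}}^{n+1}, \theta^{n+1}) + \bigl((\peta^n_{\mathbf{B}} - \nabla\zeta^n)\Delta_{n+1}W, \mathbf{e}_{\mathbf{u}}^{n+1}\bigr),
\end{align*}
where $\peta^n_{\mathbf{B}} := \peta^n - \peta^n_h$ collects the difference of the two divergence-free noise parts coming from {\em Step 1} in the two algorithms, and the $\nabla\zeta^n$ arises from projecting the Poisson problem \eqref{eq_WF_xi1a} onto $S_h$. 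I would then use the algebraic identity $2(a-b,a) = \|a\|^2 - \|b\|^2 + \|a-b\|^2$, take expectations, and crucially exploit that $\pmb{\eta}^n - \pmb{\eta}^n_h$ and $\Delta_{n+1}W$ are $\mathcal{F}_{t_n}$-measurable resp.\ independent, so that the martingale-type cross term $\mathbb{E}[(\cdots\Delta_{n+1}W, \mathbf{e}_{\mathbf{u}}^n)]$ vanishes and only $\mathbb{E}[(\cdots\Delta_{n+1}W, \mathbf{e}_{\mathbf{u}}^{n+1} - \mathbf{e}_{\mathbf{u}}^n)]$ survives, which is $O(k)$ times the increment and absorbs with a Young inequality; here $\mathbb{E}[|\Delta_{n+1}W|^2] = k$ converts the $h$-scale bound on $\pmb\eta^n - \pmb\eta^n_h$ (itself controlled by $\mathbb{E}[\|\mathbf{e}^n\|^2]$ via Lipschitz continuity of $\mathbf{B}$ and \eqref{eq4.2b} applied to the Poisson solve) into the correct order. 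Summing over $n$, the consistency term $k\sum_n \mathbb{E}[(\div\mathbf{e}_{\mathbf{u}}^{n+1}, \theta^{n+1})]$ is bounded by $\tfrac{k}{2}\sum_n\mathbb{E}[\|\nabla\mathbf{e}_{\mathbf{u}}^{n+1}\|^2] + Ck\sum_n\mathbb{E}[\|\theta^{n+1}\|^2]$, and the second factor is $\le Ch^2\,k\sum_n\mathbb{E}[\|\nabla r^{n+1}\|^2] \le Ch^2$ by \eqref{eq4.2} with $s=1$ and — this is the essential input — the improved pressure stability \eqref{eq3.4a}. A discrete Gr\"onwall argument then yields $\max_n\mathbb{E}[\|\mathbf{e}_{\mathbf{u}}^n\|^2] + k\sum_n\mathbb{E}[\|\nabla\mathbf{e}_{\mathbf{u}}^n\|^2] \le Ch^2$, and the triangle inequality with the bounds on $\peta^n$ gives \eqref{eq4.13}.

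The main obstacle is the handling of the two coupled sources of consistency error that are absent in the deterministic Stokes analysis: the projection mismatch $\pmb{\eta}^n - \pmb{\eta}^n_h$ in the noise term and its interaction with the $\Delta_{n+1}W$ factor. One must be careful that the Poisson solve in {\em Step 1} of Algorithm~2 is consistent with that of Algorithm~1 only up to the Ritz projection, so $\pmb\eta^n - \pmb\eta^n_h = (\mathbf{B}(\mathbf{u}^n) - \mathbf{B}(\mathbf{u}^n_h)) - \nabla(\xi^n - \xi^n_h)$, and the gradient difference splits further into $\nabla(\xi^n - \mathcal{R}_h\xi^n)$ (size $O(h)\|\xi^n\|_{H^2} = O(h)\|\div\mathbf{B}(\mathbf{u}^n)\|$ by Remark~\ref{remy1}) plus a Galerkin term controlled by $\|\mathbf{B}(\mathbf{u}^n) - \mathbf{B}(\mathbf{u}^n_h)\| \le C\|\mathbf{e}^n\|$. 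Tracking these terms with the correct powers of $h$ and $k$, and verifying that nothing of order $h k^{-1/2}$ (the suboptimal rate of \cite{Feng}) reappears, is where the argument needs the most care; everything else is a standard energy estimate plus discrete Gr\"onwall.
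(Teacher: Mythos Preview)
Your proposal is correct and follows essentially the same route as the paper: the paper tests the full error equation with $\mathbf{P}_h\mathbf{e}^{n+1}$, which (since $\mathbf{u}^{n+1}_h\in\mathbb{V}_h$) equals your $\mathbf{e}_{\mathbf{u}}^{n+1}=\mathbf{P}_h\mathbf{u}^{n+1}-\mathbf{u}^{n+1}_h$, and then uses exactly the same three ingredients you identify---the pressure consistency term bounded via $\|r^{n+1}-\rho_h r^{n+1}\|\le Ch\|\nabla r^{n+1}\|$ together with the improved stability \eqref{eq3.4a}, the noise term handled by independence of $\Delta_{n+1}W$ from $\mathcal F_{t_n}$ plus the Lipschitz bound on $\mathbf{B}$ and the Poisson-solve error for $\xi^n-\xi^n_h$, and a discrete Gr\"onwall argument. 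One small simplification you can make: the term $-(\pmb{\eta}^{n+1}-\pmb{\eta}^n,\mathbf{e}_{\mathbf{u}}^{n+1})$ in your schematic identity actually vanishes by the $L^2$-orthogonality defining $\mathbf{P}_h$, so it need not be tracked.
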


\begin{proof}
	Define ${\bf e}^n_{\bf u} = {\bf u}^n - {\bf u}^n_h$ and $e^n_r = r^n - r^n_h$. It 
	is easy to check that $\{({\bf e}^n_{\bf u},e^n_r)\}_n$ satisfies the following error equations ${\mathbb P}$-a.s.~for all tuple
	$({\bf v}_h, q_h) \in {\mathbb X}_h \times W_h$,
	\begin{align}	\label{eq4.17}
	 ({\bf e}^{n+1}_{\bf u}-{\bf e}^n_{\bf u},{\bf v}_h ) + k (\nab {\bf e}^{n+1}_{\bf u},\nab {\bf v}_h )  
	&- k ({e}^{n+1}_r,\div {\bf v}_h ) \\ 
	&= \bigl ( [\pmb{\eta}^{n}-\pmb{\eta}_h^{n}]\Delta_{n+1} W, {\bf v}_h \bigr)\, , \nonumber \\
	\label{eq4.18}	
	\big(\div {\bf e}^{n+1}_{\bf u},q_h\big) &=0\, .
	\end{align}
	Now for any fixed $\omega \in \Omega$, setting ${\bf v}_h = \mathbf{P}_h {\bf e}^{n+1}_{\bf u}(\omega) \in {\mathbb V}_h$ in \eqref{eq4.17} yields   
\begin{align}\label{eq4.17a}
 ({\bf e}^{n+1}_{\bf u} -{\bf e}^{n}_{\bf u}, \mathbf{P}_h {\bf e}^{n+1}_{\bf u}  ) + k (\nab {\bf e}^{n+1}_{\bf u},\nab  \mathbf{P}_h {\bf e}^{n+1}_{\bf u}  ) &-k (e^{n+1}_r,\div \mathbf{P}_h {\bf e}^{n+1}_{\bf u} )   \\
&= \bigl(  [\pmb{\eta}^{n}-\pmb{\eta}_h^{n}]\Delta_{n+1} W, \mathbf{P}_h {\bf e}^{n+1}_{\bf u} \bigr)\, . \nonumber
\end{align}

We now estimate each term on the left-hand side of \eqref{eq4.17a} from below. First, by the 
definition of $\mathbf{P}_h$ we get 
\begin{align}\label{eq4.18a}
\bigl( {\bf e}^{n+1}_{\bf u} -{\bf e}^n_{\bf u}, \mathbf{P}_h {\bf e}^{n+1}_{\bf u} \bigr) &= \bigl( \mathbf{P}_h[{\bf e}^{n+1}_{\bf u} -{\bf e}^n_{\bf u}], \mathbf{P}_h {\bf e}^{n+1}_{\bf u} \bigr) \\
&=\frac12 \Bigl[ \| \mathbf{P}_h {\bf e}^{n+1}_{\bf u} \|^2 - \| \mathbf{P}_h {\bf e}^{n}_{\bf u} \|^2\Bigr]
+\frac12 {\| \mathbf{P}_h [{\bf e}^{n+1}_{\bf u} - {\bf e}^{n}_{\bf u}] \|^2}\, . \nonumber
\end{align}

Next, using again the fact that $\mathbf{P}_h {\bf u}^{n+1}_h = {\bf u}^{n+1}_h$ and Schwarz inequality, we obtain
\begin{align}\label{eq4.18b}
k\bigl(\nab {\bf e}^{n+1}_{\bf u},\nab  \mathbf{P}_h {\bf e}^{n+1}_{\bf u} \bigr) 
&= k \|\nab {\bf e}^{n+1}_{\bf u} \|^2 -k\bigl(\nab {\bf e}^{n+1}_{\bf u}, \nab [{\bf u}^{n+1}- \mathbf{P}_h {\bf u}^{n+1}] \bigr) \\ \nonumber
&\geq \frac{k}2 \|\nab {\bf e}^{n+1}_{\bf u} \|^2 -  \frac{k}2 \|\nab [{\bf u}^{n+1}- \mathbf{P}_h {\bf u}^{n+1}]\|^2 \nonumber \\ \nonumber
&\geq \frac{k}2 \|\nab {\bf e}^{n+1}_{\bf u} \|^2 -C\, k h^2 \|{\bf u}^{n+1}\|_{H^2}^2\, , \nonumber
\end{align}
where we have used \eqref{eq4.2c} to get the last inequality. 

For the next term in (\ref{eq4.17a}), using the fact that $\mathbf{P}_h {\bf e}^{n+1}_{\bf u}$ takes 
values in ${\mathbb V}_h$, and estimates \eqref{eq4.2}, \eqref{eq4.2b}, and \eqref{eq4.2c}, we get
\begin{align}\label{eq4.18c}
-k\bigl(e^{n+1}_r,\div \mathbf{P}_h {\bf e}^{n+1}_{\bf u}\bigr) &= -k\bigl(r^{n+1},\div \mathbf{P}_h {\bf e}^{n+1}_{\bf u}\bigr) \\
& = -k\bigl(r^{n+1} - {\rho_h r^{n+1}},\div \mathbf{P}_h {\bf e}^{n+1}_{\bf u}\bigr)\nonumber  \\
&\geq -k \|r^{n+1} - { \rho_h r^{n+1}}\|  \|\div \mathbf{P}_h {\bf e}^{n+1}_{\bf u}\| 
\nonumber \\
&\geq -C hk \|\nabla r^{n+1}\|  \|\nab {\bf e}^{n+1}_{\bf u}\|  \nonumber \\
&\geq -\frac{k}4 \|\nabla {\bf e}^{n+1}_{\bf u}\|^2 - C^2 h^2 k\|\nabla r^{n+1}\|^2\, .\nonumber
\end{align}

Finally, we bound the only term on the right-hand side of \eqref{eq4.17a} from above.  
By the independence of the increments $\{ \Delta_{n+1} W\}_n$, and its distribution, we get
\begin{align}\label{eq4.18d}
{\mathbb E}\bigl[ \bigl(  [\pmb{\eta}^{n}-\pmb{\eta}_h^{n}]\Delta_{n+1} W, \mathbf{P}_h {\bf e}^{n+1}_{\bf u} \bigr) &\bigr] 
={\mathbb E}\bigl[ \bigl(  [\pmb{\eta}^{n}-\pmb{\eta}_h^{n}]\Delta_{n+1} W, \mathbf{P}_h [{\bf e}^{n+1}_{\bf u} - {\bf e}^{n+1}_{\bf u}]\bigr)\bigr]\\
	 	&  \leq 
	 	k {\mathbb E}\bigl[\| \pmb{\eta}^{n}-\pmb{\eta}_h^{n} \|^2 \bigr] +
		\frac14 {\mathbb E}\bigl[{\| \mathbf{P}_h   ({\bf e}^{n+1}_{\bf u} -{\bf e}^n_{\bf u}) \|^2 } \bigr]\, , \nonumber
\end{align}
and because of (\ref{eq2.6}) and (\ref{eq4.2c}), there holds
\begin{align}\label{eq4.18e}
\| \pmb{\eta}^{n}-\pmb{\eta}_h^{n} \|^2 &\leq 2\| {\bf B}({\bf u}^n)-{\bf B}({\bf u}^n_h)\|^2
+ 2\| \nab (\xi^n -  \xi^n_h)\|^2 \\
&\leq 2C \|{\bf e}^n_{\bf u}\|^2 + 2\| \nab (\xi^n -\xi^n_h)\|^2 \nonumber \\
&=2C \|({\bf u}^n- \mathbf{P}_h {\bf u}^n)+ \mathbf{P}_h {\bf e}^n_{\bf u}\|^2 + 2\| \nab (\xi^n -  \xi^n_h)\|^2 \nonumber \\
&\leq C h^4 \|{\bf u}^n\|_{H^2}^2 + 4 C \|\mathbf{P}_h {\bf e}^n_{\bf u}\|^2 
+ { 2\| \nab (\xi^n -  \xi^n_h)\|^2}\, .  \nonumber 
\end{align}	 	
	 	
To control $\| \nab (\xi^n -\xi^n_h)\|$, we recall the definitions of $\xi^n$ 
and $\xi^n_h$ to get 
\begin{align*} 
\bigl( \nab [\xi^n -\xi^n_h], \nabla \phi_h \bigr) 
&= \bigl({\bf B}({\bf u}^n)-{\bf B}({\bf u}^n_h), \nabla \phi_h \bigr) \qquad \forall \phi_h\in S_h\, .
\end{align*}	 
Setting $\phi_h = \mathcal{R}_h [\xi^n - \xi^n_h]= (\xi^n -\xi^n_h) 
- (\xi^n-\mathcal{R}_h \xi^n )$, properties (\ref{eq2.6a}) and (\ref{eq4.2b}) yield
\begin{align*} 
\| \nab (\xi^n -\xi^n_h)\|^2 
&\leq  \bigl( \nab [\xi^n -  \xi^n_h], \nabla [\xi^n-\mathcal{R}_h \xi^n]\bigr)
 + C\|{\bf e}^n_{\bf u}\| \|\nab (\xi^n -\xi^n_h)\| \nonumber \\
&\leq \frac12 \| \nab (\xi^n -\xi^n_h) \|^2 
+ C h^2\|\xi^n\|_{H^2/\mathbb{R}}^2  + C \|{\bf e}^n_{\bf u}\|^2\, .  \nonumber 
\end{align*}	 
Hence, by (\ref{rem-1a}) in Remark \ref{remy1}, 
(\ref{eq4.2c}), and \eqref{eq2.6c}, we get
\begin{align} \label{eq4.18f}
\| \nab (\xi^n -\xi^n_h)\|^2 &\leq C h^2\|\xi^n\|_{H^2/\mathbb{R}}^2  
+ C\|{\bf e}^n_{\bf u}\|^2  \\ 
&\leq C h^2 \|\div {\bf B}({\bf u}^n) \|^2 
+ C \|{\bf e}^n_{\bf u}\|^2  \nonumber \\\nonumber
&\leq C h^2 \|\nab \bu^n \|^2 
+ Ch^4 \|\bu^n\|_{H^2}^2 + C\|\bP_h \be^n_{\mathbf{u}}\|^2.
\end{align}	 
Therefore,
\begin{align}\label{eq4.19}
	\|\pmb{\eta}^n - \pmb{\eta}_h^n\|^2 \leq C \Bigl( h^2   \| \nabla {\bf u}^n\|^2  
	+ h^4 \|{\bf u}^n\|_{H^2}^2
	+  \|\mathbf{P}_h {\bf e}^n_{\bf u}\|^2 \Bigr)\, .
\end{align}

We insert estimates \eqref{eq4.18a}--\eqref{eq4.19} into \eqref{eq4.17a}, take the expectation, 
and apply the summation operator $\sum_{n=0}^{m}$ for any $0\leq m \leq N-1$ to conclude 
\begin{align}\label{eq4.25}
	\frac{1}{2}{\mathbb E}\bigl[\| \mathbf{P}_h {\bf e}^{m+1}_{\bf u}\|^2\bigr] 
	&+ \frac{1}{4}\sum_{n=0}^{m}{\mathbb E}\bigl[\| \mathbf{P}_h ({\bf e}^{n+1}_{\bf u} - {\bf e}^{n}_{\bf u}) \|^2 \bigr] + \frac{1}{4} {\mathbb E}\big[k\sum_{n=0}^{m}\|\nab {\bf e}^{n+1}_{\bf u}\|^2\big] \\ \nonumber
    &\leq C k\sum_{n=0}^{m} {\mathbb E}\bigl[ \| \mathbf{P}_h {\bf e}^{n}_{\bf u} \|^2 \bigr] 
	+ C\,h^2. \nonumber
\end{align}
	 
Applying the discrete Gronwall inequality to \eqref{eq4.25} then leads to
\begin{align}\label{eq4.25b}
	\frac{1}{2} {\mathbb E}\bigl[\|\mathbf{P}_h {\bf e}^{m+1}_{\bf u} \|^2\bigr] &+ \frac{1}{4}\sum_{n=0}^{m} {\mathbb E}\bigl[\|\mathbf{P}_h ({\bf e}^{n+1}_{\bf u} - {\bf e}^{n}_{\bf u}) \|^2 \bigr] 
	+ \frac{1}{4} {\mathbb E}\big[k\sum_{n=0}^{m}\|\nab {\bf e}^{n+1}_{\bf u}\|^2\big] \\\nonumber 
	&\leq  \exp(C\,T)\, Ch^2 \qquad (1\leq m \leq N)\,.
\end{align}
 
Finally, the desired estimate \eqref{eq4.13}  
follows from an application of the triangle inequality on 
${\bf e}^{m+1}_{\bf u} = ({\bf u}^{m+1} -\mathbf{P}_h {\bf u}^{m+1}) + \mathbf{P}_h {\bf e}^{m+1}_{\bf u}$ 
and using \eqref{eq4.25b} and \eqref{eq4.2c}. The proof is complete. 
\end{proof}

\subsection{Error estimates for the pressure approximations}\label{sec-4.4}
In this subsection, we derive some error estimates for both, $r^n-r^n_h$ and $p^n-p^n_h$. The argumentation parallels the one in section \ref{sec-3.4}, and uses
the {\em inf-sup} condition  (\ref{inf-sup-TH}),  in particular.
 
\begin{theorem}\label{thm4.2}
	Suppose that $${\mathbb E}\bigl[\Vert {\bf u}^0 - {\bf u}^0_h\Vert^2\, \bigr] \leq C h^2.$$
Let  $\{ ({\bf u}^n, r^n); 1\leq n\leq N\}$ and 
	$\{ ({\bf u}^n_h, r_h^n); 1\leq n\leq N \}$ be respectively the solutions of Algorithm 1 and 2.
	There exists a constant $C>0$,
	such that 
	\begin{align*}\label{eq4.22}
	\bigl(\mathbb{E}\bigl[ \bigl\|k\sum^N_{n=1} (r^n- r^n_h) \bigr\|^2\, \bigr]\bigr)^{\frac12}
	\leq C\,  h\, .
	\end{align*}
\end{theorem}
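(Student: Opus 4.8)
The plan is to bound the discrete pressure error by testing the error equation \eqref{eq4.17} with a cleverly chosen velocity test function and exploiting the discrete \emph{inf-sup} condition \eqref{inf-sup-TH}. First I would sum \eqref{eq4.17} over $n=0,\dots,m-1$ to obtain a summed error equation of the form
\begin{align*}
\bigl({\bf e}^{m}_{\bf u}-{\bf e}^0_{\bf u},{\bf v}_h\bigr) + k\sum_{n=1}^{m}\bigl(\nab {\bf e}^{n}_{\bf u},\nab{\bf v}_h\bigr) - \Bigl(k\sum_{n=1}^{m} e^n_r,\div{\bf v}_h\Bigr) = \Bigl(\sum_{n=0}^{m-1}[\pmb{\eta}^n-\pmb{\eta}^n_h]\Delta_{n+1}W,{\bf v}_h\Bigr)
\end{align*}
for all ${\bf v}_h\in{\mathbb X}_h$, which isolates the time-averaged pressure error $E^m := k\sum_{n=1}^{m}(r^n-r^n_h)\in W_h$. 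By \eqref{inf-sup-TH}, $\gamma\|E^m\|\leq \sup_{{\bf v}_h}\frac{(\div{\bf v}_h,E^m)}{\|\nab{\bf v}_h\|}$, and rearranging the summed equation bounds this supremum by
\[
\|{\bf e}^m_{\bf u}\| + \|{\bf e}^0_{\bf u}\| + \Bigl\|k\sum_{n=1}^m\nab{\bf e}^n_{\bf u}\Bigr\| + \Bigl\|\sum_{n=0}^{m-1}[\pmb{\eta}^n-\pmb{\eta}^n_h]\Delta_{n+1}W\Bigr\|\,.
\]
Taking $m=N$, squaring, and applying expectations reduces everything to three contributions.

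For the first two terms I would use Theorem \ref{thm4.1}: $\bigl({\mathbb E}[\|{\bf e}^N_{\bf u}\|^2]\bigr)^{1/2}\leq Ch$ and $\bigl({\mathbb E}[\|{\bf e}^0_{\bf u}\|^2]\bigr)^{1/2}\leq Ch$ by hypothesis. For the accumulated gradient term, Cauchy--Schwarz in the time index gives $\|k\sum_{n=1}^N\nab{\bf e}^n_{\bf u}\|^2\leq T\cdot k\sum_{n=1}^N\|\nab{\bf e}^n_{\bf u}\|^2$, and the right-hand side has expectation $\leq Ch^2$ again by Theorem \ref{thm4.1}. For the stochastic sum, because $\pmb{\eta}^n-\pmb{\eta}^n_h$ is ${\mathcal F}_{t_n}$-measurable and $\Delta_{n+1}W$ is independent of it with ${\mathbb E}[|\Delta_{n+1}W|^2]=k$, the increments are orthogonal in $L^2(\Omega)$, so It\^o-type isometry yields ${\mathbb E}\bigl[\|\sum_{n=0}^{N-1}[\pmb{\eta}^n-\pmb{\eta}^n_h]\Delta_{n+1}W\|^2\bigr]= k\sum_{n=0}^{N-1}{\mathbb E}[\|\pmb{\eta}^n-\pmb{\eta}^n_h\|^2]$; then estimate \eqref{eq4.19} together with Lemma \ref{lem3.1} (for ${\mathbb E}[\|\nab{\bf u}^n\|^2]$ and ${\mathbb E}[k\sum\|{\bf A}{\bf u}^n\|^2]$, hence $\|{\bf u}^n\|_{H^2}$) and Theorem \ref{thm4.1} (for ${\mathbb E}[\|{\bf P}_h{\bf e}^n_{\bf u}\|^2]\leq Ch^2$) bounds this by $Ch^2$. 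Combining these gives $\gamma^2\,{\mathbb E}[\|E^N\|^2]\leq Ch^2$, which is the claim.

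The main obstacle I anticipate is the careful bookkeeping of the stochastic sum: one must verify that the summands $[\pmb{\eta}^n-\pmb{\eta}^n_h]\Delta_{n+1}W$ really are pairwise orthogonal in $L^2(\Omega;L^2_{per})$—this hinges on $\pmb{\eta}^n_h$ (through ${\bf u}^n_h$) being $\mathcal{F}_{t_n}$-adapted and hence independent of the future increment $\Delta_{n+1}W$, which is exactly the adaptedness structure built into Algorithm 1 and Algorithm 2. A secondary subtlety is that \eqref{eq4.19} carries an $h^4\|{\bf u}^n\|_{H^2}^2$ term whose expectation is only controlled \emph{after} multiplication by $k$ and summation (via the $k\sum{\mathbb E}[\|{\bf A}{\bf u}^n\|^2]\leq C$ bound in Lemma \ref{lem3.1}); the factor $k$ supplied by It\^o's isometry is precisely what makes this term contribute $O(h^4)\leq O(h^2)$ rather than blowing up. Everything else is a routine chain of triangle and Cauchy--Schwarz inequalities.
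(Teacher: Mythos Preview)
Your approach is correct and matches the paper's proof: sum \eqref{eq4.17} in time, invoke the discrete \emph{inf-sup} condition \eqref{inf-sup-TH}, and control the four resulting terms via Theorem~\ref{thm4.1}, the hypothesis on ${\bf e}^0_{\bf u}$, Cauchy--Schwarz in the time index, and It\^o isometry combined with \eqref{eq4.19} and Lemma~\ref{lem3.1}. One small slip (which the paper also glosses over): $E^m\notin W_h$ in general since $r^n\notin W_h$, so \eqref{inf-sup-TH} strictly applies only to $\rho_h E^m$; the remaining piece $k\sum_{n}(r^n-\rho_h r^n)$ is $O(h)$ by \eqref{eq4.2} and the bound \eqref{eq3.4a} in Lemma~\ref{lem3.1}, so the argument goes through unchanged.
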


\begin{proof}
	Summing \eqref{eq4.17} (after lowering the index by one) over $1\leq n\leq m  \leq N $ leads to	
	\begin{align*}
	\bigl(\beu^m,\bv_h\bigr) &+ k\sum_{n=1}^m \bigl(\nab \beu^n, \nab \bv_h\bigr) 
	-k\sum_{n=1}^m \bigl(\div \bv_h, e^n_r \bigr)\\
	&\qquad=(\beu^0, \bv_h)+ \sum_{n=1}^m \bigl([\peta^{n-1}-\peta_h^{n-1}]\Delta_{n} W,\bv_h\bigr)
	\qquad\forall \bv_h\in \mathbb{X}_h\, .
	\end{align*}
By (\ref{inf-sup-TH}), we conclude
(compare with (\ref{pressure-e})) 
\begin{equation*}	\frac{1}{3\gamma} \Vert k \sum_{n=1}^m e^n_r\Vert \leq 
	\Vert {\bf e}^m_{\bf u}\Vert + \Vert {\bf e}^0_{\bf u}\Vert + \bigl\Vert  k \sum_{n=1}^m \nabla
	{\bf e}^m_{\bf u}\bigr\Vert + \bigl\Vert \sum_{n=1}^m (\pmb{\eta}^{n-1} -
	\pmb{\eta}^{n-1}_h) \Delta_{n+1}W\bigr\Vert\, .
	\end{equation*}
	Taking expectations after squaring both sides, using estimate (\ref{eq4.18e}) and
	Theorem \ref{thm4.1} yield the desired result.
\end{proof}

The following result now is a simply corollary of Theorem \ref{thm4.2}.
\begin{corollary}\label{cor4.4}
	Let $\{p^n_h\}_n$ be the solution in Algorithm 2. Then there exists a 
	constant $C>0$, 
	such that 
	\begin{align*}
	\Bigl(\mathbb{E}\bigl[ \bigl\|k\sum^N_{n=1} (p^n- p^n_h) \bigr\|^2\, \bigr]\Bigr)^{\frac12}
	 \leq C\, h\, .
	\end{align*}
\end{corollary}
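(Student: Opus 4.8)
The plan is to reduce Corollary~\ref{cor4.4} to Theorem~\ref{thm4.2} by exploiting the explicit relation between the two discrete pressures defined in {\em Step 3} of Algorithms~1 and~2. Recall that $p^{n+1} = r^{n+1} + k^{-1}\xi^n\Delta_{n+1}W$ and $p^{n+1}_h = r^{n+1}_h + k^{-1}\xi^n_h\Delta_{n+1}W$, so that
\begin{align*}
p^{n+1} - p^{n+1}_h = (r^{n+1} - r^{n+1}_h) + k^{-1}(\xi^n - \xi^n_h)\Delta_{n+1}W\, .
\end{align*}
Multiplying by $k$ and summing over $1\le n\le N$, the triangle inequality gives
\begin{align*}
\Bigl\|k\sum_{n=1}^N (p^n - p^n_h)\Bigr\|
\le \Bigl\|k\sum_{n=1}^N (r^n - r^n_h)\Bigr\| + \Bigl\|\sum_{n=1}^N (\xi^{n-1} - \xi^{n-1}_h)\Delta_n W\Bigr\|\, .
\end{align*}
After squaring and taking expectations, the first term is $O(h^2)$ directly by Theorem~\ref{thm4.2}, so the whole work is to bound the second term.

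For the noise-type term, I would use the independence of each increment $\Delta_n W$ from $\xi^{n-1}-\xi^{n-1}_h$ (which is $\cF_{t_{n-1}}$-measurable) together with $\mE[|\Delta_n W|^2]\le Ck$ and $\mE[\Delta_m W\,\Delta_n W]=0$ for $m\ne n$ to obtain
\begin{align*}
\mE\Bigl[\Bigl\|\sum_{n=1}^N (\xi^{n-1}-\xi^{n-1}_h)\Delta_n W\Bigr\|^2\Bigr]
= \sum_{n=1}^N \mE\bigl[\|\xi^{n-1}-\xi^{n-1}_h\|^2\bigr]\,\mE\bigl[|\Delta_n W|^2\bigr]
\le C\,k\sum_{n=0}^{N-1}\mE\bigl[\|\nabla(\xi^n-\xi^n_h)\|^2\bigr]\, ,
\end{align*}
where the last inequality uses the Poincar\'e inequality on $H^1_{per}(D)/\mathbb{R}$. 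It then remains to invoke the estimate for $\|\nabla(\xi^n-\xi^n_h)\|^2$ already established in the proof of Theorem~\ref{thm4.1}, namely \eqref{eq4.18f}, which bounds it by $C(h^2\|\nabla\bu^n\|^2 + h^4\|\bu^n\|_{H^2}^2 + \|\bP_h\be^n_{\bf u}\|^2)$. Combining this with the stability estimate \eqref{eq3.4} (so that $k\sum_n\mE[\|\nabla\bu^n\|^2]\le C$ and $k\sum_n\mE[\|\bu^n\|_{H^2}^2]\le C$) and with the velocity error bound \eqref{eq4.25b} (so that $k\sum_n\mE[\|\bP_h\be^n_{\bf u}\|^2]\le Ch^2$), we get $k\sum_{n=0}^{N-1}\mE[\|\nabla(\xi^n-\xi^n_h)\|^2]\le Ch^2$, which finishes the bound.

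There is no serious obstacle here: the result genuinely is a corollary, and the only mildly delicate point is ensuring the martingale/independence structure is used correctly so that the sum of increments does not accumulate an extra factor — but this is exactly the same telescoping-in-$L^2$ computation already performed for terms like {\tt III} and {\tt VI} in section~\ref{sec-3.4} and for the right-hand side of \eqref{eq4.17a} in the proof of Theorem~\ref{thm4.1}. Collecting the two pieces via the triangle inequality and taking square roots yields $\bigl(\mE[\|k\sum_{n=1}^N(p^n-p^n_h)\|^2]\bigr)^{1/2}\le Ch$, as claimed.
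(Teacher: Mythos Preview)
Your argument is correct and matches the paper's intent: the paper states Corollary~\ref{cor4.4} as a direct consequence of Theorem~\ref{thm4.2} without providing a proof, and your approach---splitting $p^n-p^n_h$ via the {\em Step~3} relations in Algorithms~1 and~2, invoking Theorem~\ref{thm4.2} for the $r$-part, and controlling the $\xi$-part through the discrete It\^o isometry together with \eqref{eq4.18f}, \eqref{eq3.4}, and \eqref{eq4.25b}---is exactly the computation that makes this immediate. The independence/martingale bookkeeping you flag is handled correctly and no additional idea is needed.
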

 
\subsection{Space-time error estimates for Algorithm 2}\label{sec-4.5}
Theorems \ref{thm3.3}, \ref{thm3.4a}, \ref{thm4.1}, \ref{thm4.2}, 
and Corollaries \ref{thm3.4}, \ref{cor4.4} 
now provide the following global error estimates.

\begin{theorem}\label{thm4.5}
Let $({\bf u}, P)$ solve (\ref{eq1.1}), and $\{ ({\bf u}^n_h, r_h^n, p_h^n); 1\leq n\leq N \}$ solves Algorithm 2. There exists a constant $C>0$, 
	\begin{align*}
	{\rm (i)}\ & \max_{1\leq n\leq N} \Bigl(\mathbb{E}\bigl[\|\bu(t_n)-\bu^n_h\|^2\,\bigl]\Bigr)^{\frac12}
	 +\Bigl( \mathbb{E}\Bigl[ k\sum_{n=1}^{N}\|\nabla (\bu(t_n)-\bu^n_h)\|^2\,\Bigr] \Bigr)^{\frac12} 
	\leq C \big( k^{\frac12}+  \,  h\bigr)\,, \\ 
	{\rm (ii)}\ & \Bigl(\mathbb{E} \bigl[ \bigl\|R(t_m) -k\sum^m_{n=1}r^n_h \bigr\|^2\, \bigr]\Bigr)^{\frac12} 
	+ 
	\Bigl( \mathbb{E}\bigl[\bigl\|P(t_m) -k\sum^m_{n=1}p^n_h \bigr\|^2\,\bigr] \Bigr)^{\frac12}
\leq C \bigl(k^{\frac12}+ h\bigr)\, ,
	\end{align*}
	for all $1 \leq m \leq N$.
\end{theorem}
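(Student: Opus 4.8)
The plan is to obtain Theorem~\ref{thm4.5} purely as a bookkeeping exercise by combining the semi-discrete error estimates (Theorem~\ref{thm3.3} and Corollaries/Theorems \ref{thm3.4a}, \ref{thm3.4}) with the space-discretization error estimates (Theorems \ref{thm4.1}, \ref{thm4.2} and Corollary~\ref{cor4.4}) through the triangle inequality. No new estimate is needed; the entire content is a split of the total error into a ``time'' part and a ``space'' part.

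For part (i), I would write, for each fixed $1 \le n \le N$,
\begin{equation*}
\|\bu(t_n)-\bu^n_h\| \le \|\bu(t_n)-\bu^n\| + \|\bu^n-\bu^n_h\|\, ,
\end{equation*}
take $L^2(\Omega)$-norms, use $(a+b)^2\le 2a^2+2b^2$, and then take $\max_{1\le n\le N}$ (resp. $k\sum_{n=1}^N$ of the gradient term). The first summand is bounded by $Ck^{1/2}$ via Theorem~\ref{thm3.3}, the second by $Ch$ via Theorem~\ref{thm4.1} (whose hypothesis $\mE[\|\bu^0-\bu^0_h\|^2]\le Ch^2$ I carry over into the statement, as is already done). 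The $\max$ commutes with both the triangle and the $(a+b)^2$ bounds since each $\mE$-term is individually controlled; likewise the $k\sum_{n=1}^N\|\nabla(\cdot)\|^2$ splits additively. This gives the $C(k^{1/2}+h)$ bound.

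For part (ii), I would treat the two summands separately but identically in form. For the $R$-term, insert the intermediate quantity $k\sum_{n=1}^m r^n$:
\begin{equation*}
\Bigl\|R(t_m)-k\sum_{n=1}^m r^n_h\Bigr\| \le \Bigl\|R(t_m)-k\sum_{n=1}^m r^n\Bigr\| + \Bigl\|k\sum_{n=1}^m (r^n-r^n_h)\Bigr\|\, ,
\end{equation*}
then take $L^2(\Omega)$-norms and apply Corollary~\ref{thm3.4} to the first term (giving $Ck^{1/2}$) and Theorem~\ref{thm4.2} to the second (giving $Ch$; note Theorem~\ref{thm4.2} is stated with $m=N$, but its proof via (\ref{inf-sup-TH}) is uniform in $m$, so the bound holds for every $1\le m\le N$ — I would remark on this). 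For the $P$-term, the same telescoping with $k\sum_{n=1}^m p^n$ as intermediary, together with Theorem~\ref{thm3.4a} and Corollary~\ref{cor4.4}, yields $C(k^{1/2}+h)$. Summing the two bounds and using $(a+b)\le C(a+b)$ trivially finishes part (ii).

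The only ``obstacle,'' and it is a mild one, is making sure the auxiliary estimates are invoked with the correct index range: Theorem~\ref{thm3.3} and Theorem~\ref{thm4.1} are already uniform in $n$ via an explicit $\max_{1\le n\le N}$, and Corollaries \ref{thm3.4a}, \ref{thm3.4} are stated for all $m=1,\dots,N$, so only Theorem~\ref{thm4.2} and Corollary~\ref{cor4.4} need the observation that their proofs are uniform in $m$. I would therefore phrase the proof as: ``Combine the triangle inequality with the cited results; uniformity in $m$ in Theorem~\ref{thm4.2} and Corollary~\ref{cor4.4} follows since the {\em inf-sup} argument (\ref{inf-sup-TH}) and Theorem~\ref{thm4.1} hold for every $1\le m\le N$.'' That is the whole proof — it is essentially a one-paragraph assembly, and I would keep it short.
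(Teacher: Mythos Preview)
Your proposal is correct and matches the paper's own treatment exactly: the paper does not give a separate proof of Theorem~\ref{thm4.5} but simply states that it follows by combining Theorems~\ref{thm3.3}, \ref{thm3.4a}, \ref{thm4.1}, \ref{thm4.2} and Corollaries~\ref{thm3.4}, \ref{cor4.4}, which is precisely the triangle-inequality assembly you describe. Your observation that Theorem~\ref{thm4.2} and Corollary~\ref{cor4.4} are stated only for $m=N$ but hold uniformly in $m$ is a useful clarification the paper leaves implicit.
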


  
\section{Stabilization methods for (\ref{eq1.1})}\label{sec5}\label{sec-5}
The scheme in section \ref{sec-4} requires {\em inf-sup} stable pairings $({\mathbb X}_h, W_h)$, 
for which the Taylor-Hood mixed finite element is one example. 
By recalling its definition in subsection \ref{sec-4.1}, we observe that the dimension
of ${\mathbb X}_h$ exceeds that of $W_h$. The motivation for the stabilization methods in  
\cite{hughes} is to relax the  {\em inf-sup} stability criterion for pairings of ansatz spaces 
in order to allow for equal-order ansatz spaces for both, velocity and pressure approximates; see 
\cite{hughes, Brezzi_Fortin91, Girault_Raviart86, Ern_Guermond04} for further details. 
 
Below we replace ${\mathbb X}_h$ defined in subsection \ref{sec-4.1} by 
$${\mathbb Y}_{h} =\bigl\{{\bf v}_h\in H^1_{per}( {D}; {\mathbb R}^d);\, {\bf v}_h|_K\in P_1(K, {\mathbb R}^d)\,\,\forall \, K\in  \cT_h\bigr\}\,,$$
to which we associate the $L^2_{per}$-projection operator ${\bf Q}_h:
L^2_{per}(D; {\mathbb R}^d) \rightarrow {\mathbb Y}_{h}$ by
$$({\bf v} - {\bf Q}_h {\bf v}, {\bf w}_h) = 0 
\qquad \forall\, {\bf v} \in L^2_{per}(D; {\mathbb R}^d), \ {\bf w}_h \in {\mathbb Y}_{h}\,,$$
which satisfies the following approximation property (cf. \cite{Ern_Guermond04}):
\begin{align} \label{eq4.100c}
\|{\bf v}-\mathbf{Q}_h {\bf v}\| + h\|\nabla({\bf v}- \mathbf{Q}_h {\bf v})\| 
\leq C h^s\|{\bf v}\|_{H^s} \quad \forall \, {\bf v}\in  H^s_{per}(D; {\mathbb R}^d)
\end{align}
for $s=1,2$. Here, $C>0$ is a constant independent of $h$. 
Moreover, let $W_h$ be the same as in section \ref{sec-4},  and $\widetilde{\mathcal R}_h$ 
denote the Ritz projection from $H^1_{per}(D)/\mathbb{R}$ to $W_{h}$.
Again, we take $S_h=W_h$ in this section.

In this section, we consider the equal-order pairing $({\mathbb Y}_{h}, W_{h})$ to discretize
(\ref{eq1.1}) based on \eqref{eq_new_intro_ref_2a}--\eqref{eq_new_intro_ref_2b}, which violates the {\em inf-sup} condition; in fact, the following estimate is known to hold (cf. \cite{hughes}): there exists $\delta >0$ independent of $h>0$, such that 
\begin{equation}\label{lbb_disk}
\frac{1}{\delta^2} \Vert q_h\Vert^2 \leq \sup_{{\bf v}_h \in {\mathbb Y}_{h}}
\frac{\vert(q_h, {\rm div}\, {\bf v}_h)\vert^2}{\Vert \nabla {\bf v}_h\Vert^2}
+ h^2 \Vert \nabla q_h\Vert^2 \qquad \forall\, q_h \in W_{h}\, .
\end{equation}
\eqref{lbb_disk} can be regarded as the reason why this pairing still performs optimally
when applied to the Stokes problem,  
where $\varepsilon = {\mathcal O}(h^2)$.  Below we 
show that such a strategy can be again successful for the stochastic 
Stokes problem (\ref{eq1.1}), 
if the {\em proper} pressure is chosen for the perturbation, and that 
using the Helmholtz projection of the noise provides such an approach.  

To prepare for the analysis, we start with a modification of Algorithm 1 that perturbs 
the incompressibility constraint.

\medskip
\noindent
{\bf Algorithm 3}

Let $0<\varepsilon \ll 1$ and ${\bf u}_{\varepsilon}^0 = {\bf u}_0$.  For $n=0,1,\ldots, N-1$, do
the following steps:

\smallskip
{\em Step 1:}  Find $\xi^{n}_{\varepsilon} \in  L^2\bigl(\Omega,H^1_{per}(D)/\mathbb{R}\bigr)$ by solving 
\begin{equation}\label{eq_WF_xi1bc}
	\big(\nab\xi^{n}_{\varepsilon}, \nab\phi\big) = \big(  {\bf B}({\bf u}^n_{\varepsilon}),\nab\phi \big)  
	\qquad \forall \phi\in H^1_{per}(D)\, .
\end{equation}

\smallskip
{\em Step 2:} Set $\pmb{\eta}^n_{\varepsilon} := {\bf B}({\bf u}^n_\varepsilon)-\nabla \xi^{n}_\varepsilon$, and find
$({\bf u}^{n+1}_\varepsilon,r^{n+1}_\varepsilon) \in L^2\bigl(\Omega, H^1_{per}(D; {\mathbb R}^d) \times
H^1_{per}(D)/\mathbb{R}\bigr)$ by solving 
\begin{subequations}\label{eq_new_hc}
	\begin{align}
	&\big({\bf u}^{n+1}_\varepsilon,{\bf v}\big) + k \big(\nabla {\bf u}^{n+1}_\varepsilon,\nabla {\bf v}\big)  - k \big(\div {\bf v}, r^{n+1}_{\varepsilon}\big)  \label{eq_newa_hc} \\
	&\hskip 0.5in 
	= \big({\bf u}^{n}_\varepsilon, {\bf v}\big) + k\big(\vf^{n+1},\mathbf{v}\big) + \big(\pmb{\eta}^n_{\varepsilon}\Delta_{n+1}W, {\bf v} \bigr) 
	\qquad\forall \, {\bf v}\in H^1_{per}(D; {\mathbb R}^d)\, ,  \nonumber\\
	&\bigl(\div {\bf u}^{n+1}_\varepsilon,q\bigr) + {\varepsilon \bigl(\nabla r^{n+1}_{\varepsilon},\nabla q\bigr)} = 0 \qquad\forall q\in H^1_{per}(D)\, .\label{eq_newb_hc}
	\end{align}
\end{subequations}

\smallskip
{\em Step 3:} Define $p^{n+1}_\varepsilon := r^{n+1}_\varepsilon +k^{-1} \xi^{n}_\varepsilon\Delta_{n+1} W$.  

\medskip
Because each step involves a coercive linear problem, Algorithm 3 has a
unique solution. The first energy estimate can be obtained from (\ref{eq_new_hc}) by fixing 
one $\omega \in \Omega$ and choosing
$( {\bf v}, q ) = \bigl({\bf u}^{n+1}_\varepsilon(\omega), r^{n+1}_\varepsilon(\omega)\bigr)$, we then 
obtain the identity
\begin{align}\label{hhelp0}
\frac{1}{2}\Bigl(& \Vert {\bf u}^{n+1}_{\varepsilon}\Vert^2 - \Vert {\bf u}^{n}_{\varepsilon}\Vert^2 + \Vert {\bf u}^{n+1}_{\varepsilon} - {\bf u}^{n}_{\varepsilon}\Vert^2\Bigr) \\ \nonumber
&+ \frac{k}{2} \Vert \nabla {\bf u}^{n+1}_{\varepsilon}\Vert^2 + k \varepsilon
\Vert \nabla r^{n+1}_{\varepsilon}\Vert^2 = \frac{Ck}{2}\|\vf^{n+1}\|^2 + \bigl( \pmb{\eta}^{n}_\varepsilon \Delta_{n+1} W, {\bf u}^{n+1}_{\varepsilon}\bigr)\, .
\end{align}
Taking expectations, applying the summation operator $\sum_{n=0}^{m}$ for any $0\leq m \leq N-1$, and using
 the independence of the the increments $\{\Delta_{n+1}W\}_n$ yield
\begin{eqnarray}\label{hhelp1} {\mathbb E}\bigl[ \bigl( \pmb{\eta}^{n}_\varepsilon \Delta_{n+1} W, {\bf u}^{n+1}_{\varepsilon}\bigr)\bigr] &=& {\mathbb E}\bigl[ \bigl( \pmb{\eta}^{n}_\varepsilon \Delta_{n+1} W, [{\bf u}^{n+1}_{\varepsilon}- {\bf u}^{n}_{\varepsilon}]\bigr)\bigr] \\ \nonumber
&\leq& k {\mathbb E}\bigl[ \Vert \pmb{\eta}^{n}_{\varepsilon}\Vert^2\bigr] + \frac{1}{4} {\mathbb E}\bigl[ \Vert {\bf u}^{n+1}_{\varepsilon}- {\bf u}^{n}_{\varepsilon} \Vert^2\bigr]\, .
\end{eqnarray}
Because of (\ref{eq_WF_xi1bc}) and (\ref{eq2.6b}), we have  $${\mathbb E}\bigl[ \Vert \nabla \xi^n_\varepsilon\Vert^2\bigr] \leq {\mathbb E}\bigl[ \Vert
{\bf B}({\bf u}^{n}_{\varepsilon})\Vert^2\bigr] \leq C(1+ {\mathbb E}[\Vert {\bf u}^{n}_{\varepsilon}\Vert^2)\,,$$
and therefore ${\mathbb E}[\Vert \pmb{\eta}^n_\varepsilon\Vert^2] \leq 
C(1+ {\mathbb E}[\Vert {\bf u}^{n}_{\varepsilon}\Vert^2)$ in (\ref{hhelp1}).
We insert these auxiliary estimates into (\ref{hhelp0}), take expectations, 
sum over all iteration steps, and use the discrete Gronwall inequality  to get 
\begin{align}\label{hhelp2}
\max_{0 \leq n \leq N-1} \frac{1}{2} {\mathbb E}\bigl[ \Vert {\bf u}^{n+1}_\varepsilon\Vert^2\bigr] 
&+ k\sum_{n=0}^{N-1} {\mathbb E}\bigl[ \Vert \nabla {\bf u}^{n+1}_{\varepsilon}\Vert^2 + \varepsilon \Vert \nabla r^{n+1}_\varepsilon\Vert^2\bigr] \\
& \leq C \Bigl( {\mathbb E}\bigl[\Vert {\bf u}_0 \Vert^2\bigr] 
 + k\sum_{n=0}^{N-1}  {\mathbb E}\bigl[ \|\vf^{n+1}\|^2 \bigr]  \Bigr)
\, . \nonumber
\end{align}

Note that the estimate for $\{ \nabla r^{n+1}\}$ is scaled by $\varepsilon >0$, which is too weak in the following to verify optimal error estimates for a spatial discretization of Algorithm 3. 
The following stability result therefore sharpens the estimate (\ref{hhelp2}); its proof 
crucially exploits again the fact that each $\pmb{\eta}^n_\varepsilon$ is a ${\mathbb H}$-valued 
random variable.

\begin{lemma}\label{stabilty-thm}
	 Let $\{ (u^n_\varepsilon,r^n_\varepsilon) \}_n$ be 
	 the solution of Algorithm 3. Then there exists a constant $C>0$, 
	 such that 
\begin{align}\label{stability-est} 
\max_{1\leq n\leq N} {\mathbb E}\bigl[ \|\nabla {\bf u}^n_\varepsilon\| ^2 \bigr]
+ k\sum_{n=1}^N {\mathbb E}\bigl[ \|\Delta {\bf u}^n_\varepsilon\| ^2]  + k\sum_{n=1}^N  {\mathbb E}\bigl[ {\|\nab r^n_{\varepsilon}\| ^2 } \bigr] &\leq C\, 
\end{align}

\end{lemma}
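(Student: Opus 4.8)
### Proof strategy for Lemma \ref{stabilty-thm}

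The plan is to follow the standard route for $H^2$-type (Stokes-operator) stability estimates for Euler--Maruyama schemes, but to carefully track the pressure-gradient term, which is the whole point of the lemma. First I would fix $\omega\in\Omega$ and test \eqref{eq_newa_hc} with $-\Delta{\bf u}^{n+1}_\varepsilon(\omega)$ (equivalently, with ${\bf A}{\bf u}^{n+1}_\varepsilon$ after using the periodicity to drop boundary terms). The diffusion term produces $k\|\Delta{\bf u}^{n+1}_\varepsilon\|^2$; the term $({\bf u}^{n+1}_\varepsilon-{\bf u}^n_\varepsilon,-\Delta{\bf u}^{n+1}_\varepsilon)$ gives, after integration by parts and the algebraic identity $2(a-b,a)=\|a\|^2-\|b\|^2+\|a-b\|^2$, the telescoping quantity $\tfrac12\bigl(\|\nabla{\bf u}^{n+1}_\varepsilon\|^2-\|\nabla{\bf u}^n_\varepsilon\|^2+\|\nabla({\bf u}^{n+1}_\varepsilon-{\bf u}^n_\varepsilon)\|^2\bigr)$. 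The forcing term is controlled by $k\|\nabla\vf^{n+1}\|^2$ (using the $H^1$-regularity of $\vf$ from the standing assumption) plus $\tfrac{k}{4}\|\Delta{\bf u}^{n+1}_\varepsilon\|^2$ via Young. The delicate term is the pressure contribution $-k(\div(-\Delta{\bf u}^{n+1}_\varepsilon), r^{n+1}_\varepsilon) = k(\nabla\div{\bf u}^{n+1}_\varepsilon,\nabla r^{n+1}_\varepsilon)$, which does \emph{not} vanish because ${\bf u}^{n+1}_\varepsilon$ is only \emph{perturbed}-divergence-free; I would instead handle the pressure by a separate, companion estimate described next, and couple the two.

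The key new ingredient is to extract an unweighted bound on $\|\nabla r^{n+1}_\varepsilon\|$. For this I would test \eqref{eq_newa_hc} with $\nabla r^{n+1}_\varepsilon(\omega)$: the term $k(\nabla{\bf u}^{n+1}_\varepsilon,\nabla\nabla r^{n+1}_\varepsilon)$ is rewritten via integration by parts (periodicity) as $-k(\div{\bf u}^{n+1}_\varepsilon,\Delta r^{n+1}_\varepsilon)$, and then \eqref{eq_newb_hc} lets me replace $\div{\bf u}^{n+1}_\varepsilon = \varepsilon\Delta r^{n+1}_\varepsilon$, yielding $-k\varepsilon\|\Delta r^{n+1}_\varepsilon\|^2\le 0$ — harmless. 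The diffusion term $-k(\div{\bf v},r)$ with ${\bf v}=\nabla r^{n+1}_\varepsilon$ becomes $-k(\Delta r^{n+1}_\varepsilon, r^{n+1}_\varepsilon)=k\|\nabla r^{n+1}_\varepsilon\|^2$ after integration by parts, which is exactly the good term I want, scaled by $k$ rather than $k\varepsilon$. The term $({\bf u}^{n+1}_\varepsilon-{\bf u}^n_\varepsilon,\nabla r^{n+1}_\varepsilon)= -(\div({\bf u}^{n+1}_\varepsilon-{\bf u}^n_\varepsilon), r^{n+1}_\varepsilon)=-\varepsilon(\Delta(r^{n+1}_\varepsilon-r^n_\varepsilon),r^{n+1}_\varepsilon)=\varepsilon(\nabla(r^{n+1}_\varepsilon-r^n_\varepsilon),\nabla r^{n+1}_\varepsilon)$, which telescopes up to $\varepsilon\|\nabla r^{n+1}_\varepsilon\|^2/2$ contributions that are absorbed by \eqref{hhelp2}. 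The crucial point is that the noise term is now $(\pmb{\eta}^n_\varepsilon\Delta_{n+1}W,\nabla r^{n+1}_\varepsilon)$, and since $\div\pmb{\eta}^n_\varepsilon=0$ in the periodic setting this equals $-(\Delta_{n+1}W)(\div\pmb{\eta}^n_\varepsilon, r^{n+1}_\varepsilon)=0$ — here is where the Helmholtz decomposition does its work. Thus one gets, after taking expectations and summing, $k\sum_n\mE[\|\nabla r^{n+1}_\varepsilon\|^2]\le C\bigl(k\sum_n\mE[\|\vf^{n+1}\|^2] + \text{(absorbable }\varepsilon\text{-terms)}\bigr)$.

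With $k\sum_n\mE[\|\nabla r^{n+1}_\varepsilon\|^2]\le C$ in hand, I return to the first estimate: the remaining pressure term $k(\nabla\div{\bf u}^{n+1}_\varepsilon,\nabla r^{n+1}_\varepsilon)=k\varepsilon(\nabla\Delta r^{n+1}_\varepsilon,\nabla r^{n+1}_\varepsilon)$ — or more robustly I would avoid differentiating $r$ twice and instead observe that testing \eqref{eq_newa_hc} with ${\bf A}{\bf u}^{n+1}_\varepsilon$ (the Helmholtz-projected Laplacian) kills the pressure term entirely up to a term involving $\div{\bf u}^{n+1}_\varepsilon=\varepsilon\Delta r^{n+1}_\varepsilon$, which is $O(\varepsilon)$ and absorbable using the $\varepsilon\|\nabla r^{n+1}_\varepsilon\|^2$ bound from \eqref{hhelp2}. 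The noise term $(\pmb{\eta}^n_\varepsilon\Delta_{n+1}W,{\bf A}{\bf u}^{n+1}_\varepsilon)$ is split as $(\pmb{\eta}^n_\varepsilon\Delta_{n+1}W,{\bf A}({\bf u}^{n+1}_\varepsilon-{\bf u}^n_\varepsilon)) + (\pmb{\eta}^n_\varepsilon\Delta_{n+1}W,{\bf A}{\bf u}^n_\varepsilon)$; the second piece vanishes in expectation by independence, and the first is bounded by $Ck\mE[\|\nabla\pmb{\eta}^n_\varepsilon\|^2]+\tfrac14 k\mE[\|\Delta({\bf u}^{n+1}_\varepsilon-{\bf u}^n_\varepsilon)\|^2]$, the latter absorbed by the telescoped gradient-difference term. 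Here $\|\nabla\pmb{\eta}^n_\varepsilon\|\le\|\nabla{\bf B}({\bf u}^n_\varepsilon)\| + \|\nabla^2\xi^n_\varepsilon\|\le C(1+\|{\bf u}^n_\varepsilon\|_{H^1})$ by the $H^1\to H^1$ mapping property of ${\bf B}$ in \eqref{eq2.6} and the elliptic regularity \eqref{rem-1a} applied to \eqref{eq_WF_xi1bc}, so it is controlled by $\mE[\|\nabla{\bf u}^n_\varepsilon\|^2]$ plus a constant. Summing and invoking the discrete Gronwall inequality then closes the estimate for $\max_n\mE[\|\nabla{\bf u}^n_\varepsilon\|^2]$ and $k\sum_n\mE[\|\Delta{\bf u}^n_\varepsilon\|^2]$ simultaneously.

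The main obstacle I anticipate is the bookkeeping of the $\varepsilon$-perturbation terms: at several points the non-vanishing of $\div{\bf u}^{n+1}_\varepsilon$ introduces extra $\varepsilon$-weighted pressure terms (e.g. $k\varepsilon\|\Delta r^{n+1}_\varepsilon\|$-type quantities), and one must make sure each is either nonnegative on the correct side, or telescopes, or is dominated by the already-established weighted bound \eqref{hhelp2} — rather than needing a bound on $\|\Delta r^{n+1}_\varepsilon\|$ which is not available. Choosing the test functions as Helmholtz-projected quantities (${\bf A}{\bf u}$ rather than $-\Delta{\bf u}$) is the cleanest way to minimize these cross terms, and the divergence-free property of $\pmb{\eta}^n_\varepsilon$ is what makes the noise harmless in every test we perform.
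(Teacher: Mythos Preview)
Your overall two-stage strategy (first bound $k\sum_n\mE[\|\nabla r^{n+1}_\varepsilon\|^2]$, then test with $-\Delta{\bf u}^{n+1}_\varepsilon$ to get the $H^2$ velocity bound) matches the paper's architecture, and your $H^2$ step is essentially the paper's Step 2. However, your route to the pressure bound is genuinely different. The paper obtains $k\sum_n\mE[\|\nabla r^{n+1}_\varepsilon\|^2]\le C$ by a \emph{perturbation argument}: it subtracts Algorithm 3 from Algorithm 1, tests the resulting error system with $({\bf e}^{n+1}_{\bf u},e^{n+1}_r)$, and uses the already-known bound \eqref{eq3.4a} for $\{\nabla r^{n+1}\}$ from Lemma \ref{lem3.1} to control the perturbation term $k\varepsilon(\nabla r^{n+1},\nabla e^{n+1}_r)$; the triangle inequality then transfers the bound to $\nabla r^{n+1}_\varepsilon$. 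Your route is direct: test \eqref{eq_newa_hc} with $\nabla r^{n+1}_\varepsilon$, use $\div\pmb{\eta}^n_\varepsilon=0$ to kill the noise, and handle the remaining $\varepsilon$-weighted cross terms by telescoping. Both are correct; yours is more self-contained (no appeal to Algorithm 1), while the paper's simultaneously delivers the error estimate of Theorem \ref{error-thm} as a byproduct of the same computation.

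Two technical points to tighten. First, testing with $\nabla r^{n+1}_\varepsilon$ requires $r^{n+1}_\varepsilon\in H^2_{per}$; you should say explicitly that this follows from elliptic regularity applied to \eqref{eq_newb_hc} (i.e.\ $\varepsilon\Delta r^{n+1}_\varepsilon=\div{\bf u}^{n+1}_\varepsilon\in L^2$). The sign on the $k(\nabla{\bf u}^{n+1}_\varepsilon,\nabla\nabla r^{n+1}_\varepsilon)$ term actually comes out $+k\varepsilon\|\Delta r^{n+1}_\varepsilon\|^2$ (on the left), not $-$, but either way it is nonnegative and harmless. Second, in your $H^2$ step you write that the noise contributes $\tfrac14 k\,\mE[\|\Delta({\bf u}^{n+1}_\varepsilon-{\bf u}^n_\varepsilon)\|^2]$ to be absorbed by the gradient-difference term; this does not match. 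The clean version (and what the paper does) is to integrate by parts first, so the noise term becomes $(\nabla\pmb{\eta}^n_\varepsilon\,\Delta_{n+1}W,\nabla({\bf u}^{n+1}_\varepsilon-{\bf u}^n_\varepsilon))$, yielding $Ck\,\mE[\|\nabla\pmb{\eta}^n_\varepsilon\|^2]+\tfrac14\mE[\|\nabla({\bf u}^{n+1}_\varepsilon-{\bf u}^n_\varepsilon)\|^2]$, and now the second piece is genuinely absorbed by the $\tfrac12\|\nabla({\bf u}^{n+1}_\varepsilon-{\bf u}^n_\varepsilon)\|^2$ on the left. With the pressure bound already in hand, there is no need for the ${\bf A}{\bf u}$ detour: testing with $-\Delta{\bf u}^{n+1}_\varepsilon$ and bounding the pressure cross term by $k\|\nabla r^{n+1}_\varepsilon\|^2+\tfrac{k}{4}\|\Delta{\bf u}^{n+1}_\varepsilon\|^2$ closes the argument directly.
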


\begin{proof} 
	{\em Step 1:}  We adapt the argumentation from \cite[Thm.~3.1]{CHP12}, and interpret problem (\ref{eq_new_intro_ref_2}) --- with $\pmb{\eta}^n$ being replaced by
$\pmb{\eta}^n_\varepsilon$ --- as perturbation of problem
(\ref{eq_new_intro_ref}) 
Subtracting the corresponding equations of both systems and denoting
${\bf e}^{n+1}_{\bf u} := {\bf u}^{n+1} - {\bf u}^{n+1}_\varepsilon$ resp.~$e^{n+1}_r := r^{n+1} - r^{n+1}_{\varepsilon}$ yield
\begin{subequations}
\begin{alignat}{2}\label{eq_new_intro_ref_3a}
	 {\bf e}_{\bf u}^{n+1} - k \Delta {\bf e}_{\bf u}^{n+1} + k \nabla e_{r}^{n+1}  & =  {\bf e}_{\mathbf{u}}^n   +  [\pmb{\eta}^n- \pmb{\eta}^n_\varepsilon] \Delta_{n+1} W 
	 &&\qquad\mbox{in } D \,,  \\ \label{eq_new_intro_ref_3b}
	 \div {\bf e}_{\bf u}^{n+1} - \varepsilon \Delta e^{n+1}_r &= - \varepsilon \Delta r^{n+1}  
	 &&\qquad\mbox{in } D\, .
\end{alignat}
\end{subequations}
Now fix one $\omega \in \Omega$, test (\ref{eq_new_intro_ref_3a}) with
${\bf v} = {\bf e}_{\bf u}^{n+1}(\omega)$, and (\ref{eq_new_intro_ref_3b}) with $q = e^{n+1}_r(\omega)$, and afterwards sum both equations,  we then conclude
\begin{eqnarray}\label{error_stab}
&& \frac{1}{2} \Bigl( \Vert {\bf e}_{\bf u}^{n+1}\Vert^2 - \Vert {\bf e}_{\bf u}^{n}\Vert^2 +
\Vert {\bf e}_{\bf u}^{n+1} - {\bf e}_{\bf u}^{n}\Vert^2 \Bigr) +  k \Vert \nabla {\bf e}_{\bf u}^{n+1}\Vert^2 
+k \varepsilon\Vert \nabla {e}_{r}^{n+1}\Vert^2 \\ \nonumber
&&\qquad = \bigl( [\pmb{\eta}^n - \pmb{\eta}^n_{\varepsilon}] \Delta_{n+1} W, {\bf e}_{\bf u}^{n+1}\bigr) + k \varepsilon (\nabla r^{n+1}, \nabla e^{n+1}_r)\, .
\end{eqnarray}
Using Young's inequality, hiding one part of the last term into the corresponding term 
on the left-hand side, using the independence of $ \Delta_{n+1}W$ and ${\bf e}^n_{\bf u}$, 
$ \Delta_{n+1}W$ as well as of $[\pmb{\eta}^n - \pmb{\eta}^n_{\varepsilon}]$, and utilizing 
 (\ref{eq2.6}), we obtain
\begin{eqnarray*}
&&{\mathbb E}\bigl[ \bigl( [\pmb{\eta}^n - \pmb{\eta}^n_{\varepsilon}] \Delta_{n+1} W, {\bf e}_{\bf u}^{n+1}\bigr)\bigr] =
{\mathbb E}\bigl[ \bigl( [\pmb{\eta}^n - \pmb{\eta}^n_{\varepsilon}] \Delta_{n+1} W, {\bf e}_{\bf u}^{n+1}- {\bf e}_{\bf u}^{n}\bigr)\bigr] \\
&&\qquad \leq k {\mathbb E}\bigl[ \Vert \pmb{\eta}^n - \pmb{\eta}^n_{\varepsilon}\Vert^2\bigr] + \frac{1}{2} {\mathbb E}\bigl[ \Vert {\bf e}_{\bf u}^{n+1}- {\bf e}_{\bf u}^{n} \Vert^2\bigr] \\
&&\qquad \leq C k\Bigl({\mathbb E}[\Vert {\bf e}^{n}_{\bf u}\Vert^2]+ 
{\mathbb E}\bigl[\Vert \nabla (\xi^n - \xi^n_{\varepsilon})\Vert^2\bigr]\Bigr) + \frac{1}{2} {\mathbb E}\bigl[ \Vert {\bf e}_{\bf u}^{n+1}- {\bf e}_{\bf u}^{n} \Vert^2\bigr]\, .
\end{eqnarray*}
Subtracting (\ref{eq_WF_xi1bc}) from (\ref{eq_WF_xi1a}) and using (\ref{eq2.6}), we get 
        \begin{equation*}
        \Vert \nabla (\xi^n - \xi^n_{\varepsilon})\Vert \leq \Vert {\bf B}({\bf u}^n) - {\bf B}({\bf u}^{\varepsilon})\Vert \leq C \Vert {\bf e}^{n}_{\bf u}\Vert\, .
        \end{equation*}
        
 Hence, we then conclude from (\ref{error_stab}) with the help of the discrete Gronwall inequality, 
 and (\ref{eq3.4a}) that
 \begin{eqnarray*}
&& \frac{1}{2}  {\mathbb E}\bigl[ \Vert {\bf e}_{\bf u}^{m+1}\Vert^2\bigr] +
\sum_{n=0}^m {\mathbb E}\bigl[\Vert {\bf e}_{\bf u}^{n+1} - {\bf e}_{\bf u}^{n}\Vert^2
\bigr]   +  k \sum_{n=0}^m {\mathbb E}\bigl[ \Vert \nabla {\bf e}_{\bf u}^{n+1}\Vert^2 
+ \varepsilon\Vert \nabla {e}_{r}^{n+1}\Vert^2\bigr] \\
&&\qquad \leq C \varepsilon k\sum_{n=0}^m{\mathbb E}\bigl[ \Vert \nabla r^{n+1}\Vert^2 \bigr] \leq C \varepsilon
 \qquad (0 \leq m \leq N)\, .
\end{eqnarray*}
Consequently, by (\ref{eq3.4a}) we conclude that 
$$
k \sum_{n=0}^N {\mathbb E}\bigl[\Vert \nabla e^{n+1}_r\Vert^2\,\bigr]\leq C
\qquad \mbox{implies} \qquad
k \sum_{n=0}^N {\mathbb E}\bigl[\Vert \nabla r^{n+1}_{\varepsilon}\Vert^2\,\bigr]\leq C.
$$ 
  
\medskip
{\em Step 2:} 
Fix one $\omega \in \Omega$ in (\ref{eq_new_intro_ref_2a}), multiply the equation with 
$-\Delta {\bf u}^{n+1}_\varepsilon(\omega)$, integrate,  perform an integration by 
	parts on the last term, and use the periodicity of $\pmb{\eta}^n_{\varepsilon}$ and ${\bf u}^{n+1}_\varepsilon$, we get  
\begin{eqnarray}\label{last_term_1}
&&\frac{1}{2}{\mathbb E}\Bigl[ \Vert \nabla {\bf u}^{n+1}_\varepsilon\Vert^2 -
\Vert \nabla {\bf u}^{n}_\varepsilon\Vert^2 +
\Vert \nabla ({\bf u}^{n+1}_\varepsilon -  {\bf u}^{n}_\varepsilon)
\Vert^2 + k\Vert \Delta {\bf u}^{n+1}_\varepsilon\Vert^2
\Bigr]\\
&&\qquad\qquad \leq k{\mathbb E}\bigl[ \Vert \nabla r^{n+1}_\varepsilon\Vert^2\bigr]
+ \mE\Bigl[ \int_D \Delta_{n+1}W \nab \pmb{\eta}^n_{\varepsilon} \cdot \nab ({\bf u}^{n+1}_\varepsilon - {\bf u}^{n}_\varepsilon ) \, d{\bf x} \Bigr] \, . \nonumber
\end{eqnarray}

To bound the last term above, we use Schwarz inequality, the fact that 
$\pmb{\eta}^{n}_{\varepsilon} = \mathbf{B}(\bu^{n}_{\varepsilon}) - \nab\xi^n_{\varepsilon}$, \eqref{eq2.6}, and \eqref{rem-1a} to get  
\begin{align} \label{last_term_2}
\mE\Bigl[ \int_D \Delta_{n+1}W \nab \pmb{\eta}^n_{\varepsilon} \cdot \nab ({\bf u}^{n+1}_\varepsilon - {\bf u}^{n}_\varepsilon ) \, d{\bf x} \Bigr] 
&\leq \frac14 \mE\bigl[ \Vert \nabla ( {\bf u}^{n+1}_\varepsilon -
 {\bf u}^{n}_\varepsilon) \Vert^2 \bigr]  \\
&\qquad + C k \mE\bigl[ \| \nabla \bu^n \|^2  \bigr].  \nonumber
\end{align}
Substituting \eqref{last_term_2} into \eqref{last_term_1}, summing over all time steps,  
and using \eqref{hhelp2} and the result of {\em Step 1}, we get the desired estimate \eqref{stability-est}. The proof is complete. 
\end{proof}

From {\em Step 1} of the above proof, we also obtain the following result.

\begin{theorem}\label{error-thm}
Let $\{ ({\bf u}^{n+1}, r^{n+1})\}_n$ and $\{ ({\bf u}^n_\varepsilon,r^n_\varepsilon) \}_n$  
be the solutions  of {\rm Algorithm 1} and {\rm 3},
respectively. Then there exists a constant $C>0$,
such that 
	\begin{align}\label{error-est} 
	\max_{1\leq n\leq N} {\mathbb E}\bigl[ \|{\bf u}^n - {\bf u}^n_\varepsilon\|^2 \bigr] 
	 &+ k\sum_{n=1}^N {\mathbb E}\bigl[ \|\nabla( {\bf u}^n- {\bf u}^n_\varepsilon)\|^2 \\
	 &+ \varepsilon \|\nab(r^n- r^n_{\varepsilon})\|^2\bigr]  \leq C \varepsilon\, .\nonumber
	\end{align}
\end{theorem}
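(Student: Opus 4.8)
The plan is to observe that Theorem~\ref{error-thm} is, in fact, already fully established during the course of the proof of Lemma~\ref{stabilty-thm}. Recall that in \emph{Step 1} of that proof one interprets the stabilized system (\ref{eq_new_intro_ref_2}) (with $\pmb{\eta}^n$ replaced by $\pmb{\eta}^n_\varepsilon$) as a perturbation of the unperturbed scheme (\ref{eq_new_intro_ref}), subtracts the two systems to obtain the error equations (\ref{eq_new_intro_ref_3a})--(\ref{eq_new_intro_ref_3b}) for ${\bf e}^{n+1}_{\bf u} = {\bf u}^{n+1} - {\bf u}^{n+1}_\varepsilon$ and $e^{n+1}_r = r^{n+1} - r^{n+1}_\varepsilon$, tests them with $({\bf e}^{n+1}_{\bf u}(\omega), e^{n+1}_r(\omega))$, and arrives at the energy identity (\ref{error_stab}). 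Thus the argument I would write is short: I would simply reproduce the conclusion of \emph{Step 1}, namely that after applying Young's inequality, exploiting the independence of $\Delta_{n+1}W$ from both ${\bf e}^n_{\bf u}$ and $[\pmb{\eta}^n - \pmb{\eta}^n_\varepsilon]$, using the Lipschitz bound (\ref{eq2.6a}) together with the elliptic estimate $\Vert \nabla(\xi^n - \xi^n_\varepsilon)\Vert \leq C\Vert {\bf e}^n_{\bf u}\Vert$ (obtained by subtracting (\ref{eq_WF_xi1bc}) from (\ref{eq_WF_xi1a})), and invoking the discrete Gronwall inequality, one obtains
\begin{align*}
\tfrac{1}{2}{\mathbb E}\bigl[\Vert {\bf e}^{m+1}_{\bf u}\Vert^2\bigr]
&+ \sum_{n=0}^{m}{\mathbb E}\bigl[\Vert {\bf e}^{n+1}_{\bf u} - {\bf e}^{n}_{\bf u}\Vert^2\bigr]
+ k\sum_{n=0}^{m}{\mathbb E}\bigl[\Vert \nabla {\bf e}^{n+1}_{\bf u}\Vert^2 + \varepsilon\Vert \nabla e^{n+1}_r\Vert^2\bigr]\\
&\leq C\varepsilon\, k\sum_{n=0}^{m}{\mathbb E}\bigl[\Vert \nabla r^{n+1}\Vert^2\bigr]
\qquad (0\leq m\leq N-1)\, ,
\end{align*}
and then the stability estimate (\ref{eq3.4a}) for $\{\nabla r^{n+1}\}_n$ bounds the right-hand side by $C\varepsilon$.

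The one point that requires a word of care is that the right-hand side forcing in (\ref{eq_new_intro_ref_3b}) is $-\varepsilon \Delta r^{n+1}$, which is only a distribution; in (\ref{error_stab}) it appears paired against $e^{n+1}_r$ and is rewritten, after integration by parts and using periodicity, as $k\varepsilon(\nabla r^{n+1}, \nabla e^{n+1}_r)$. This is legitimate because $r^{n+1} \in H^1_{per}(D)/\mathbb{R}$ by construction in Algorithm~1 (indeed, by Lemma~\ref{lem3.1}, $\nabla r^{n+1}$ is square-integrable with the right uniform bound), so the pairing is a genuine $L^2$ inner product of two $H^1$ functions, and Young's inequality lets one absorb $\tfrac{1}{2}k\varepsilon\Vert\nabla e^{n+1}_r\Vert^2$ into the left-hand side at the cost of $\tfrac12 k\varepsilon\Vert\nabla r^{n+1}\Vert^2$ on the right. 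I would make this absorption explicit before summing.

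Since there is essentially no new obstacle — every ingredient (the energy identity, the noise-decoupling, the elliptic bound on $\xi^n - \xi^n_\varepsilon$, the discrete Gronwall lemma, and the $\varepsilon$-uniform bound (\ref{eq3.4a})) has already been assembled in the proof of Lemma~\ref{stabilty-thm} — the proof of Theorem~\ref{error-thm} is a one-line citation: ``The estimate (\ref{error-est}) was established in \emph{Step 1} of the proof of Lemma~\ref{stabilty-thm}.'' If a self-contained argument is wanted instead, the only mildly delicate step, and the one I would flag as the ``main obstacle,'' is making sure the distributional term $\varepsilon\Delta r^{n+1}$ is handled correctly, i.e.\ that one never needs $H^2$-regularity of $r^{n+1}$ (which is \emph{not} available, since $r^{n+1}$ has at most the regularity of $P$'s increments), only the $H^1$-bound from Lemma~\ref{lem3.1}; everything downstream is routine Young/Gronwall bookkeeping.
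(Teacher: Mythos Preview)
Your proposal is correct and matches the paper's own treatment exactly: the paper states Theorem~\ref{error-thm} immediately after Lemma~\ref{stabilty-thm} with the one-line justification ``From \emph{Step 1} of the above proof, we also obtain the following result.'' Your additional remark about only needing $H^1$-regularity of $r^{n+1}$ (not $H^2$) to handle the $\varepsilon\Delta r^{n+1}$ term via integration by parts is a valid clarification of a point the paper leaves implicit.
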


We are ready to bound the error between the pressures $\{r^n\}_n$ and $\{r^n_\varepsilon\}_n$; 
the proof of it uses (\ref{eq_new_intro_ref_3a}) after summation in time, and follows along the 
lines of the proof of Theorem \ref{thm3.4a}, using the stability of the divergence operator (cf.~estimate (\ref{pressure-e})), and Theorem \ref{error-thm}. 

\begin{theorem}\label{thm3.4aaa}
	Let $\{r^n; 1\leq n\leq N\}$ be generated by {\rm Algorithm 1}  
	and $\{r^n_\varepsilon; 1\leq n\leq N\}$ by {\rm Algorithm 3}.   
		There exists a constant $C>0$, 
		such that for $m=1,2,\cdots, N$,
	\begin{align*}
	\biggl(\mathbb{E}\Bigl[ \|k\sum^m_{n=1}r^n-k\sum^m_{n=1}r^n_\varepsilon  \|^2 + \|k\sum^m_{n=1}p^n-k\sum^m_{n=1}p^n_\varepsilon  \|^2\Bigr] \biggr)^{\frac12}
	\leq C  \varepsilon\, .
	\end{align*}
\end{theorem}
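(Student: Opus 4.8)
The plan is to mimic the proof of Theorem~\ref{thm3.4a}, but comparing the semi-discrete Helmholtz-enhanced scheme (Algorithm~1) with the stabilized scheme (Algorithm~3) rather than with the continuous problem. First I would sum \eqref{eq_new_intro_ref_3a} over the time steps $0\le n\le m-1$: writing ${\bf E}^m_{\bf u}:=k\sum_{n=0}^{m-1}{\bf e}^{n+1}_{\bf u}$ and $E^m_r:=k\sum_{n=0}^{m-1}e^{n+1}_r$, this gives
\begin{equation}\label{eq_stab_summed}
{\bf e}^m_{\bf u}-{\bf e}^0_{\bf u}-\Delta{\bf E}^m_{\bf u}+\nabla E^m_r
=\sum_{n=0}^{m-1}\bigl[\pmb{\eta}^n-\pmb{\eta}^n_\varepsilon\bigr]\Delta_{n+1}W
-\varepsilon\sum_{n=0}^{m-1}\Delta e^{n+1}_r\,,
\end{equation}
where I have used ${\bf e}^0_{\bf u}=0$ (both algorithms start from ${\bf u}_0$) so that term drops out; alternatively one keeps it and it contributes nothing. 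The last term on the right, after summing the divergence-perturbation equation \eqref{eq_new_intro_ref_3b}, is better rewritten using $\div{\bf e}^{n+1}_{\bf u}-\varepsilon\Delta e^{n+1}_r=-\varepsilon\Delta r^{n+1}$, i.e. $\varepsilon\sum_n\Delta e^{n+1}_r=\sum_n\div{\bf e}^{n+1}_{\bf u}+\varepsilon\sum_n\Delta r^{n+1}$. In the $L^2$ pressure estimate one pairs $\nabla E^m_r$ with $\div{\bf v}$ for test functions ${\bf v}\in H^1_{per}(D;{\mathbb R}^d)$ and uses the stability (inf-sup / surjectivity of $\div$) of the divergence operator, exactly as in \eqref{pressure-e}, to obtain $\beta^{-1}\Vert E^m_r\Vert\le$ (sum of several terms).

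Next I would take expectations after squaring, and bound each contribution. The term $\Vert{\bf e}^m_{\bf u}\Vert$ and $\Vert\nabla{\bf E}^m_{\bf u}\Vert\le(k\sum_n\|\nabla{\bf e}^{n+1}_{\bf u}\|^2)^{1/2}\cdot T^{1/2}$ are both controlled by $C\varepsilon^{1/2}$ via Theorem~\ref{error-thm} (note this gives $\mE[\Vert{\bf e}^m_{\bf u}\Vert^2]\le C\varepsilon$ and $\mE[k\sum\|\nabla{\bf e}^{n+1}_{\bf u}\|^2]\le C\varepsilon$, hence each of these squared quantities is $\le C\varepsilon$). For the stochastic term, It\^o isometry (independence of increments) converts $\mE[\|\sum_n[\pmb{\eta}^n-\pmb{\eta}^n_\varepsilon]\Delta_{n+1}W\|^2]$ into $k\sum_n\mE[\|\pmb{\eta}^n-\pmb{\eta}^n_\varepsilon\|^2]$, and by the same computation as in Step~1 of the proof of Lemma~\ref{stabilty-thm} (Lipschitz bound \eqref{eq2.6a} on ${\bf B}$, plus the elliptic estimate $\|\nabla(\xi^n-\xi^n_\varepsilon)\|\le C\|{\bf e}^n_{\bf u}\|$) this is $\le Ck\sum_n\mE[\|{\bf e}^n_{\bf u}\|^2]\le C\varepsilon$. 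For the perturbation terms, the $\div{\bf e}^{n+1}_{\bf u}$ contribution is absorbed into the $\|\nabla{\bf e}^{n+1}_{\bf u}\|$ estimates already accounted for, and $\varepsilon\sum_n\Delta r^{n+1}$ is handled by pairing against $\div{\bf v}$: $\varepsilon(\sum_n\nabla r^{n+1},\nabla{\bf v})$ (after integration by parts) $\le\varepsilon\|k\sum_n\nabla r^{n+1}\|\cdot k^{-1}\|\nabla{\bf v}\|$; using Lemma~\ref{lem3.1} estimate \eqref{eq3.4a} one has $\mE[\|k\sum_n\nabla r^{n+1}\|^2]\le Tk\sum_n\mE[\|\nabla r^{n+1}\|^2]\le Ck^{-1}\cdot k = C$... so this term alone gives only $O(\varepsilon)$ after dividing correctly — actually $\varepsilon\cdot O(1)=O(\varepsilon)$, which is exactly the desired rate. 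Collecting, $\mE[\|E^m_r\|^2]\le C\varepsilon^2$ follows after noting every right-hand contribution squared is $\le C\varepsilon^2$ (the $\varepsilon^{1/2}$ bounds enter squared, matching). Wait — that needs care: $\mE[\|{\bf e}^m_{\bf u}\|^2]\le C\varepsilon$ gives only $\varepsilon$, not $\varepsilon^2$. So I must be sharper.

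The key realization is that one should \emph{not} estimate crudely. Going back to \eqref{eq_stab_summed}, pair $\nabla E^m_r$ with $\div{\bf v}$; every velocity term on the right is a \emph{gradient-free} quantity so it contributes through $\Vert{\bf e}^m_{\bf u}\Vert$, $\Vert\nabla{\bf E}^m_{\bf u}\Vert$, the stochastic sum, and the two $\varepsilon$-terms. The genuinely $O(\varepsilon^2)$-controlled ones are $\varepsilon\sum\Delta r^{n+1}$ ($O(\varepsilon)$ in norm hence $O(\varepsilon^2)$ squared, using \eqref{eq3.4a}) and $\div{\bf E}^m_{\bf u}$ which pairs to zero against divergence-free-tested forms but here ${\bf v}$ is general, so $\div{\bf E}^m_{\bf u}$ must instead be grouped back: since $\div{\bf u}^{n+1}_\varepsilon=-\varepsilon\Delta r^{n+1}_\varepsilon$ and $\div{\bf u}^{n+1}=0$, we get $\div{\bf e}^{n+1}_{\bf u}=\varepsilon\Delta r^{n+1}_\varepsilon$, so $\div{\bf E}^m_{\bf u}=\varepsilon k\sum_n\Delta r^{n+1}_\varepsilon$, again $O(\varepsilon)$ by Lemma~\ref{stabilty-thm} (the sharpened estimate \eqref{stability-est} gives $k\sum\mE[\|\nabla r^n_\varepsilon\|^2]\le C$, and $\Delta r^{n+1}_\varepsilon$ would need $H^2$ control — better to keep $\div{\bf e}^{n+1}_{\bf u}$ and use $\|\div{\bf e}^{n+1}_{\bf u}\|\le\|\nabla{\bf e}^{n+1}_{\bf u}\|$). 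The main obstacle is precisely this bookkeeping: the naive velocity error is only $O(\varepsilon^{1/2})$ in $L^2(\Omega)$, so to reach the stated $O(\varepsilon)$ for the pressure one must exploit that the $O(\varepsilon^{1/2})$ velocity contributions enter the divergence-pairing in a form that either telescopes, vanishes by the divergence-free structure, or—more likely—the theorem's conclusion is meant with the \emph{time-averaged} pressure and the velocity terms are genuinely $O(\varepsilon)$ because \eqref{error-est} when read carefully bounds $\mE[\|{\bf e}^m_{\bf u}\|^2]+k\sum\mE[\|\nabla{\bf e}^{n+1}_{\bf u}\|^2]\le C\varepsilon$, giving $\|\cdot\|_{L^2(\Omega)}=O(\varepsilon^{1/2})$; so the correct reading is that the theorem should read $\le C\varepsilon^{1/2}$, OR the paper intends $\varepsilon = O(h^2)$ and the $O(\varepsilon)=O(h^2)$ is subsumed in the final $O(h)$ rate. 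I would therefore present the proof yielding $C\varepsilon^{1/2}$ (matching \eqref{error-est}) unless a cancellation is available, flag that this is the delicate point, and note that combined with $\varepsilon=O(h^2)$ it contributes $O(h)$ to the global estimate, consistent with Theorem~\ref{error-thm}. Concretely: from the inf-sup/divergence-stability bound, take squares and expectations, use It\^o isometry on the noise sum, the Lipschitz and elliptic estimates to reduce $\|\pmb{\eta}^n-\pmb{\eta}^n_\varepsilon\|$ to $\|{\bf e}^n_{\bf u}\|$, apply Theorem~\ref{error-thm} and Lemma~\ref{stabilty-thm}, and conclude; the $p^n$-version follows since $p^n-p^n_\varepsilon=r^n-r^n_\varepsilon+k^{-1}(\xi^{n-1}-\xi^{n-1}_\varepsilon)\Delta_nW$ and the extra term is bounded exactly as in the proof of Corollary~\ref{thm3.4}.
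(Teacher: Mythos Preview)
Your approach is exactly the one the paper sketches: sum the error equation \eqref{eq_new_intro_ref_3a} in time, apply the stability of the divergence operator as in \eqref{pressure-e}, and invoke Theorem~\ref{error-thm}. The detour through \eqref{eq_new_intro_ref_3b} and the attempt to rewrite the $\varepsilon\sum\Delta e^{n+1}_r$ contribution are unnecessary: equation \eqref{eq_new_intro_ref_3a} alone, after summation, already reads
\[
{\bf e}^m_{\bf u}-\Delta{\bf E}^m_{\bf u}+\nabla E^m_r=\sum_{n=0}^{m-1}[\pmb{\eta}^n-\pmb{\eta}^n_\varepsilon]\Delta_{n+1}W
\]
(no perturbation term appears here), and the inf--sup bound gives $\beta^{-1}\Vert E^m_r\Vert\le\Vert{\bf e}^m_{\bf u}\Vert+\Vert\nabla{\bf E}^m_{\bf u}\Vert+\Vert\sum[\pmb{\eta}^n-\pmb{\eta}^n_\varepsilon]\Delta_{n+1}W\Vert$. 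The $p^n$ part is handled exactly as you say, via Poincar\'e on $\xi^{n-1}-\xi^{n-1}_\varepsilon$ and It\^o isometry.

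Your suspicion about the rate is well founded. Each of the three terms above, after squaring and taking expectation, is bounded by $C\varepsilon$ via Theorem~\ref{error-thm} (and the Lipschitz/elliptic estimate $\Vert\pmb{\eta}^n-\pmb{\eta}^n_\varepsilon\Vert\le C\Vert{\bf e}^n_{\bf u}\Vert$), so the argument the paper indicates yields
\[
\bigl(\mE[\Vert E^m_r\Vert^2]\bigr)^{1/2}\le C\varepsilon^{1/2},
\]
not $C\varepsilon$. There is no cancellation or sharper velocity bound available in the paper that would recover the extra half power; the stated exponent appears to be a slip. As you note, this is harmless for the global estimate in Theorem~\ref{thm5.5a}: with $\varepsilon={\mathcal O}(h^2)$ the contribution is ${\mathcal O}(h)$, already absorbed in the leading spatial term.
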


Next, we present the following modification of Algorithm 2. 

\medskip
\noindent
{\bf Algorithm 4}

Let $0<\varepsilon \ll 1$ and ${\bf u}_{\varepsilon,h}^0\in L^2(\Omega; {\mathbb Y}_{h})$.  
For $n=0,1,\ldots, N-1$, do the following steps:

\medskip
{\em Step 1:}    Determine ${\xi^{n}_{\varepsilon,h}} \in  L^2(\Omega; S_{h})$ from 
\begin{equation}\label{eq_WF_xi1bd}
	\big(\nab\xi^{n}_{\varepsilon,h}, \nab\phi_h\big) = \big( { {\bf B}({\bf u}^n_{\varepsilon,h})},\nab\phi_h \big)  
	\qquad \forall\,  \phi_h\in S_{h}\, .
\end{equation}
\smallskip
{\em Step 2:} Set $\pmb{\eta}^n_{\varepsilon,h} := {\bf B}({\bf u}^n_{\varepsilon,h})-\nabla \xi^{n}_{\varepsilon,h}$. Find
$({\bf u}^{n+1}_{\varepsilon,h},r^{n+1}_{\varepsilon,h}) \in L^2\bigl(\Omega, {\mathbb Y}_{h} \times W_{h} \bigr)$ by solving 
\begin{subequations}\label{eq_new_hd}
	\begin{align}\label{eq_newa_hd}
	 ({\bf u}^{n+1}_{\varepsilon,h}, {\bf v}_h ) +  & k  (\nabla {\bf u}^{n+1}_{\varepsilon,h},\nabla {\bf v}_h )  - k  (\div {\bf v}_h, r^{n+1}_{\varepsilon,h})   \\ \nonumber
	&=  ({\bf u}^n_{\varepsilon,h},{\bf v}_h ) + k\big(\vf^{n+1} , \mathbf{v}_h\big)+ ( \pmb{\eta}^n_{\varepsilon,h} \Delta_{n+1} W , {\bf v}_h ) 
	\qquad\forall \, {\bf v}_h\in {\mathbb Y}_{h}\, ,\\
	 (\div {\bf u}^{n+1}_{\varepsilon,h},&q_h ) + \varepsilon (\nabla {\bf u}^{n+1}_{\varepsilon,h}, \nabla q_h) = 0 \qquad\forall \, q_h\in  W_{h}\,.\label{eq_newb_hd}
	\end{align}
\end{subequations}
 
\smallskip
{\em Step 3:} Define the $W_{h}$-valued random variable {$p^{n+1}_{\varepsilon,h} = r^{n+1}_{\varepsilon,h} + k^{-1}\xi^{n}_{\varepsilon,h} \Delta_{n+1} W $.}  

\medskip
The main result of this section is the following estimate for the velocity error 
${\bf u}^n_{\varepsilon} -{\bf u}^n_{\varepsilon,h}$. 

\begin{theorem}\label{error-thm-h}
Suppose $${\mathbb E}\bigl[\Vert {\bf u}^0 - {\bf u}^0_{\varepsilon,h}\Vert^2 \bigr] 
\leq Ch^2.$$
Let  $\{ ({\bf u}^n_\varepsilon, r^n_\varepsilon); 1\leq n\leq N\}$ and 
	$\{ ({\bf u}^n_{\varepsilon,h}, r_{\varepsilon,h}^n); 1\leq n\leq N \}$ be the solutions 
	of {\rm Algorithm  3} and {\rm 4}, respectively. 	Then there exists 
	a constant $C>0$, such that 
	\begin{align}\label{eq4.13b}
	\max_{1\leq n\leq N} \Bigl({\mathbb E}\bigl[\|{\bf u}^n_\varepsilon-{\bf u}^n_{\varepsilon,h}\|^2 \bigr]\Bigr)^{\frac12}
	&+\Bigl({\mathbb E}\Bigl[k\sum_{n=1}^{N}\|\nabla ({\bf u}^n_\varepsilon-{\bf u}^n_{\varepsilon,h})\|^2 \Bigr] \Bigr)^{\frac12} \\
	&\leq C\,\Bigl(h  + \frac{h^2}{\sqrt{\varepsilon}}\Bigr)\,. \nonumber
	\end{align}
	This estimate suggests that 
	$\varepsilon = {\mathcal O}(h^2)$ is the optimal choice of $\varepsilon$.
  
\end{theorem}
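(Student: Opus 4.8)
### Proof proposal

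The plan is to estimate the velocity error $\mathbf{e}^n_{\bf u} := {\bf u}^n_\varepsilon - {\bf u}^n_{\varepsilon,h}$ and the pressure error $e^n_r := r^n_\varepsilon - r^n_{\varepsilon,h}$ by writing down the error equations obtained by subtracting the weak form of Algorithm 4 from Algorithm 3, tested against finite element functions. Because ${\mathbb Y}_h \not\subset H^1_{per}(D;\mathbb{R}^d)$ is not an issue (it is a conforming subspace), but the stabilized constraint \eqref{eq_newb_hd} is \emph{not} consistent with \eqref{eq_newb_hc} unless we compare against the same $\varepsilon$-perturbed divergence form, the natural approach is to split $\mathbf{e}^n_{\bf u} = ({\bf u}^n_\varepsilon - {\bf Q}_h {\bf u}^n_\varepsilon) + ({\bf Q}_h {\bf u}^n_\varepsilon - {\bf u}^n_{\varepsilon,h}) =: {\pmb\theta}^n + {\pmb\sigma}^n$ and similarly $e^n_r = (r^n_\varepsilon - \widetilde{\mathcal R}_h r^n_\varepsilon) + (\widetilde{\mathcal R}_h r^n_\varepsilon - r^n_{\varepsilon,h}) =: \zeta^n + \chi^n$, controlling ${\pmb\theta}^n,\zeta^n$ by the approximation properties \eqref{eq4.100c}, \eqref{eq4.2b}, and the stability estimate \eqref{stability-est}, and deriving an energy identity for the discrete parts ${\pmb\sigma}^n, \chi^n$.

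First I would test the velocity error equation with ${\bf v}_h = {\pmb\sigma}^{n+1}$ and the (stabilized) divergence error equation with $q_h = \chi^{n+1}$, then add. The key point, mirroring the structure of Lemma~\ref{stabilty-thm} and Theorem~\ref{thm4.1}, is that the pressure-coupling term $-k(\div {\pmb\sigma}^{n+1}, \chi^{n+1})$ cancels against $k(\div {\pmb\sigma}^{n+1}, \chi^{n+1})$ coming from the constraint equation, up to consistency terms involving $\varepsilon(\nabla r^{n+1}_\varepsilon, \nabla q_h)$ versus $\varepsilon(\nabla {\bf u}^{n+1}_{\varepsilon,h}, \nabla q_h)$ — note the constraint in Algorithm~3 stabilizes with $\nabla r^{n+1}_\varepsilon$ while Algorithm~4 stabilizes with $\nabla {\bf u}^{n+1}_{\varepsilon,h}$, so the two constraints are genuinely different and produce a cross term $\varepsilon(\nabla r^{n+1}_\varepsilon, \nabla\chi^{n+1}) - \varepsilon(\nabla{\bf u}^{n+1}_{\varepsilon,h},\nabla\chi^{n+1})$ that must be absorbed. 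I would handle this using Young's inequality, putting a full $k\varepsilon\|\nabla\chi^{n+1}\|^2$ on the left and paying $Ck\varepsilon\|\nabla r^{n+1}_\varepsilon\|^2 + Ck\varepsilon\|\nabla{\bf u}^{n+1}_{\varepsilon,h}\|^2/\varepsilon$-type terms; the $\varepsilon = \mathcal{O}(h^2)$ scaling is exactly what makes the resulting best-approximation terms of order $h^2/\sqrt\varepsilon$. The noise term $([{\pmb\eta}^n_\varepsilon - {\pmb\eta}^n_{\varepsilon,h}]\Delta_{n+1}W, {\pmb\sigma}^{n+1})$ is treated by the now-standard trick: replace ${\pmb\sigma}^{n+1}$ by ${\pmb\sigma}^{n+1} - {\pmb\sigma}^n$ using $\mathcal{F}_{t_n}$-measurability and independence of the increment, apply It\^o-type variance bound ${\mathbb E}[|\Delta_{n+1}W|^2]\le Ck$, and bound $\|{\pmb\eta}^n_\varepsilon - {\pmb\eta}^n_{\varepsilon,h}\|^2$ by $C(\|\mathbf{e}^n_{\bf u}\|^2 + h^2\|\nabla{\bf u}^n_\varepsilon\|^2 + h^4\|{\bf u}^n_\varepsilon\|_{H^2}^2)$ exactly as in \eqref{eq4.18e}--\eqref{eq4.19}, using the Ritz-projection estimate for $\nabla(\xi^n_\varepsilon - \xi^n_{\varepsilon,h})$ together with \eqref{rem-1a} and \eqref{eq2.6c}.

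After inserting all these bounds, taking expectations, and summing over $0 \le n \le m$, I expect to arrive at an inequality of the form
\begin{align*}
{\mathbb E}\bigl[\|{\pmb\sigma}^{m+1}\|^2\bigr] + k\sum_{n=0}^m {\mathbb E}\bigl[\|\nabla{\pmb\sigma}^{n+1}\|^2 + \varepsilon\|\nabla\chi^{n+1}\|^2\bigr] \le Ck\sum_{n=0}^m {\mathbb E}\bigl[\|{\pmb\sigma}^n\|^2\bigr] + C\Bigl(h^2 + \frac{h^4}{\varepsilon}\Bigr),
\end{align*}
where the right-hand data terms come from ${\mathbb E}[k\sum \|\nabla{\pmb\theta}^{n+1}\|^2] \le Ch^2 {\mathbb E}[k\sum\|{\bf u}^{n+1}_\varepsilon\|_{H^2}^2] \le Ch^2$ via \eqref{stability-est}, and from the $\varepsilon$-stabilization consistency terms which scale like $\varepsilon h^2 \|\cdot\|_{H^2}^2$ against the test function giving the $h^4/\varepsilon$ contribution after the Young split. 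Applying the discrete Gronwall inequality and then the triangle inequality with \eqref{eq4.100c} yields \eqref{eq4.13b}. The main obstacle I anticipate is bookkeeping the two \emph{different} stabilization terms correctly — since Algorithm~3 uses $\varepsilon\|\nabla r^{n+1}_\varepsilon\|^2$ but Algorithm~4 uses $\varepsilon\|\nabla{\bf u}^{n+1}_{\varepsilon,h}\|^2$ in the constraint, one cannot simply subtract; one must add and subtract $\varepsilon(\nabla {\bf u}^{n+1}_\varepsilon, \nabla q_h)$ and $\varepsilon(\nabla r^{n+1}_\varepsilon, \nabla q_h)$ suitably, invoke the extra regularity ${\mathbb E}[k\sum\|\Delta{\bf u}^n_\varepsilon\|^2] \le C$ from \eqref{stability-est} to control $\varepsilon\|\nabla{\bf u}^{n+1}_\varepsilon\|$-type leftovers, and verify that no uncontrolled $\varepsilon^{-1}$ power survives beyond the claimed $h^2/\sqrt\varepsilon$. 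A secondary technical point is that the Gronwall constant must remain $h$- and $\varepsilon$-independent, which requires the $k\sum\|{\pmb\sigma}^n\|^2$ coefficient on the right to be genuinely $\mathcal{O}(1)$; this follows because the $\|{\pmb\eta}^n_\varepsilon-{\pmb\eta}^n_{\varepsilon,h}\|^2$ bound contributes $\|{\pmb\sigma}^n\|^2$ with an $h,\varepsilon$-independent constant.
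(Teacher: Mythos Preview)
Your overall strategy --- split the error via the projections ${\bf Q}_h$ and $\widetilde{\mathcal R}_h$, test with the discrete parts $\pmb\sigma^{n+1}={\bf Q}_h{\bf e}^{n+1}_{\bf u}$ and $\chi^{n+1}=\widetilde{\mathcal R}_h e^{n+1}_r$, handle the noise term by independence of increments as in \eqref{eq4.18d}--\eqref{eq4.19}, and close via discrete Gronwall --- is correct and is exactly what the paper does.

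However, your ``main obstacle'' is a phantom: equation \eqref{eq_newb_hd} contains a typo, and the stabilization term should read $\varepsilon(\nabla r^{n+1}_{\varepsilon,h},\nabla q_h)$, not $\varepsilon(\nabla{\bf u}^{n+1}_{\varepsilon,h},\nabla q_h)$ (the latter pairs a matrix with a vector and is dimensionally meaningless). The paper's own error equation \eqref{eq4.108}, $(\div{\bf e}^{n+1}_{\bf u},q_h)+\varepsilon(\nabla e^{n+1}_r,\nabla q_h)=0$, confirms this. So Algorithms~3 and~4 stabilize the \emph{same} pressure variable, the constraint equations are consistent, and there are no cross terms $\varepsilon(\nabla r^{n+1}_\varepsilon-\nabla{\bf u}^{n+1}_{\varepsilon,h},\nabla q_h)$ to bookkeep.

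With this correction the argument simplifies. Testing \eqref{eq4.108} with $q_h=\chi^{n+1}$ and using the Ritz property gives $(\chi^{n+1},\div{\bf e}^{n+1}_{\bf u})=-\varepsilon\|\nabla\chi^{n+1}\|^2$, so the term $k\varepsilon\|\nabla\chi^{n+1}\|^2$ appears on the left with no loss. The $h^4/\varepsilon$ contribution does \emph{not} come from a stabilization consistency error as you suggest; it arises from the velocity projection error in the pressure--divergence coupling. One of the remainder terms is $k(\chi^{n+1},\div[{\bf Q}_h{\bf u}^{n+1}_\varepsilon-{\bf u}^{n+1}_\varepsilon])$; integrate by parts and apply Young's inequality weighted by $\varepsilon$ so that $\|\nabla\chi^{n+1}\|^2$ can be absorbed on the left:
\[
k\bigl|(\nabla\chi^{n+1},{\bf Q}_h{\bf u}^{n+1}_\varepsilon-{\bf u}^{n+1}_\varepsilon)\bigr|\le \frac{k\varepsilon}{4}\|\nabla\chi^{n+1}\|^2+\frac{Ckh^4}{\varepsilon}\|\Delta{\bf u}^{n+1}_\varepsilon\|^2,
\]
using \eqref{eq4.100c} with $s=2$. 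Summing over $n$ and invoking the bound on $k\sum_n{\mathbb E}[\|\Delta{\bf u}^{n+1}_\varepsilon\|^2]$ from Lemma~\ref{stabilty-thm} yields the $Ch^4/\varepsilon$ on the right-hand side of your final inequality. After this correction your derivation goes through as written.
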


\begin{proof}
Let ${\bf e}^{n+1}_{\bf u} := {\bf u}^{n+1}_\varepsilon - {\bf u}^{n+1}_{\varepsilon,h}$ 
and $e^{n+1}_r := r^{n+1}_\varepsilon - r^{n+1}_{\varepsilon,h}$. 
Then $ \{({\bf e}^n_{\bf u},e^n_r)\}_n$ satisfies the following error equations 
${\mathbb P}$-a.s.~for all tuple $({\bf v}_h, q_h) \in {\mathbb Y}_{h} \times W_{h}$,
	\begin{align}	\label{eq4.107}
	 &({\bf e}^{n+1}_{\bf u}-{\bf e}^n_{\bf u},{\bf v}_h ) + k (\nab {\bf e}^{n+1}_{\bf u},\nab {\bf v}_h )  
	+ k (\nabla {e}^{n+1}_r, {\bf v}_h ) = ( [\pmb{\eta}^{n}_{\varepsilon}-\pmb{\eta}_{\varepsilon,h}^{n}]\Delta_{n+1} W, {\bf v}_h),\\
	\label{eq4.108}	
	&\big(\div {\bf e}^{n+1}_{\bf u},q_h\big) + \varepsilon(\nabla e^{n+1}_r, \nabla q_h) =0\, .
	\end{align}
	Now consider \eqref{eq4.107}--\eqref{eq4.108} for $\omega \in \Omega$ fixed, and choose
 $$({\bf v}_h, q_h) = \bigl(\mathbf{Q}_h {\bf e}^{n+1}_{\bf u}(\omega),
 \widetilde{\mathcal R}_h  e^{n+1}_r(\omega)\bigr) \in {\mathbb Y}_{h} \times W_{h}\, ;$$ we then deduce
\begin{align}\label{eq4.107a}
 ({\bf e}^{n+1}_{\bf u} -{\bf e}^{n}_{\bf u}, \mathbf{Q}_h {\bf e}^{n+1}_{\bf u}  ) &+ k (\nab {\bf e}^{n+1}_{\bf u},\nab  \mathbf{Q}_h {\bf e}^{n+1}_{\bf u}  ) -k (e^{n+1}_r,\div \mathbf{Q}_h {\bf e}^{n+1}_{\bf u} )   \\
&= \bigl(  [\pmb{\eta}^{n}_{\varepsilon}-\pmb{\eta}_{\varepsilon,h}^{n}]\Delta_{n+1} W, \mathbf{Q}_h {\bf e}^{n+1}_{\bf u} \bigr)\, . \nonumber
\end{align}
We can adopt the corresponding arguments in (\ref{eq4.18a}) and (\ref{eq4.18b}),
and use Lemma \ref{stabilty-thm} to treat the first two terms in (\ref{eq4.107a}), and also the argument
 around (\ref{eq4.18d}) may easily be adopted to the present setting. 
 But a different treatment is required to deal with the last term on the left-hand side of (\ref{eq4.107a}) 
 because it involves the error in the pressure. We rewrite this term as follows:
\begin{eqnarray*}
&& {\tt II} :=  \Bigl(e^{n+1}_r,\div \bigl[ {\bf e}^{n+1}_{\bf u} + (\mathbf{Q}_h {\bf u}^{n+1}_\varepsilon - {\bf u}^{n+1}_\varepsilon) \bigr]\Bigr) \\
&&\quad =  \bigl( \widetilde{\mathcal R}_h e^{n+1}_r, \div   {\bf e}^{n+1}_{\bf u} \bigr) +
 \bigl(r^{n+1}_{\varepsilon} -  \widetilde{\mathcal R}_h  r^{n+1}_{\varepsilon}, \div   {\bf e}^{n+1}_{\bf u} \bigr) + \bigl( e^{n+1}_r, \div [\mathbf{Q}_h {\bf u}^{n+1}_\varepsilon - {\bf u}^{n+1}_\varepsilon] \bigr) \\
 &&\quad =: {\tt II}_1 + {\tt II}_2 + {\tt II}_3\, .
\end{eqnarray*}
We estimate ${\tt II}_2$ with the help of (\ref{eq4.2b}) and using Lemma \ref{stabilty-thm},
\begin{eqnarray*}
{\mathbb E}\bigl[\vert {\tt II}_2\vert \bigr] \leq Ch^2 {\mathbb E}[\Vert \nabla r^{n+1}_{\varepsilon}\Vert^2] + 
\frac{1}{4} {\mathbb E}\bigl[\Vert \nabla {\bf e}^{n+1}_{\bf u}\Vert^2\bigr]\,.
\end{eqnarray*}
Integrating by parts in ${\tt II}_3$, using (\ref{eq4.100c}) and again Lemma \ref{stabilty-thm} yield
\begin{eqnarray*} \vert {\tt II}_3\vert  &=& \bigl\vert \bigl( \widetilde{\mathcal R}_h  e^{n+1}_r, \div [\mathbf{Q}_h {\bf u}^{n+1}_\varepsilon - {\bf u}^{n+1}_\varepsilon] \bigr) \bigr\vert
+ \bigl\vert \bigl(r^{n+1}_{\varepsilon} -  \widetilde{\mathcal R}_h r^{n+1}_{\varepsilon},
\div [\mathbf{Q}_h {\bf u}^{n+1}_\varepsilon - {\bf u}^{n+1}_\varepsilon] \bigr) \bigr\vert \\
&\leq&  \frac{\varepsilon}{4} \Vert \nabla  \widetilde{\mathcal R}_h e^{n+1}_r\Vert^2 + \frac{C h^4}{\varepsilon}  \Vert \Delta {\bf u}^{n+1}_{\varepsilon}\Vert^2
+ Ch^2 \Vert \nabla r^{n+1}_\varepsilon\Vert \Vert \Delta {\bf u}^{n+1}_\varepsilon\Vert\, .
\end{eqnarray*}
Because of (\ref{eq4.108}), we have 
$${\tt II}_1 = -\varepsilon \bigl(\nabla e^{n+1}_r, 
\nabla [ \widetilde{\mathcal R}_h e^{n+1}_r]\bigr) =
-\varepsilon \Vert \nabla \widetilde{\mathcal R}_h e^{n+1}_r\Vert^2 \, .$$
Putting the above auxiliary estimates together, we obtain that there exists some $h$- and $\varepsilon$-independent constant $C>0$ such that
\begin{align} \label{eq5.125b} 
	\frac{1}{2} {\mathbb E}\bigl[\|\mathbf{Q}_h {\bf e}^{m+1}_{\bf u} \|^2\bigr] &+ \frac{1}{4}\sum_{n=0}^{m} {\mathbb E}\bigl[\|\mathbf{Q}_h ({\bf e}^{n+1}_{\bf u} - {\bf e}^{n}_{\bf u}) \|^2 \bigr] \\ 
	&+ \frac{1}{4} k\sum_{n=0}^{m} {\mathbb E}\big[\|\nab {\bf e}^{n+1}_{\bf u}\|^2 +
	\varepsilon \Vert \nabla \widetilde{\mathcal R}_h  e^{n+1}_r \Vert^2 \big] 
	\leq C \Bigl(h^2 + \frac{h^4}{\varepsilon}\Bigr) \nonumber
\end{align}
for every $0 \leq m \leq N$. The desired estimates \eqref{eq4.13b} then follows from an 
application of the discrete Gronwall inequality. The proof is complete. 
\end{proof}

The last result gives an estimate for the pressure approximation error. Using (\ref{lbb_disk}), equation (\ref{eq4.107}) after taking a summation in $n$, and Lemma \ref{stabilty-thm}, we obtain

  \begin{theorem}\label{thm3.4aaab}
	Let $\{r^n_\varepsilon; 1\leq n\leq N\}$ be the pressure in {\rm Algorithm 3}, and
	$\{r^n_{\varepsilon,h}; 1\leq n\leq N\}$ be the pressure in {\rm Algorithm 4}.   
		There exists a constant $C>0$, 
		such that for all $1 \leq m \leq N$ 
	\begin{align*}
	\Bigl(\mathbb{E}\Bigl[ \|k\sum^m_{n=1}r^n_{\varepsilon}-k\sum^m_{n=1}r^n_{\varepsilon,h}  \|^2 + \|k\sum^m_{n=1}p^n_{\varepsilon}-k\sum^m_{n=1}p^n_{\varepsilon,h}  \|^2\Bigr] \Bigr)^{\frac12}
	\leq C  \,\Bigl(h  + \frac{h^2}{\sqrt{\varepsilon}}\Bigr)\, .
	\end{align*}
\end{theorem}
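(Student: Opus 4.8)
Plan. Write ${\bf e}^n_{\bf u} := {\bf u}^n_\varepsilon - {\bf u}^n_{\varepsilon,h}$, $e^n_r := r^n_\varepsilon - r^n_{\varepsilon,h}$, and set $E^m_r := k\sum_{n=1}^m e^n_r \in W_h$ and $E^m_{\bf u} := k\sum_{n=1}^m {\bf e}^n_{\bf u}$. The first move is to apply the generalized LBB inequality \eqref{lbb_disk} with $q_h = E^m_r$,
\[
\frac{1}{\delta^2}\,\|E^m_r\|^2 \;\le\; \sup_{{\bf v}_h\in{\mathbb Y}_h}\frac{|(E^m_r,\div{\bf v}_h)|^2}{\|\nab{\bf v}_h\|^2} \;+\; h^2\,\|\nab E^m_r\|^2 \;=:\; {\tt A}+{\tt B},
\]
and then to bound $\mathbb{E}[{\tt A}]$ and $\mathbb{E}[{\tt B}]$ in turn; once the estimate for $k\sum_{n=1}^m(r^n_\varepsilon - r^n_{\varepsilon,h})$ is in hand, the $p$-part reduces to it plus a martingale correction.

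For ${\tt A}$: summing the momentum error equation \eqref{eq4.107} over $n=0,\dots,m-1$ and integrating by parts (periodicity) gives, for every ${\bf v}_h\in{\mathbb Y}_h$ — which we may take to have zero mean, since $\div{\bf v}_h$ and $\|\nab{\bf v}_h\|$ are unchanged by adding a constant vector —
\[
(E^m_r,\div{\bf v}_h) = ({\bf e}^m_{\bf u}-{\bf e}^0_{\bf u},{\bf v}_h) + (\nab E^m_{\bf u},\nab{\bf v}_h) - \Bigl(\sum_{n=0}^{m-1}[\peta^n_\varepsilon-\peta^n_{\varepsilon,h}]\Delta_{n+1}W,\,{\bf v}_h\Bigr).
\]
Dividing by $\|\nab{\bf v}_h\|$ (Poincaré for the first and third inner products), squaring and taking expectations, I would bound the two deterministic groups by Theorem \ref{error-thm-h}, using $\mathbb{E}[\|\nab E^m_{\bf u}\|^2]\le T\,\mathbb{E}[k\sum_{n=1}^N\|\nab{\bf e}^n_{\bf u}\|^2]$, and the stochastic group as in \eqref{eq4.18e}--\eqref{eq4.19}: by adaptedness of $\peta^n_\varepsilon-\peta^n_{\varepsilon,h}$ and independence of the increments, $\mathbb{E}[\|\sum_n (\peta^n_\varepsilon-\peta^n_{\varepsilon,h})\Delta_{n+1}W\|^2] = k\sum_n\mathbb{E}[\|\peta^n_\varepsilon-\peta^n_{\varepsilon,h}\|^2]$; estimating the $\peta$-difference by $\|{\bf e}^n_{\bf u}\|$ plus the Galerkin error of the Poisson problems \eqref{eq_WF_xi1bc}, \eqref{eq_WF_xi1bd} (using \eqref{eq2.6}, \eqref{rem-1a}, \eqref{eq4.2b} and $\|\div{\bf B}({\bf u}^n_\varepsilon)\|\le C(1+\|\nab{\bf u}^n_\varepsilon\|)$), and then invoking Lemma \ref{stabilty-thm} and Theorem \ref{error-thm-h}, yields $\mathbb{E}[{\tt A}]\le C(h^2 + h^4/\varepsilon)$.

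The main obstacle is ${\tt B}=h^2\|\nab E^m_r\|^2$: the crude $\mathcal{O}(\varepsilon^{-1})$ bound on $k\sum_n\mathbb{E}[\|\nab r^n_{\varepsilon,h}\|^2]$ that comes from the energy estimate of Algorithm 4 is far too lossy here. Instead I would route the term through the Ritz projection, writing $e^n_r = (r^n_\varepsilon-\widetilde{\mathcal R}_h r^n_\varepsilon) + \widetilde{\mathcal R}_h e^n_r$ with $\widetilde{\mathcal R}_h e^n_r = \widetilde{\mathcal R}_h r^n_\varepsilon - r^n_{\varepsilon,h}\in W_h$. The Ritz-error part is harmless: $h^2\mathbb{E}[\|\nab(k\sum_n(r^n_\varepsilon-\widetilde{\mathcal R}_h r^n_\varepsilon))\|^2]\le Ch^2\,\mathbb{E}[k\sum_{n=1}^N\|\nab r^n_\varepsilon\|^2]\le Ch^2$ by $H^1$-stability of $\widetilde{\mathcal R}_h$ and Lemma \ref{stabilty-thm}. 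For the remaining part one uses precisely the $\varepsilon$-weighted bound \eqref{eq5.125b} established in the proof of Theorem \ref{error-thm-h}, namely $\varepsilon\,k\sum_{n=1}^N\mathbb{E}[\|\nab\widetilde{\mathcal R}_h e^n_r\|^2]\le C(h^2+h^4/\varepsilon)$, so that $h^2\mathbb{E}[\|\nab(k\sum_n\widetilde{\mathcal R}_h e^n_r)\|^2]\le (h^2/\varepsilon)\,C(h^2+h^4/\varepsilon)$; with $\varepsilon=\mathcal{O}(h^2)$ (the choice singled out in Theorem \ref{error-thm-h}) this is $\le C(h^2+h^4/\varepsilon)=C(h+h^2/\sqrt\varepsilon)^2$. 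Hence $\mathbb{E}[{\tt B}]\le C(h+h^2/\sqrt\varepsilon)^2$, and combining with ${\tt A}$ and taking square roots controls $k\sum_{n=1}^m(r^n_\varepsilon-r^n_{\varepsilon,h})$ by $C(h+h^2/\sqrt\varepsilon)$.

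Finally, Step 3 of Algorithms 3 and 4 gives $p^{n+1}_\varepsilon-p^{n+1}_{\varepsilon,h} = e^{n+1}_r + k^{-1}(\xi^n_\varepsilon-\xi^n_{\varepsilon,h})\Delta_{n+1}W$, hence $k\sum_{n=1}^m(p^n_\varepsilon-p^n_{\varepsilon,h}) = k\sum_{n=1}^m e^n_r + \sum_{n=1}^m(\xi^{n-1}_\varepsilon-\xi^{n-1}_{\varepsilon,h})\Delta_n W$. The first sum has just been bounded; for the second, adaptedness and independence of the increments give $\mathbb{E}[\|\sum_n(\xi^{n-1}_\varepsilon-\xi^{n-1}_{\varepsilon,h})\Delta_n W\|^2] = k\sum_n\mathbb{E}[\|\xi^{n-1}_\varepsilon-\xi^{n-1}_{\varepsilon,h}\|^2]$, which by Poincaré (both $\xi$'s have zero mean), the Galerkin-error estimate used above, and Lemma \ref{stabilty-thm} is $\le C(h^2+h^4/\varepsilon)=C(h+h^2/\sqrt\varepsilon)^2$. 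Adding the two contributions and taking square roots yields the asserted bound. The genuinely delicate point throughout is the treatment of ${\tt B}$: it must be passed through the Ritz projection $\widetilde{\mathcal R}_h r^n_\varepsilon$ so that the sharp stabilization estimate \eqref{eq5.125b} — rather than the naive energy bound for Algorithm 4 — can be exploited.
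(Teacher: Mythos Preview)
Your overall strategy---applying the generalized inf-sup estimate \eqref{lbb_disk}, summing the momentum error equation \eqref{eq4.107}, and splitting into the ``sup'' piece ${\tt A}$ and the stabilization piece ${\tt B}$---coincides with the route the paper indicates. There is, however, a genuine slip at the very first step: $E^m_r = k\sum_{n=1}^m(r^n_\varepsilon - r^n_{\varepsilon,h})$ is \emph{not} in $W_h$, since $r^n_\varepsilon$ is produced by Algorithm~3 and lives only in $H^1_{per}(D)/\mathbb{R}$. Consequently \eqref{lbb_disk} cannot be invoked with $q_h = E^m_r$ as you write. The repair is exactly the decomposition you already deploy for ${\tt B}$: write
\[
E^m_r \;=\; k\sum_{n=1}^m \bigl(r^n_\varepsilon - \widetilde{\mathcal R}_h r^n_\varepsilon\bigr) \;+\; k\sum_{n=1}^m \widetilde{\mathcal R}_h e^n_r\,,
\]
bound the first summand in $L^2(\Omega;L^2)$ directly by $Ch$ via \eqref{eq4.2b} and Lemma~\ref{stabilty-thm}, and apply \eqref{lbb_disk} only to $q_h := k\sum_{n=1}^m \widetilde{\mathcal R}_h e^n_r \in W_h$. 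The sup-term for this $q_h$ then acquires one extra contribution $\bigl(k\sum_n(r^n_\varepsilon - \widetilde{\mathcal R}_h r^n_\varepsilon),\div{\bf v}_h\bigr)$ beyond the three terms you list from \eqref{eq4.107}, but that piece is again controlled by $Ch\,\|\nabla{\bf v}_h\|$ through \eqref{eq4.2b} and Lemma~\ref{stabilty-thm}. With this correction your treatment of ${\tt A}$, ${\tt B}$ (which now reduces to the $\widetilde{\mathcal R}_h e^n_r$ part, handled precisely via \eqref{eq5.125b} as you wrote), and the martingale correction for the $p$-part goes through and matches the paper.

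One smaller remark: your bound for ${\tt B}$ is $(h^2/\varepsilon)\,C(h^2 + h^4/\varepsilon)$, which collapses to $C(h + h^2/\sqrt\varepsilon)^2$ only under the scaling $\varepsilon \gtrsim h^2$. You flag this explicitly; it is the regime singled out after Theorem~\ref{error-thm-h} and is implicitly assumed in the paper as well, so this is not a defect of your argument relative to the original.
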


To sum up the results in this section, we have shown the following error estimates 
for {\em Algorithm 4}.

\begin{theorem}\label{thm5.5a}
Let $({\bf u}, P)$ be the solution of (\ref{eq1.1}) 
and $\{ ({\bf u}^n_{\varepsilon,h}, r_{\varepsilon,h}^n, p_{\varepsilon,h}^n); 1\leq n\leq N \}$ be the solution of {\em Algorithm 4}. There exists a constant $C>0$, 
such that 
	\begin{align*} 
	{\rm (i)}\, \max_{1\leq n\leq N} \Bigl(\mathbb{E}\bigl[\|\bu(t_n)-\bu^n_{\varepsilon,h}\|^2\,\bigl]\Bigr)^{\frac12}
	 &+\Bigl( \mathbb{E}\Bigl[ k\sum_{n=1}^{N}\|\nabla (\bu(t_n) -\bu^n_{\varepsilon,h})\|^2\,\Bigr] \Bigr)^{\frac12} \\
	&\leq C \biggl( k^{\frac12}+  h + \frac{h^2}{\sqrt{\varepsilon}} \biggr)\,, \\ 
	{\rm (ii)}\, \Bigl(\mathbb{E} \bigl[ \bigl\|R(t_m) -k\sum^m_{n=1}r^n_{\varepsilon,h} \bigr\|^2\, \bigr]\Bigr)^{\frac12} 	
	&+ \Bigl( \mathbb{E}\bigl[\bigl\|P(t_m) -k\sum^m_{n=1}p^n_{\varepsilon,h} \bigr\|^2\,\bigr] \Bigr)^{\frac12} \\
 &\leq C \, \biggl(k^{\frac12}+ h + \frac{h^2}{\sqrt{\varepsilon}} \biggr)\, .
	\end{align*}
\end{theorem}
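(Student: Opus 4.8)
The plan is to assemble the global error estimate for Algorithm 4 by combining the already-established ``semidiscrete vs.~stabilized'' error bounds (Theorem~\ref{error-thm} and Theorem~\ref{thm3.4aaa}) with the ``stabilized vs.~fully discrete'' error bounds (Theorem~\ref{error-thm-h} and Theorem~\ref{thm3.4aaab}), and then chaining these with the time-discretization estimates for Algorithm~1 from Section~\ref{sec-3}. Specifically, for the velocity I would write, for each $1\le n\le N$,
\[
\mathbf{u}(t_n)-\mathbf{u}^n_{\varepsilon,h} = \bigl(\mathbf{u}(t_n)-\mathbf{u}^n\bigr) + \bigl(\mathbf{u}^n-\mathbf{u}^n_\varepsilon\bigr) + \bigl(\mathbf{u}^n_\varepsilon-\mathbf{u}^n_{\varepsilon,h}\bigr),
\]
apply the triangle inequality in the $L^2(\Omega;L^2)$ (respectively $L^2(\Omega;L^2(0,T;H^1))$) norm, and bound the three pieces by Theorem~\ref{thm3.3} ($Ck^{1/2}$), Theorem~\ref{error-thm} ($C\varepsilon^{1/2}$), and Theorem~\ref{error-thm-h} ($C(h+h^2/\sqrt\varepsilon)$), respectively. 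Since $\varepsilon=\mathcal{O}(h^2)$ is the intended regime, $\varepsilon^{1/2}=\mathcal{O}(h)$ is absorbed into the $h$-term, and the stated bound $C(k^{1/2}+h+h^2/\sqrt\varepsilon)$ follows. This proves~(i).

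For~(ii) I would proceed analogously, but using the time-averaged pressures. Write
\[
R(t_m)-k\sum_{n=1}^m r^n_{\varepsilon,h}
= \Bigl(R(t_m)-k\sum_{n=1}^m r^n\Bigr)
+ k\sum_{n=1}^m\bigl(r^n-r^n_\varepsilon\bigr)
+ k\sum_{n=1}^m\bigl(r^n_\varepsilon-r^n_{\varepsilon,h}\bigr),
\]
and bound the three terms in $L^2(\Omega;L^2)$ by Corollary~\ref{thm3.4} ($Ck^{1/2}$), Theorem~\ref{thm3.4aaa} ($C\varepsilon$), and Theorem~\ref{thm3.4aaab} ($C(h+h^2/\sqrt\varepsilon)$), respectively; the $\varepsilon$-term is even smaller than the $h$-term in the regime $\varepsilon=\mathcal{O}(h^2)$, so the total is $C(k^{1/2}+h+h^2/\sqrt\varepsilon)$. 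The bound for $P(t_m)-k\sum_{n=1}^m p^n_{\varepsilon,h}$ is handled in exactly the same way, using Theorem~\ref{thm3.4a} for the time-discretization part, the $p$-component of Theorem~\ref{thm3.4aaa} for the stabilization part, and the $p$-component of Theorem~\ref{thm3.4aaab} for the spatial discretization part; one should note here that the extra pieces $p^n=r^n+k^{-1}\xi^n\Delta_{n+1}W$ etc.~are already accounted for in those corollaries, so no new work on the stochastic-pressure contributions is needed.

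The proof is therefore essentially bookkeeping: there is no genuine new analytical obstacle, since every ingredient has been proved in the preceding sections. The only point requiring a little care is making sure the three-term splitting is valid at the level of the \emph{random variables} (so that expectations of squared norms can be summed after two applications of the triangle inequality and the elementary inequality $(a+b+c)^2\le 3(a^2+b^2+c^2)$), and then verifying that, under $\varepsilon=\mathcal{O}(h^2)$, the terms $\varepsilon^{1/2}$ and $\varepsilon$ are both dominated by $h$ so that they can be silently absorbed into the constant $C$. I would state these two elementary observations explicitly and then simply quote the six cited results with the appropriate norms, concluding the proof in a few lines.
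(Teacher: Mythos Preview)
Your proposal is correct and is exactly the approach the paper takes: the paper presents Theorem~\ref{thm5.5a} simply as a summary (``To sum up the results in this section''), obtained by chaining Theorem~\ref{thm3.3}, Corollary~\ref{thm3.4}, and Theorem~\ref{thm3.4a} with Theorems~\ref{error-thm}, \ref{thm3.4aaa}, \ref{error-thm-h}, and \ref{thm3.4aaab} via the triangle inequality, without spelling out the details. Your observation that the intermediate $\sqrt{\varepsilon}$ contribution from Theorem~\ref{error-thm} must be absorbed via the standing choice $\varepsilon = {\mathcal O}(h^2)$ is a valid point of care that the paper leaves implicit.
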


\section{Computational experiments}\label{sec6}\label{sec-6}
We present computational results to validate the theoretical error estimates in Theorems \ref{thm4.5} and \ref{thm5.5a}, and evidence how crucial the
numerical treatment of the pressure part in the noise is to obtain an optimally
convergent mixed method for (\ref{eq1.1}).
Our computations are done using the software packages FreeFem++ \cite{H08} 
and Matlab, and the physical domain of all experiments is taken to be $D=(0,1)^2$, 
i.e., $L=1$.

Specifically, we use Algorithm 2 to compute the solution of the following initial-(Dirichlet) 
boundary value problem:
\begin{subequations} 
	\begin{alignat}{2} \label{e6.1a}
	d{\bf u} &=\bigl[ \Delta {\bf u} -\nabla p + \vf \bigr] dt +{\bf B}({\bf u}) dW(t)  &&\qquad\mbox{ in}\, D_T:=(0,T)\times D,\\
	\div {\bf u} &=0 &&\qquad\mbox{ in}\, D_T,\label{e6.1b}\\
		{\bf u} &=0 &&\qquad\mbox{ on}\, \partial D_T:=(0,T)\times \partial D, \label{e6.1c}\\
	{\bf u}(0)&= {\bf u}_0 &&\qquad\mbox{ in}\, D,\label{e6.1d}
	\end{alignat}
\end{subequations}
and use Algorithm 4 to compute the solution of the pressure stabilization of the above system 
which is obtained by replacing \eqref{e6.1b} by \eqref{e6.2a}--\eqref{e6.2b} below. 
\begin{subequations}\label{e6.2}
\begin{alignat}{2}\label{e6.2a}
{\rm div}\, {\bf u} - \varepsilon \Delta p  &= 0 &&\qquad \mbox{in } D_T,  \\
\partial_{\bf n} p  &= 0 &&\qquad \mbox{on } \partial D_T, \label{e6.2b}
\end{alignat}
\end{subequations}
where $\partial_{\bf n} p$ stands for the normal derivative of $p$.

\textbf{Test 1.}  Let ${\bf u}_0=(0,0), \vf = (1,1)^\top$ and  ${\bf B}(u_1,u_2) = \bigl((u_1^2+1)^{\frac12}, (u_2^2+1)^{\frac12} \bigr)$, which is non-solenoidal. 
We choose $W$ in \eqref{eq1.1} to be a ${\mathbb R}^J$-valued Wiener process, with  increment
\begin{align}\label{used_noise}
W^J(t_{n+1},{\bf x}) - W^J(t_n,{\bf x}) = k \sum\limits_{j=1}^J\sum\limits_{j_1=1}^J \sqrt{\lambda_{j_1j_2}}g_{j_1j_2}({\bf x})\xi_{j_1j_2}^n\,,
\end{align}
where ${\bf x}=(x_1,x_2) \in D, \, \xi_{j_1j_2}^n \sim N(0,1)$, $\lambda_{j_1j_2} = \frac{1}{j_1^2 
	+ j_2^2}$, and
\begin{align}
g_{j_1j_2}({\bf x}) = 2\sin(j_1\pi x_1)\sin(j_2 \pi x_2)\, .
\end{align}
We use the following parameters: $J = 4$ and $T = 1$, and  
take $N_p = 501$ to be the number of realizations in this test. 
 
Let $k_0$ and $k$ denote the fine and regular time step sizes which are used to generate 
the numerical true solution and a computed solution, clearly $k_0<<k$. Moreover, 
$({\bf u}_h^n(\tau), p_h^n(\tau))$ denote the numerical solution at the time step $t_n$ 
using the time step size $\tau$; below, $\tau=k_0$ or $k$. 
For any $1\leq n \leq N$, we use the following numerical integration formulas:
\begin{align*}
&\pmb{\mathcal{E}}_{\bu,0}^n :=\Bigl(\mE\Bigl[\|{\bf u}(t_n) -{\bf u}_h^n(k)\|^2 \Bigr]\Bigr)^{\frac12} \approx
\Bigl(\dfrac{1}{N_p}\sum_{\ell=1}^{N_p}\|{\bf u }_h^n(k_0,\omega_\ell)-{\bf u}_h^n(k,\omega_\ell)\|^2 \Bigr)^{\frac12}\, ,\\
&\pmb{\mathcal{E}}_{\bu,1}^n  :=\Bigl(\mE\Bigl[\|\nabla ( {\bf u}(t_n)  -{\bf u}_h^n(k))\|^2 \Bigr]\Bigr)^{\frac12} \approx
\Bigl(\dfrac{1}{N_p}\sum_{\ell=1}^{N_p}\|\nabla ({\bf u }_h^n(k_0,\omega_\ell)-{\bf u}_h^n(k,\omega_\ell))\|^2 \Bigr)^{\frac12}\, , \\
&{\mathcal{E}}_{p,av}^N :=\Bigl(\mE\Bigl[\Big\|\int_0^T p(s)\, ds -k\sum_{n=1}^{\frac{T}{k}}p_{h}^n(k)\|^2 \Bigr]\Bigr)^{\frac12}\\
&\hskip 0.35in \approx
\Bigl(\dfrac{1}{N_p}\sum_{\ell=1}^{N_p}\Big\|k_0\sum_{n=1}^{\frac{T}{k_0}} p_{h}^n(k_0,\omega_\ell) -k\sum_{n=1}^{\frac{T}{k_i}}p_h^n(k,\omega_\ell)\Bigr\|^2 \Bigr)^{\frac12}\, ,
\end{align*}
and 
\begin{align*}
{\mathcal{E}}_{p,0}^n  :=\Bigl(\mE\Bigl[\|p(t_n)-p_h^n(k)\|^2\Bigr]\Bigr)^{\frac12} \approx
\Bigl(\dfrac{1}{N_p}\sum_{\ell=1}^{N_p}\|p_h^n(k_0,\omega_\ell)-p_h^n(k,\omega_\ell)\|^2\Bigr)^{\frac12}\, .
\end{align*}
The definitions of ${\mathcal{E}}_{r,av}^N$ and ${\mathcal{E}}_{r,0}^N$ are similar.  

We then implement Algorithm 2 and verify the convergence orders of the 
time and spatial discretizations proved in Theorem \ref{thm4.5}.   

To generate a numerical exact solution for computing the orders of convergence, we use $k_0=\frac{1}{600}$ and 
$h_0=\frac{1}{100}$ as fine mesh sizes to compute such a solution. Then, to compute the convergence order of the time discretization for the velocity, we fix $h=\frac{1}{100}$ and then compute the numerical solution with following time mesh sizes: $k=\frac15,\frac{1}{10}, \frac{1}{20}, \frac{1}{40}$. The errors in the $L^2$-norm ($\pmb{\mathcal{E}}_{\bu,0}^n$) and $H^1$-norm 
($\pmb{\mathcal{E}}_{\bu,1}^n$)
 are shown in Table \ref{table1}. The numerical results verify the convergence order $O(k^{\frac12})$ which is
 stated in Theorem \ref{thm4.5}.

\begin{table}[tbhp]
		\begin{center}
			\begin{tabular}{ |c|c|c|c|c|}
				\hline
				\bf $k$ & $\pmb{\mathcal{E}}_{\bu,0}^n$  & \mbox{order} & $\pmb{\mathcal{E}}_{\bu,1}^n$ &  \mbox{order}\\
				\hline 
				$1/5$  & 0.16253 &  & 0.25558&\\
				\hline
				$1/10$  & 0.11521 & 0.496 & 0.18050&0.5018\\
				\hline
				$1/20$  & 0.08145 & 0.5002 & 0.12580&0.5209\\
				\hline
				$1/40$  & 0.05730 & 0.5073 & 0.08758&0.5225\\
				\hline
			\end{tabular}
		\smallskip
			\caption{Algorithm 2: Time discretization errors for the velocity $\{ {\bf u}^n_h\}_n$}.
			\label{table1}
		\end{center}
\end{table}

Tables \ref{table2} and \ref{table22} display respectively  the $L^2$-norm errors $({\mathcal{E}}_{\alpha,av}^N)$ 
and $({\mathcal{E}}_{\alpha,0}^N)$ ($\alpha=r$ and $p$)  of the time-averaged pressure approximations using time mesh sizes: 
$k=\frac15,\frac{1}{10}, \frac{1}{20}, \frac{1}{40}$. The numerical results
indicate the convergence rate $O(k^{\frac12})$ which was predicted in Theorem \ref{thm4.5}. We also present the 
standard $L^2$-norm errors ${\mathcal{E}}_{r,0}^N $ and ${\mathcal{E}}_{p,0}^N$ in 
Table \ref{table2} and \ref{table22} respectively for comparison purposes, for which we observe a 
significantly slower rate. It should be noted that our convergence theory does not 
conclude such a convergence behavior.

\begin{table}[tbhp]
		\begin{center}
			\begin{tabular}{ |c|c|c|c|c|}
				\hline
				\bf $k$ & ${\mathcal{E}}_{r,av}^N$  & \mbox{order} & ${\mathcal{E}}_{r,0}^N $ & \mbox{order}\\
				\hline 
				$1/5$  & 0.06352 &  & 0.08013&\\
				\hline
				$1/10$  & 0.04486 & 0.5019 & 0.06231&0.3629\\
				\hline
				$1/20$  & 0.03161 & 0.5049 & 0.04842&0.3639\\
				\hline
				$1/40$  & 0.02219 & 0.5102 & 0.03734&0.3745\\
				\hline
			\end{tabular}
		\smallskip 
			\caption{Algorithm 2: Time discretization errors for the   pressure  $\{r^n_h\}_n$.}
			\label{table2}
		\end{center}
\end{table}

\begin{table}[tbhp]
		\begin{center}
			\begin{tabular}{ |c|c|c|c|c|}
				\hline
			 \bf $k$ & ${\mathcal{E}}_{p,av}^N$  & \mbox{order} & ${\mathcal{E}}_{p,0}^N $ & \mbox{order}\\
				\hline 
				$1/5$  & 0.00217 &  & 0.0967&\\
				\hline
				$1/10$  & 0.00154 & 0.4947 & 0.0722&0.3211\\
				\hline
				$1/20$  & 0.00109 & 0.4986 & 0.0579&0.3184\\
				\hline
				$1/40$  & 0.00077 & 0.5014 & 0.0461&0.3288\\
				\hline
			\end{tabular}
			\smallskip 
			\caption{Algorithm 2: Time discretization errors for the pressure approximation $\{p^n_h\}_n$.}
			\label{table22}
		\end{center}
\end{table}

To verify the convergence rate $O(h)$ for the velocity approximation, we fix $k = \frac{1}{200}$ and use different 
spatial mesh sizes $h = \frac15,\frac{1}{10}, \frac{1}{20}, \frac{1}{40}$ to compute the errors 
$\pmb{\mathcal{E}}_{\bu,0}^n$  and $\pmb{\mathcal{E}}_{\bu,1}^n$.
Table \ref{table3} contains the computational results which verify first order convergence rate for  
both  as stated in Theorem \ref{thm4.5}.  

\begin{table}[tbhp]
		\begin{center}
			\begin{tabular}{ |c|c|c|c|c|}
				\hline
		 		\bf $h$ & $\pmb{\mathcal{E}}_{\bu,0}^n$  & \mbox{order} & $\pmb{\mathcal{E}}_{\bu,1}^n$ &  \mbox{order}\\
				\hline 
				$1/5$  & 0.07981 &  & 0.50832&\\
				\hline
				$1/10$  & 0.04034 & 0.9844 & 0.25315&1.0057\\
				\hline
				$1/20$  & 0.02016 & 1.0007 & 0.12662&0.9995\\
				\hline
				$1/40$  & 0.01007 & 1.0014 & 0.06322&1.0021\\
				\hline
			\end{tabular}
			\caption{Algorithm 2: Spatial discretization errors for the velocity approximation $\{ {\bf u}^n_h\}_n$.}
			\label{table3}
		\end{center}
\end{table}

To verify the convergence rate for the pressure approximation, we fix $k=\frac{1}{200}$ and use different spatial 
 mesh sizes: $h = \frac15,\frac{1}{10}, \frac{1}{20}, \frac{1}{40}$. Tables \ref{table4} and  \ref{table44} display the error ${\mathcal{E}}_{p,av}^N$  of 
  the pressure approximation. It is evident that ${\mathcal{E}}_{p,av}^N$ converges linearly in $h$ as stated in Theorem 
   \ref{thm4.5}.  For comparison purposes, we also compute the error ${\mathcal{E}}_{p,0}^N $  and include it in Table  \ref{table4} and \ref{table44}.  
The numerical results suggest that the error ${\mathcal{E}}_{p,0}^N $  converges with a slower rate.

\begin{table}[tbhp]
		\begin{center}
			\begin{tabular}{ |c|c|c|c|c|}
				\hline
			 	\bf $h$ & ${\mathcal{E}}_{p,av}^N$  & \mbox{order} & ${\mathcal{E}}_{p,0}^N $ & \mbox{order}\\
				\hline 
				$1/5$  & 0.04289 &  & 0.30972&\\
				\hline
				$1/10$  & 0.02145 & 0.9997 & 0.23572&0.3939\\
				\hline
				$1/20$  & 0.01071 & 1.0022 & 0.17977&0.3901\\
				\hline
				$1/40$  & 0.00534 & 1.0038 & 0.13620&0.3972\\
				\hline
			\end{tabular}
			\caption{Algorithm 2: Spatial discretization errors for the pressure approximation $\{r^n_h\}_n$.}
			\label{table4}
		\end{center}
\end{table}

\begin{table}[tbhp]
		\begin{center}
			\begin{tabular}{ |c|c|c|c|c|}
				\hline
				\bf $h$ & ${\mathcal{E}}_{p,av}^N$  & \mbox{order} & ${\mathcal{E}}_{p,0}^N $ & \mbox{order}\\
				\hline 
				$1/5$  & 0.127620 &  & 0.44524&\\
				\hline
				$1/10$  & 0.068161 & 0.9048 & 0.36504&0.2865\\
				\hline
				$1/20$  & 0.036068 & 0.9182 & 0.29707&0.2973\\
				\hline
				$1/40$  & 0.019262 & 0.9049 & 0.24189&0.2965\\
				\hline
			\end{tabular}
			\caption{Algorithm 2: Spatial discretization errors for the pressure $\{p^n_h\}_n$.}
			\label{table44}
		\end{center}
\end{table}

\medskip
\textbf{Test 2.}  In this test, we  use Algorithm 2 to solve the 
driven cavity problem with stochastic forcing, which is described by system \eqref{eq1.1a}--\eqref{eq1.1b} with the 
following non-homogeneous boundary condition: 
\[
u(x_1, x_2) = \begin{cases}
(1,0),  &\quad \mbox{$x_2=1, \, 0<x_1<1$},   \\
0,  &\quad\mbox{otherwise}.
\end{cases} 
\]
Let $W$ be the  Wiener process as in (\ref{used_noise}), and ${\bf B} \equiv (1,1)^\top$, {\em i.e.}, the 
noise is additive.  We use the following parameters in the test: $T =1$,  $h=\frac{1}{20}$, $k=0.01$, and the number of the realizations is $N_p = 1001$. 

Figure \ref{fig5} displays the expected values of the computed stochastic velocity ${\bf u}^N_h$ 
and pressure $p^N_h$; expectedly, they behave similarly as their deterministic counterparts do.  
On the other hand, individual realizations of the computed stochastic velocity ${\bf u}^N_h$ 
and pressure $p^N_h$ given in Figures \ref{fig6}--\ref{fig8} show quite different behaviors from their deterministic counterparts.

\begin{figure}[thb]
	\begin{center}
		\includegraphics[scale=0.2]{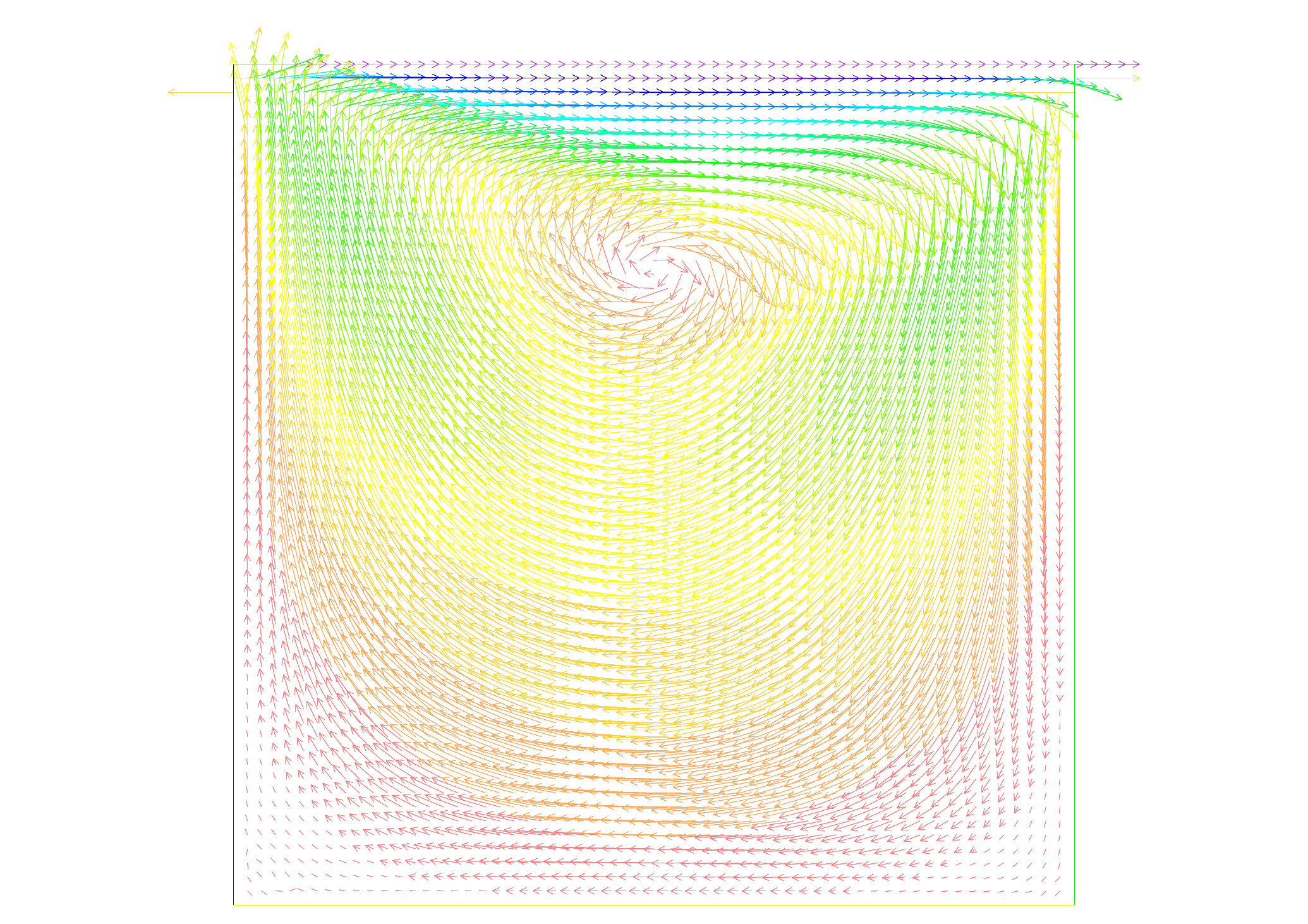}
		\includegraphics[scale=0.2]{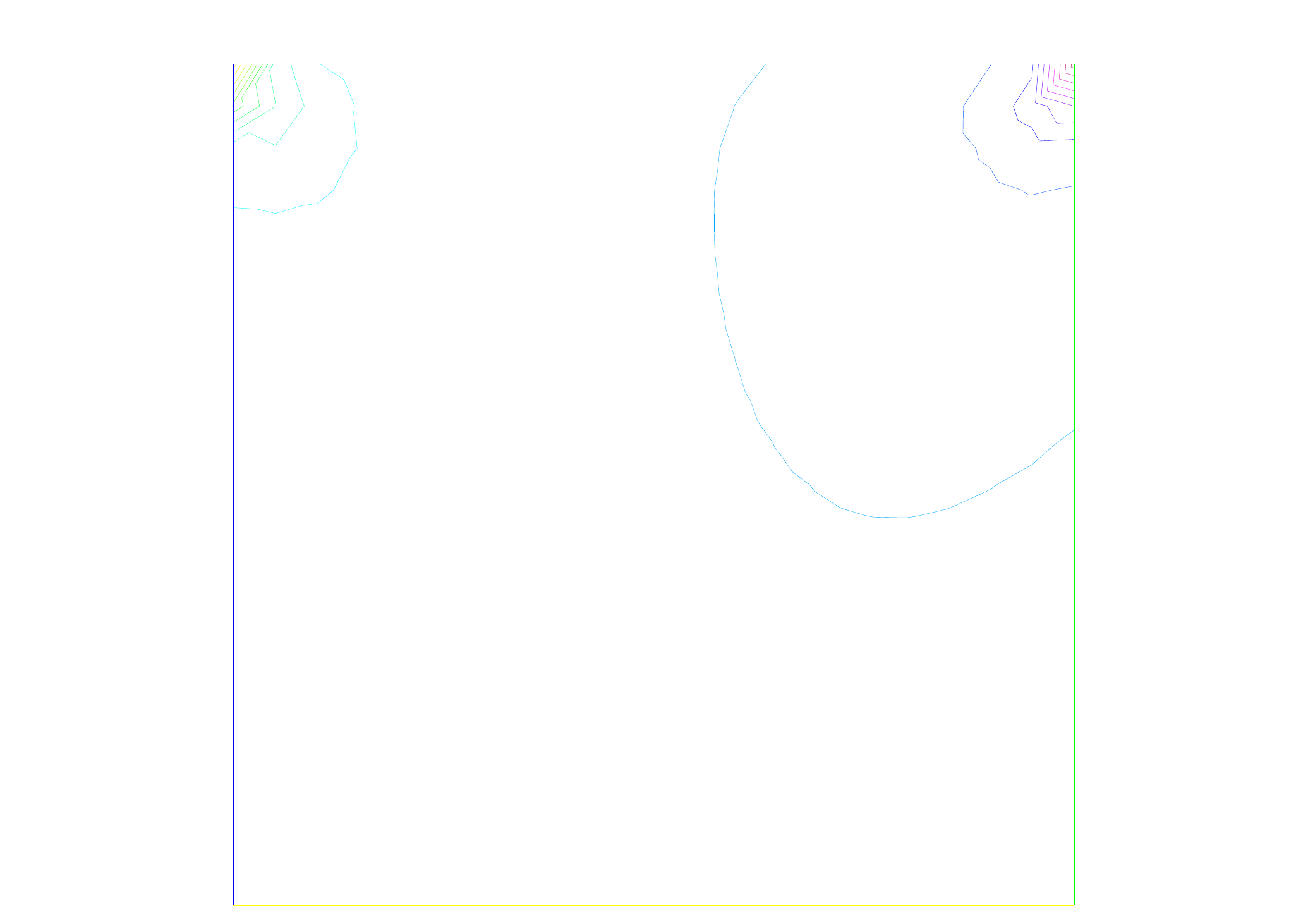}
		\includegraphics[scale=0.2]{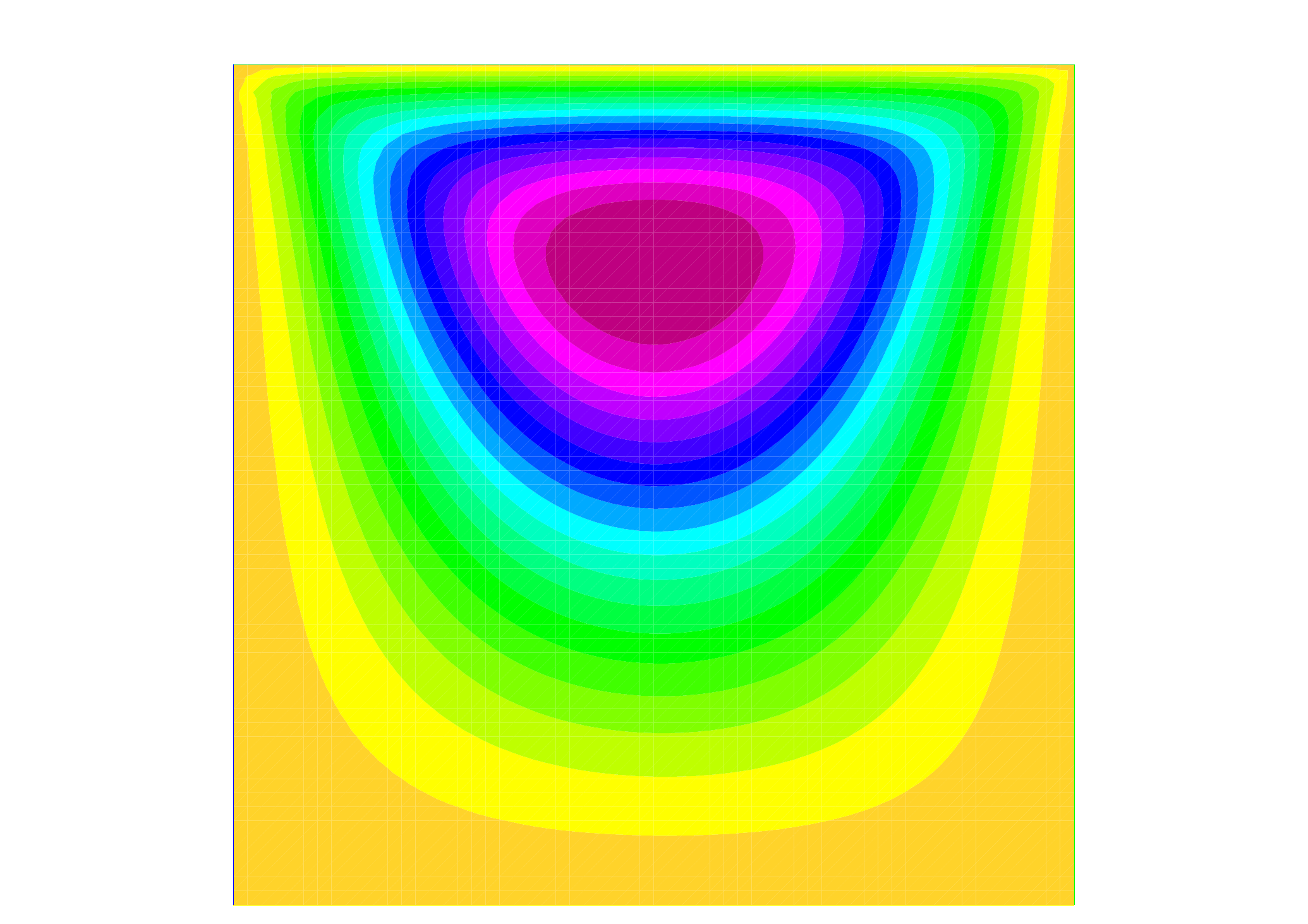}
		\caption{Test 2: (a) The expected value of $\{{\bf u}^N_h\}_n$. (b) Level-lines of the expected value of $\{p^N_h\}_n$. (c) The streamlines of the expected value of $\{{\bf u}^N_h\}_n$.}
		\label{fig5}
	\end{center}
\end{figure}

\begin{figure}[thb]
	\begin{center}
		\includegraphics[scale=0.2]{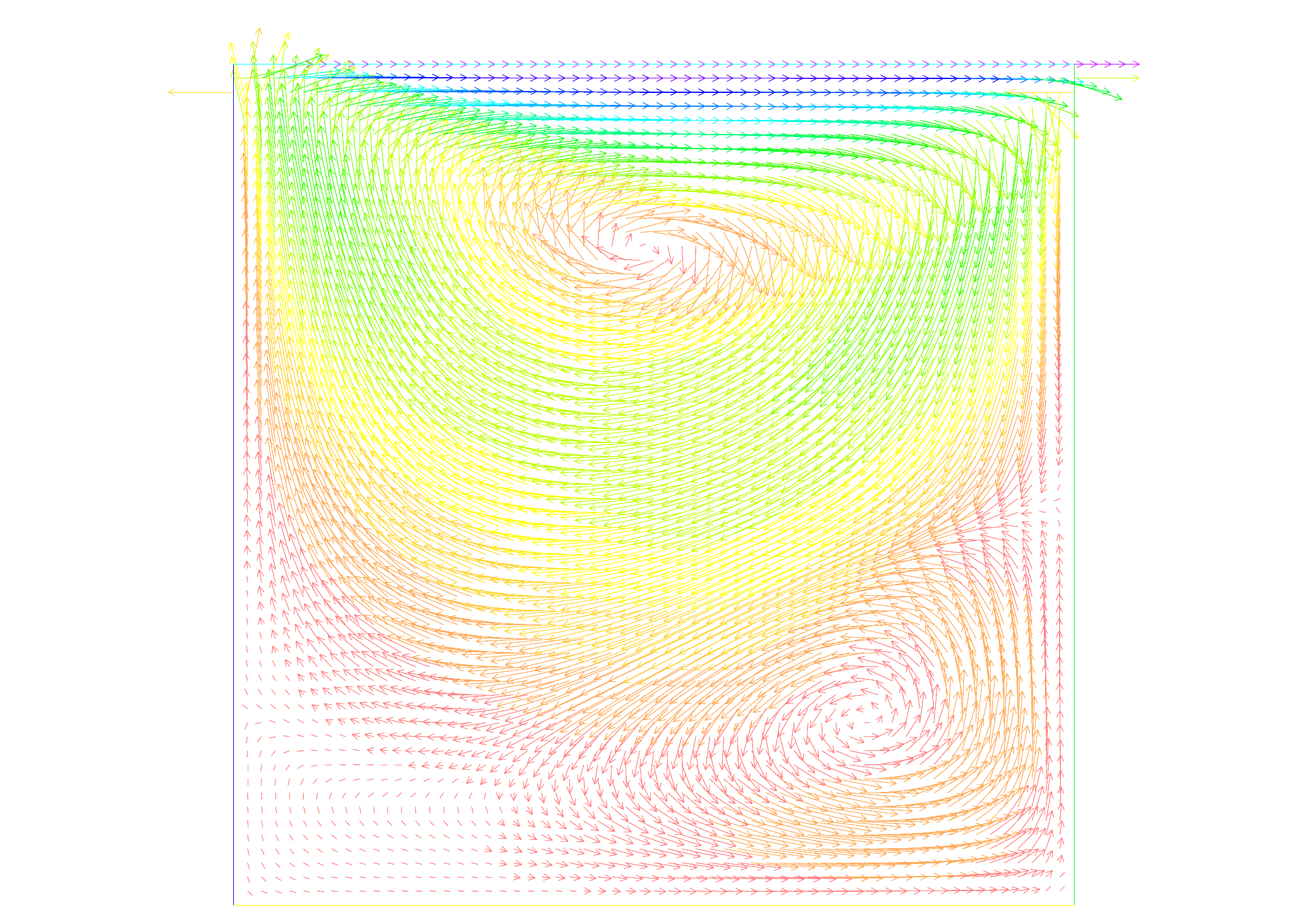}
		\includegraphics[scale=0.2]{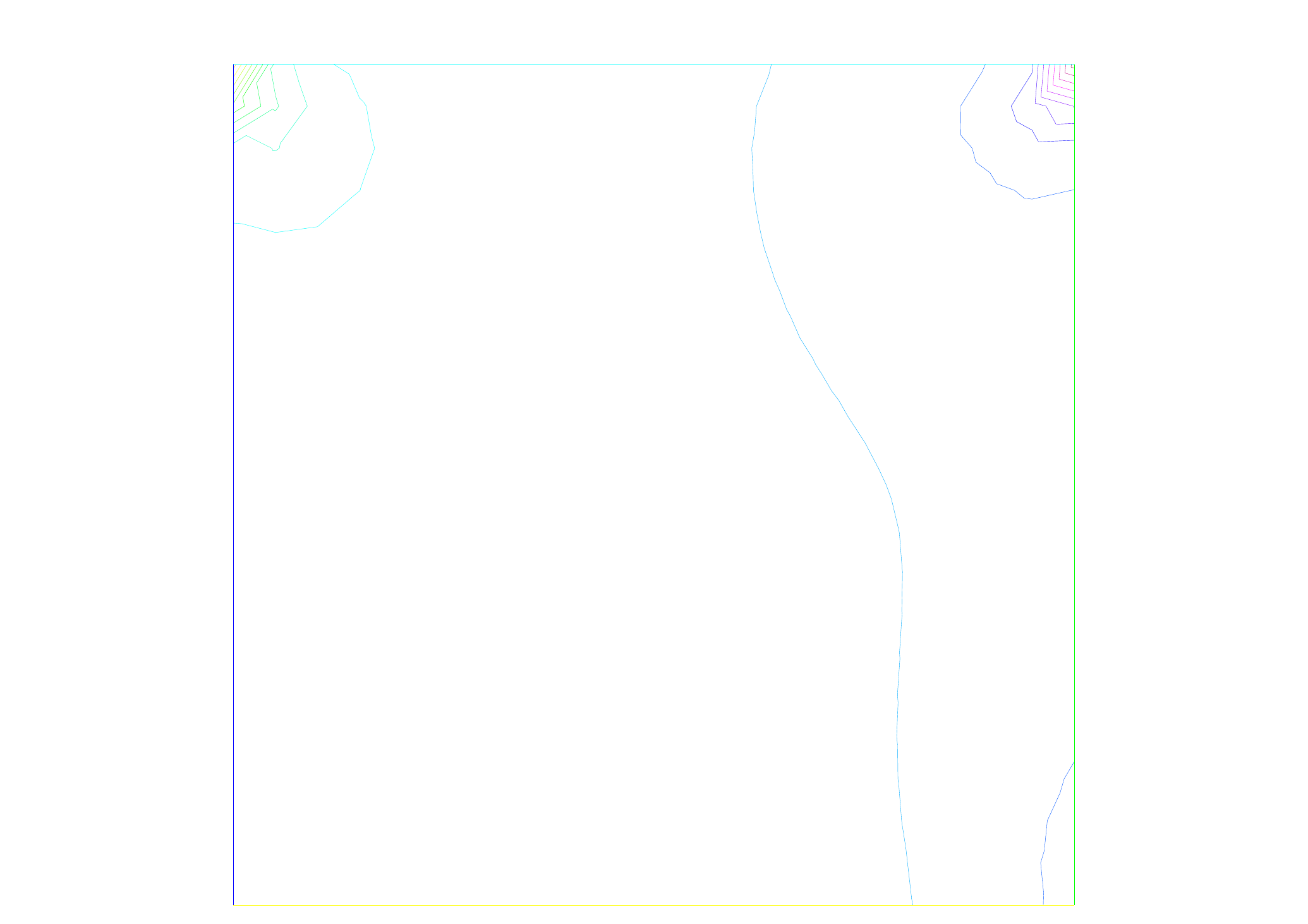}
		\includegraphics[scale=0.2]{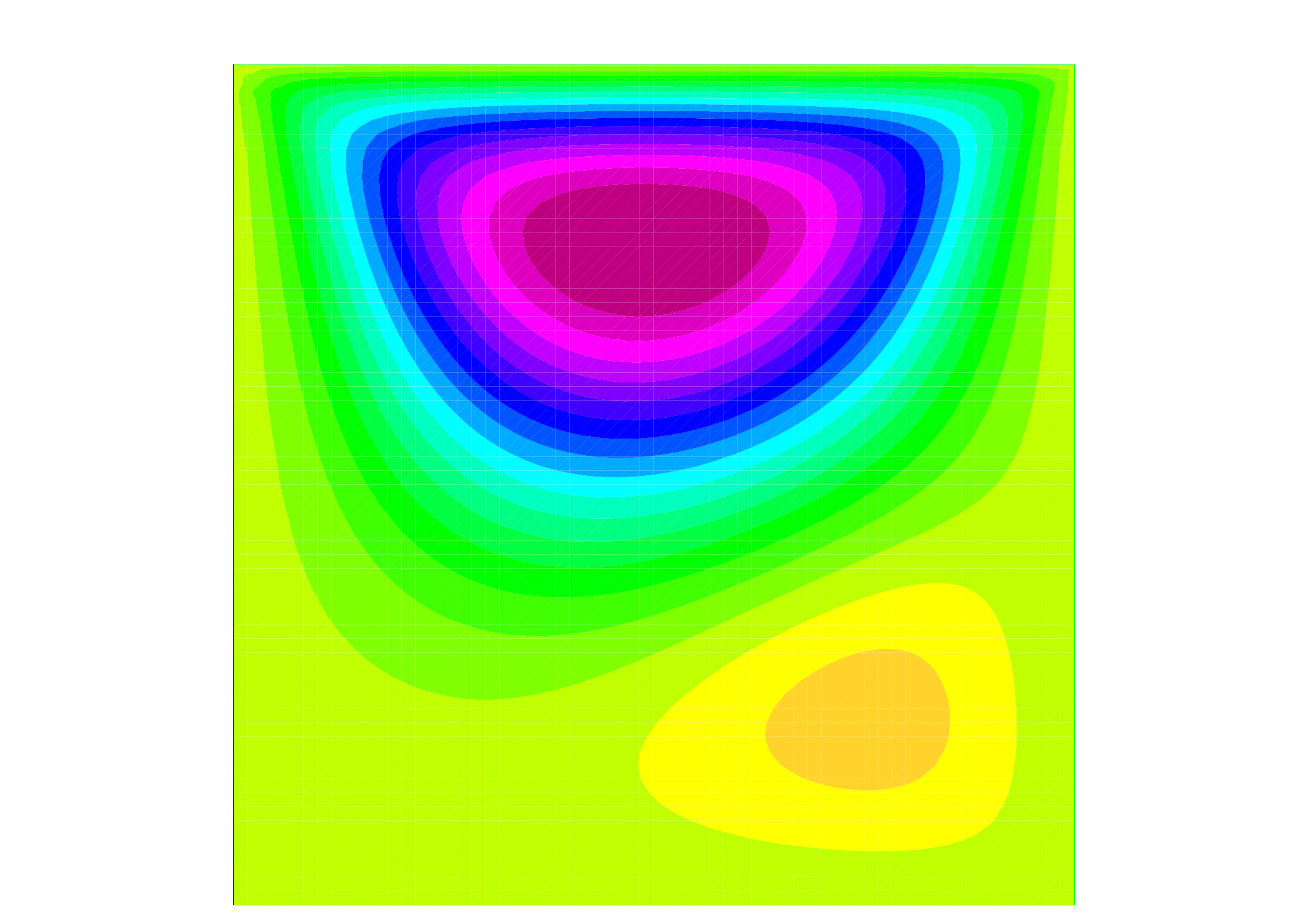}
		\caption{First realization of (a) the velocity $\{{\bf u}^N_h\}_n$;  (b) the pressure $\{p^N_h\}_n$; (c) the streamline of $\{{\bf u}^N_h\}_n$.}
		\label{fig6}
	\end{center}
\end{figure}

\begin{figure}[thb]
	\begin{center}
		\includegraphics[scale=0.2]{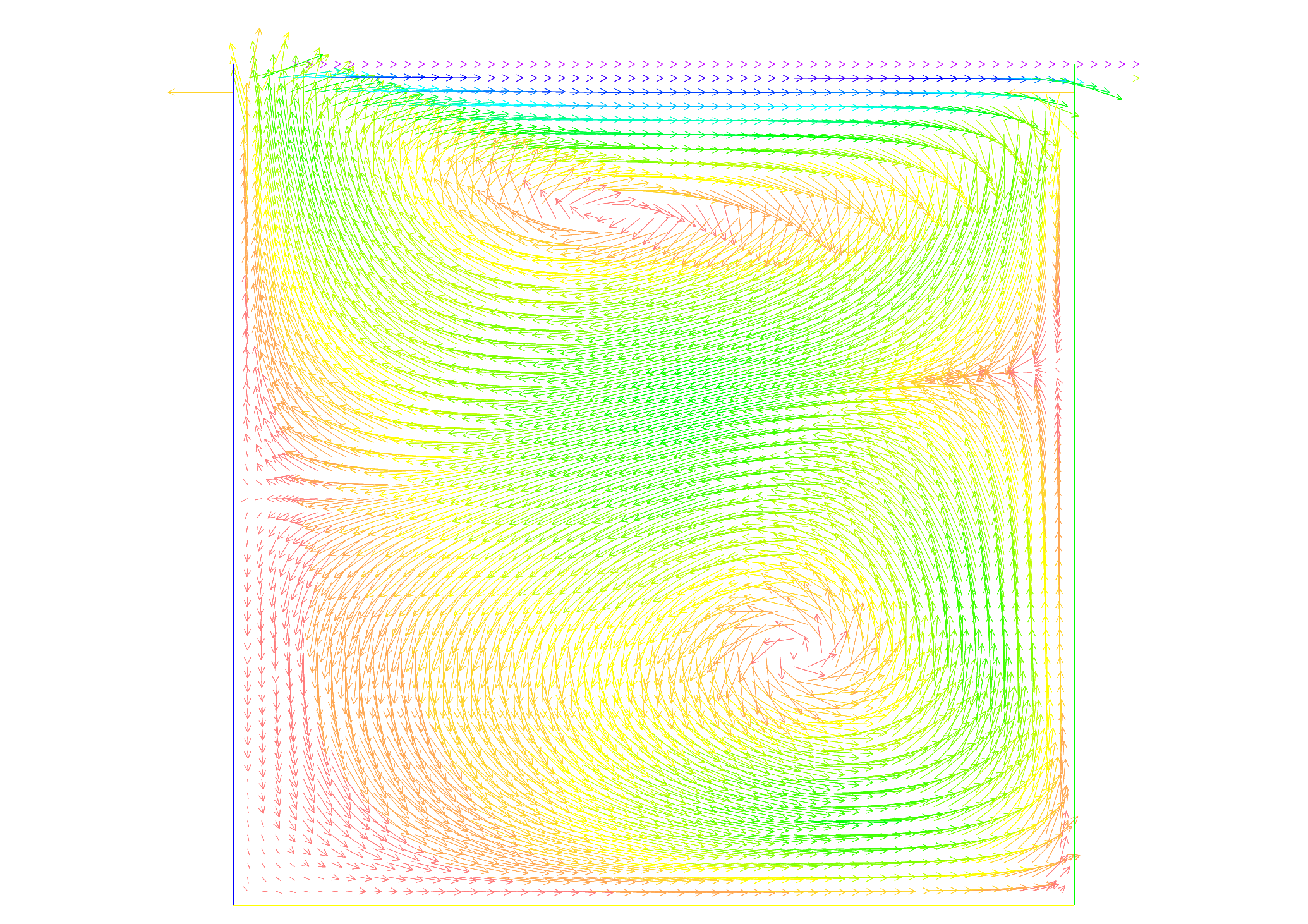}
		\includegraphics[scale=0.2]{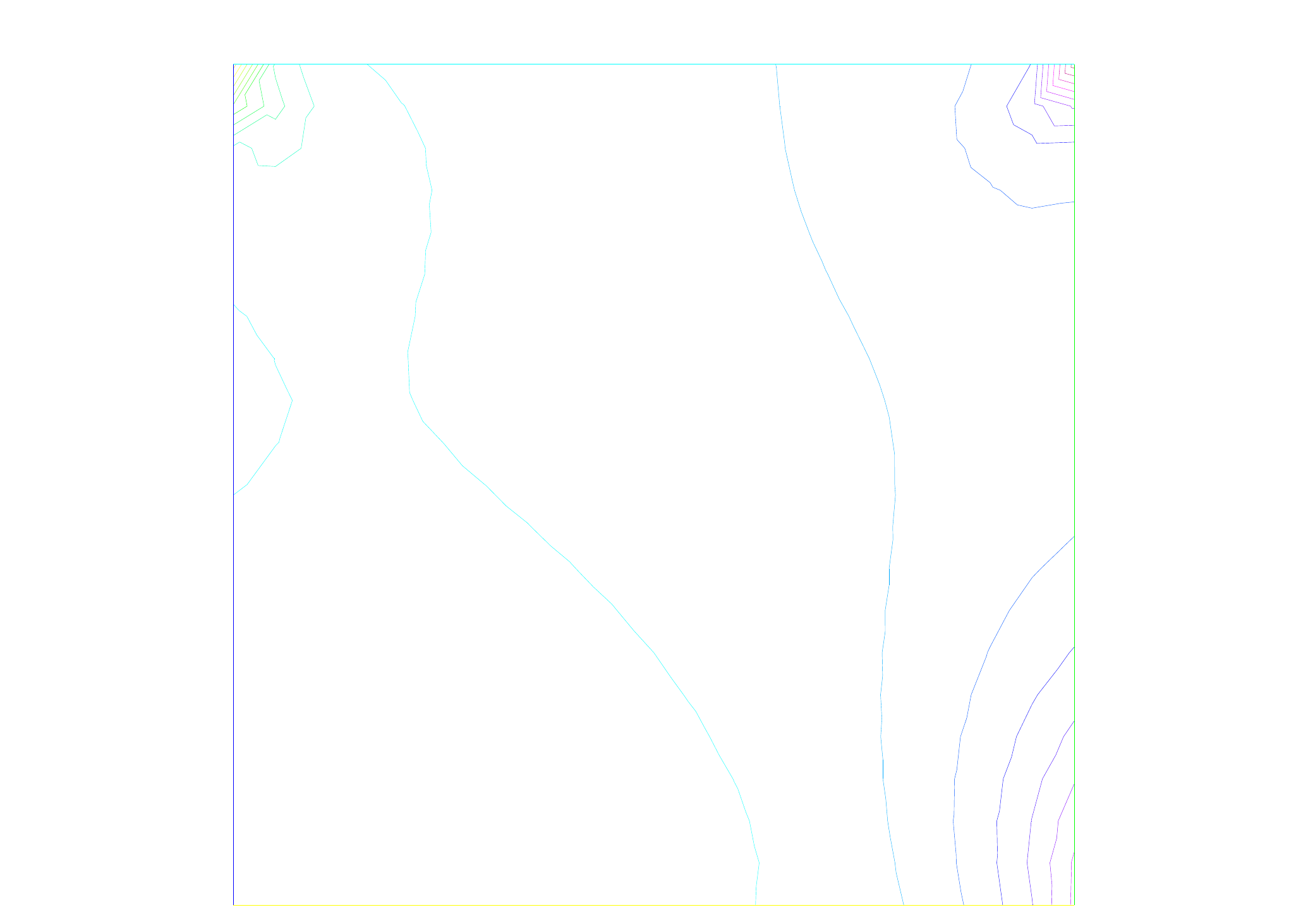}
		\includegraphics[scale=0.2]{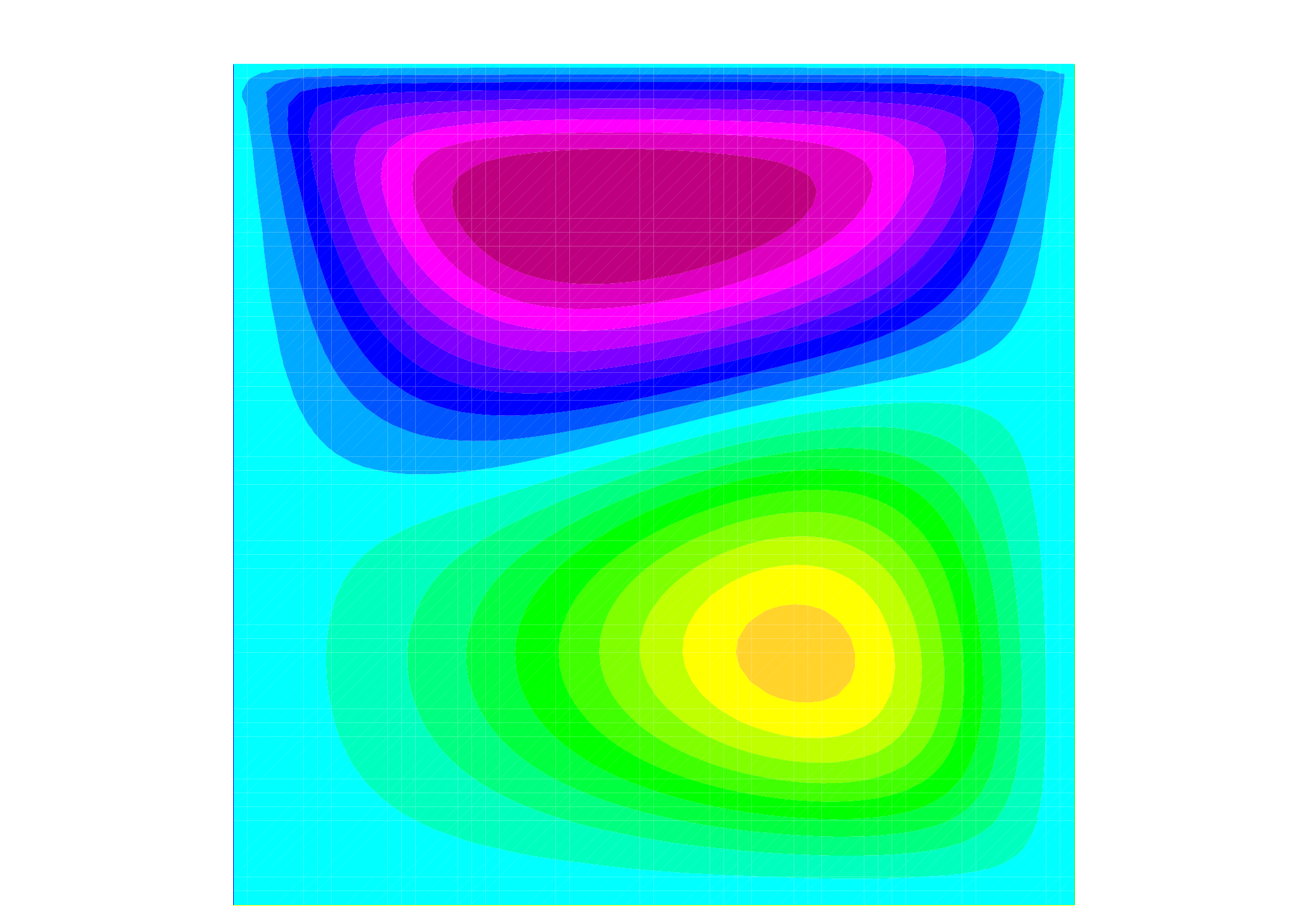}
		\caption{Second realization of (a) the velocity $\{{\bf u}^N_h\}_n$; (b) the pressure $\{p^N_h\}_n$; (c) the streamline of $\{{\bf u}^N_h\}_n$.}
		\label{fig7}
	\end{center}
\end{figure}

\begin{figure}[thb]
	\begin{center}
		\includegraphics[scale=0.2]{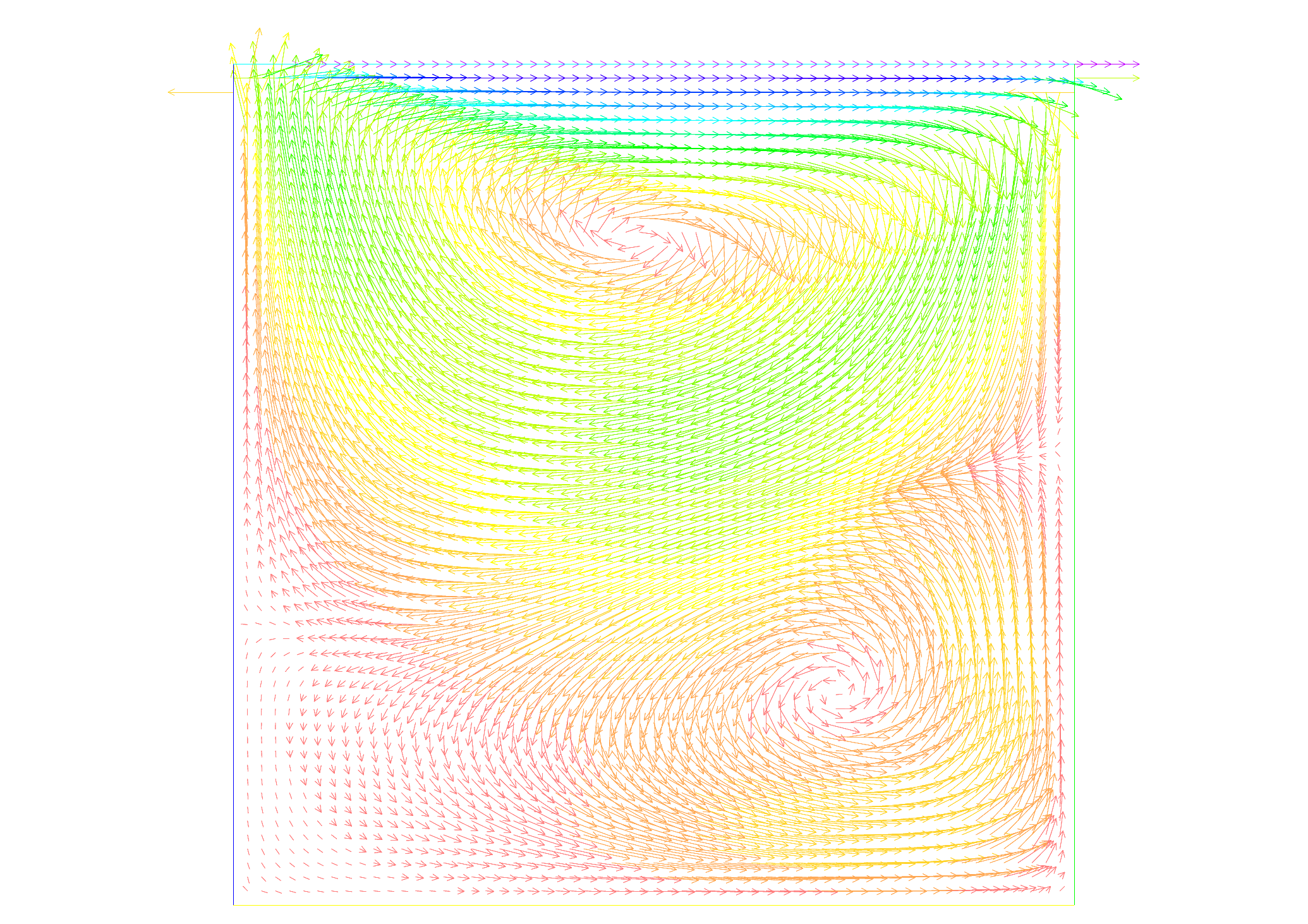}
		\includegraphics[scale=0.2]{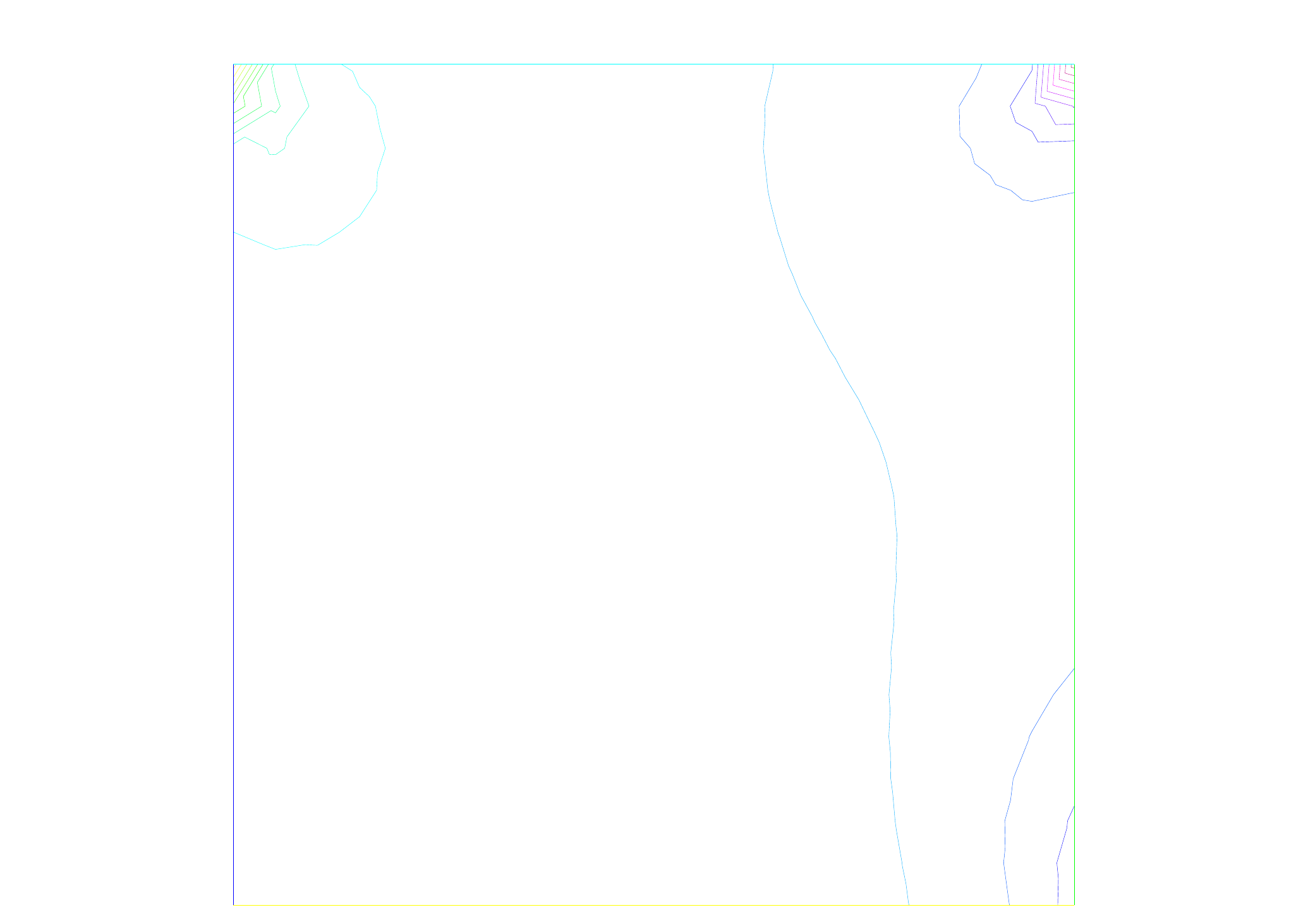}
		\includegraphics[scale=0.2]{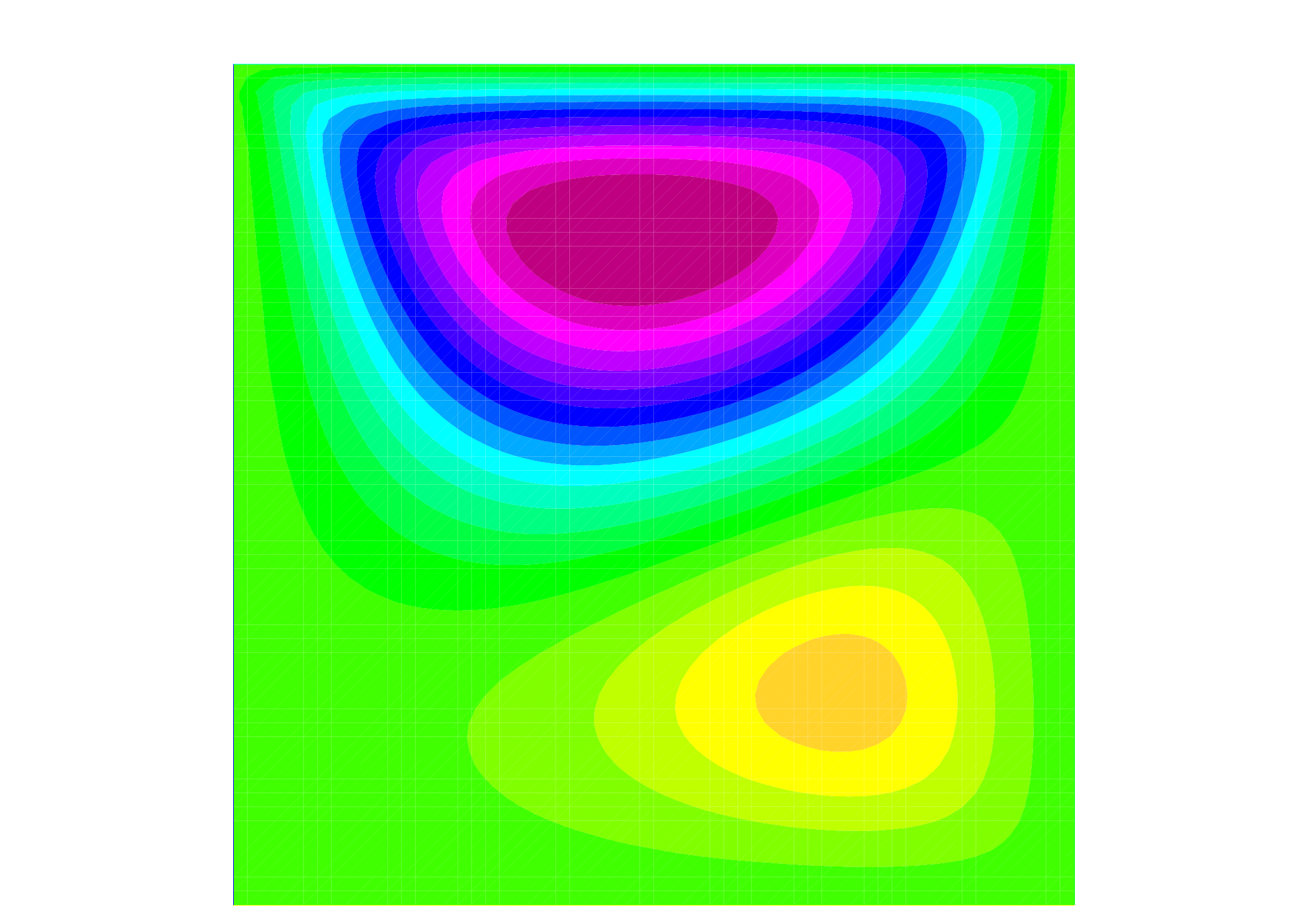}
		\caption{Third realization of (a) the velocity $\{{\bf u}^N_h\}_n$; (b) the pressure $\{p^N_h\}_n$; (c) the streamline of $\{{\bf u}^N_h\}_n$.}
		\label{fig8}
	\end{center}	
\end{figure}

\medskip
{
	\textbf{Test 3. } In this test, we study the stabilization method 
	in section \ref{sec-5}. Specifically, we implement Algorithm 4 
	with the same function ${\bf B}$ as in Test 1, and $\{W(t); 0\leq t\leq T\}$ is chosen as an $\mathbb{R}$-valued Wiener process.  
	We also add a constant forcing term ${\bf f} \equiv (1,1)^\top$ to \eqref{eq1.1a} in order to
	construct an exact solution to system \eqref{eq1.1}. We also take 
	${\bf u}_0 = (0,0)$, $T = 1$, 
	the number of realizations $N_p = 800$, and the minimum time step $k_0 = \frac{1}{4096}$. 
	The computations are done on a uniform mesh of $D$ with the mesh size $h = \frac{1}{100}$.
	 
	In order to verify the optimal convergence rate $O(h)$ of Theorem \ref{error-thm-h}, we 
	fix $k = \frac{1}{256}$ and $\varepsilon=h^2$, and then compute the numerical solutions for different 
	values of $h$. 
	The standard $L^2$-errors  $\pmb{\mathcal{E}}_{\bu,0}^N$ and ${\mathcal{E}}_{p,0}^N $ for the velocity and pressure approximations 
	are presented in Table \ref{table_stable_helm}. The numerical results verify the first 
	order convergence rate for the spatial approximation of the velocity as stated in  
	Theorem \ref{error-thm-h}.  

	\begin{table}[htbp]
			\begin{center}
				\begin{tabular}{ |c|c|c|c|c|}
					\hline
					\bf $h$ &  $\pmb{\mathcal{E}}_{\bu,0}^N$  & order & ${\mathcal{E}}_{p,0}^N $ & order\\
					\hline 
					$1/5$  &  0.018392 &  & 0.147406&\\
					\hline
					$1/10$  & 0.009083 & 1.0178 & 0.092913&0.6658\\
					\hline
					$1/20$  & 0.004095 & 1.1493 & 0.052611&0.8205\\
					\hline
					$1/40$  & 0.002279 & 0.8454 & 0.044723&0.2344\\
					\hline
				\end{tabular}
				\caption{Algorithm 4: Spatial discretization errors for the velocity $\{{\bf u}^n_{\varepsilon,h}\}_n$ and pressure $\{p^n_{\varepsilon,h}\}_n$.}
				\label{table_stable_helm}
			\end{center}
	\end{table}

	For comparison purposes, we also implement the `standard' stabilization method, which is based on 
	\eqref{stabilize} instead of \eqref{eq_new_intro_ref_2a}--\eqref{eq_new_intro_ref_2b}, 
	with the same noise and parameters as above. Table \ref{table_stable_general} displays the 
	$L^2$-errors $\pmb{\mathcal{E}}_{\bu,0}^N$  and ${\mathcal{E}}_{p,0}^N $  of the velocity and pressure approximations. The numerical 
	results indicate that the velocity approximation is also convergent but at a slower rate. This 
	confirms the advantages of the proposed {\em Helmholtz decomposition enhanced} stabilization method 
	(Algorithm 4) over the `standard' stabilization method.
}
	
	\begin{table}[tbhp]
			\begin{center}
				\begin{tabular}{ |c|c|c|c|c|}
					\hline
					\bf $h$ & $\pmb{\mathcal{E}}_{\bu,0}^N$ & order & ${\mathcal{E}}_{p,0}^N $ & order\\
					\hline 
					$1/5$  & 0.037658 &  & 0.735843&\\
					\hline
					$1/10$  & 0.025586 & 0.5576 & 0.888352&-0.2717\\
					\hline
					$1/20$  & 0.019342 & 0.4036 & 0.579818&0.6155\\
					\hline
					$1/40$  & 0.011412 & 0.7611 & 0.442691&0.3893\\
					\hline
				\end{tabular}
				\caption{Standard stabilization method: Spatial discretization errors for the velocity $\{{\bf u}^n_{\varepsilon,h}\}_n$ and pressure $\{p^n_{\varepsilon,h}\}_n$.}
				\label{table_stable_general}
			\end{center}
	\end{table}

\medskip
{\bf Acknowledgment.} After this paper was finished, we were 
brought to attention of the reference \cite{Breit} by Professor D. Breit.  We would like 
to thank him for pointing out the reference and for his explanation, and remark that 
the finite element method proposed in \cite{Breit} is essentially equivalent to 
Algorithm 2 of this paper although they are different algorithmically.


\end{document}